\theoremstyle{plain}
\newtheorem{theorem}{Theorem}[section]
\newtheorem{lemma}[theorem]{Lemma}
\newtheorem{corollary}[theorem]{Corollary}
\theoremstyle{definition}
\newtheorem{definition}[theorem]{Definition}
\theoremstyle{remark}
\newtheorem{remark}[theorem]{Remark}
\newtheorem{claim}[theorem]{Claim}
\def\sgn{\text{sgn }}
\def\p{\partial}
\def\be{\begin{equation}}
\def\ee{\end{equation}}
\def\bes{\begin{equation*}}
\def\ees{\end{equation*}}
\def\bali{\begin{aligned}}
	\def\eali{\end{aligned}}
\renewcommand{\epsilon}{\varepsilon}
\numberwithin{equation}{section}
\def\dashint{\operatorname%
	{\,\,\text{\bf--}\kern-.98em\DOTSI\intop\ilimits@\!\!}}
\begin{document}

	\title[Time analyticity for nonlocal parabolic equations]{Time analyticity for nonlocal parabolic equations}

\author[H. Hong]{Hongjie Dong}
\address[H. Dong]{Division of Applied Mathematics, Brown University, Providence, RI 02912, USA}

\email{hongjie\_dong@brown.edu}

\author[C. Zeng]{Chulan Zeng}
\address[C. Zeng]{Department of Mathematics, University of California, Riverside, CA, 92521, USA}

\email{czeng011@ucr.edu}

\author[Q. Zhang]{Qi S. Zhang}
\address[Q. Zhang]{Department of Mathematics, University of California, Riverside, CA 92521, USA}

\email{qizhang@math.ucr.edu}
\thanks{}

\subjclass[2010]{}

\keywords{Nonlocal parabolic equations, Fractional heat equations, 
time analyticity, 
heat kernel estimates, backward fractional heat equations.}

\begin{abstract}
In this paper, we investigate pointwise time analyticity of solutions to nonlocal parabolic equations in the settings of $\mathbb{R}^d$ and a complete Riemannian manifold $\mathrm{M}$. On one hand, in $\mathbb{R}^d$, we prove that any solution $u=u(t,x)$ to $u_t(t,x)-\mathrm{L}_\alpha^{\kappa} u(t,x)=0$, where $\mathrm{L}_\alpha^{\kappa}$ is a nonlocal operator of order $\alpha$, is time analytic in $(0,1]$ if $u$ satisfies the growth condition $|u(t,x)|\leq C(1+|x|)^{\alpha-\epsilon}$ for any $(t,x)\in (0,1]\times \mathbb{R}^d$ and $\epsilon\in(0,\alpha)$. We also obtain pointwise estimates for $\p_t^kp_\alpha(t,x;y)$, where $p_\alpha(t,x;y)$ is the fractional heat kernel. Furthermore, under the same growth condition, we show that the mild solution is the unique solution.
On the other hand, in a manifold $\mathrm{M}$, we also prove the time analyticity of the mild solution under the same growth condition and the time analyticity of the fractional heat kernel, when $\mathrm{M}$ satisfies the Poincar\'e inequality and the volume doubling condition. Moreover, we also study the time and space derivatives of the fractional heat kernel in $\mathbb{R}^d$ using the method of Fourier transform and contour integrals. We find that when $\alpha\in (0,1]$, the fractional heat kernel is time analytic at $t=0$ when $x\neq 0$, which  differs from the standard heat kernel.

As corollaries, we obtain  sharp  solvability  condition for the backward nonlocal parabolic equations and time analyticity of some nonlinear nonlocal parabolic equations with power nonlinearity of order $p$.
These results are related to those in \cite{[DZ]} and \cite{[Zeng]} which deal with local equations.

\end{abstract}

\maketitle

\tableofcontents

	\section{Introduction}
	In this paper, we investigate pointwise time analyticity of solutions to nonlocal parabolic equations in the settings of $\mathbb{R}^d$ and a complete Riemannian manifold $\mathrm{M}$ satisfying the standard  Conditions \eqref{cond1} and \eqref{cond2}.  One of our main results reads that the fractional heat kernel on $\mathbb{R}^d$ is time analytic at $t=0$ when $x\neq 0$ and $\alpha\in (0,1]$, which differs from the standard heat kernel.  As an intermediate result, we obtain the uniqueness of solutions to nonlocal parabolic equations in $\mathbb{R}^d$, which improves a result in \cite{[Chen]} in the sense that instead of the bound $C t/\left(t^{{1}/{\alpha}}+|x|\right)^{d+\alpha}$, we only impose the growth condition $|u(t,x)|\leq C(1+|x|)^{\alpha-\epsilon}$ for any $(t,x)\in (0,1]\times \mathbb{R}^d$ and $\epsilon\in(0,\alpha)$.
	In the manifold setting , we obtain  lower and upper bounds for the fractional heat kernel $p_\alpha$, and  prove that $p_\alpha$ is time analytic for any $(t,x)\in (0,\infty)\times \mathrm{M}$.  These results allow us to solve the solvability problem of the backward nonlocal parabolic equations which can be ill-posed.

Before presenting the results in detail, we wish to justify their value by recalling a number of related results in the literature and describing some new applications.
The study of analyticity property of solutions to PDEs 	has been a classical topic. Even though the spatial analyticity is usually
true for generic solutions of the heat equation, the time analyticity is harder to prove and is false in general. For instance, it is not hard to construct a solution of
the heat equation in a space-time cylinder in the Euclidean setting, which is not time analytic in a sequence of moments.
In fact, the time analyticity is not a local property, rather it requires certain boundary or growth conditions on the solutions. There is a vast literature on
 time-analyticity for the heat equation and other parabolic type
equations under various assumptions. See, for example, \cite{[Masuda]}, \cite{[Komatsu]}, \cite{[GI]}, \cite{[LE]}, \cite{[Widder]}, \cite{[DZ]}, \cite{[Zhang2]}, and \cite{[Zeng]} and the citations therein.  One can also consider
solutions in certain $L^p$ spaces with $p\in(1,\infty)$. See \cite{[Pr]} for a large class of dissipative equations in the periodic setting.  We also mention that in \cite{[LE]}, for any bounded domain $\Omega\subset \mathbb{R}^d$ with analytic boundary, the authors proved that any solution of the high order heat equation
	$$
	\left\{\begin{array}{l}
	u_t+(-\Delta)^m u=0, \quad\forall (t,x)\in (0,1]\times \Omega, \\
	u=Du=\cdots=D^{m-1}u=0\ \text{on}\  (0,1]\times \partial\Omega,\ u(0,x)\in L^2(\Omega)
	\end{array}\right.
	$$
	is time analytic in $t\in(0,1]$.

Recently new applications of time analyticity are found in control theory and in the study of backward equations which is essential in stochastic analysis and mathematical finance. A fundamental fact in control theory for heat type equations is that if a state is reachable by the free equation then it is  reachable by suitable control from any reasonable initial value. The former is equivalent to the solvability of the free backward equation from this state. However
	this backward solvability question has been vexing the control theory community for years.  As a matter of fact,
	in a recent paper \cite{[La]}, it was written:"However, it is a quite hard task to decide whether a given state is the value at some
time of a trajectory of the system without control (free evolution). In practice, the only
known examples of such states are the steady states." This problem for the heat equation was solved in \cite{[DZ]} not long ago. More precisely, in the paper \cite{[DZ]} (see also \cite{[Zhang2]}), it was proved that if a smooth solution of the heat equation in $(-2,0]\times\mathrm{M}$ is of exponential growth of order 2, then it is time analytic  in $t\in[-1,0]$.  Here $\mathrm{M}$ is either the Euclidean space or  certain noncompact manifolds.  Also, an explicit condition is found on the solvability of the backward heat equation from a given time, which is equivalent to
the time analyticity of the solution of the heat equation at that time.
 Lately, the time analyticity of solutions to the biharmonic heat equation, the heat equation with potentials, and some nonlinear heat equations are proven in \cite{[Zeng]}.
See also \cite{[DP]} for other results about time analyticity of parabolic type differential equations in the half space.
 One of the goals of this paper is to extend the result to nonlocal parabolic equations which have attracted intensive research (See Corollary \ref{cor1}).

Now let us present the main results formally. For clarity, we will first treat the nonlocal parabolic equations in  the setting of $\mathbb{R}^d$, which reads

	\be\label{PDE}
	u_t(t,x)-\mathrm{L}_\alpha^{\kappa} u(t,x)=0,\ \alpha\in (0,2),\ (t,x)\in [0,1]\times \mathbb{R}^d,
	\ee
	where $\mathrm{L}_\alpha^{\kappa}$ is a nonlocal elliptic operator defined as follows.

	\begin{definition}
	We define
	\be\label{def}	\mathrm{L}_\alpha^{\kappa} f(x):=p.v.\int_{\mathbb{R}^d}(f(x+z)-f(x))\frac{\kappa(x,z)}{|z|^{d+\alpha}}\,dz,\ee
	where $p.v.$ means the principal value.
	Here $\kappa=\kappa(x,z)$ on $\mathbb{R}^d\times \mathbb{R}^d$ is a measurable function	satisfying that
	\be\label{1stco} 0<\kappa_0\leq\kappa(x,z)\leq \kappa_1,\ \kappa(x,z)=\kappa(x,-z),\ee
	and for a constant $\beta\in(0,1)$,
	\be\label{2ndco} |\kappa(x,z)-\kappa(y,z)|\leq \kappa_2|x-y|^\beta,\ee
where $\kappa_0$, $\kappa_1$, and $\kappa_2$ are positive constants.
\end{definition}

The fraction Laplacian $(-\Delta)^{\alpha/2}$ is a typical example of $\mathrm{L}_\alpha^{\kappa}$.
As a special case, we also obtain the time and space derivative estimates of the fractional heat kernel $p_\alpha(t,x)$ of
\be\label{PDE3}
u_t(t,x)+\left(-\Delta\right)^{{\alpha}/{2}} u(t,x)=0,\ \alpha\in (0,2),\ (t,x)\in [0,1]\times \mathbb{R}^d.
\ee

Our results involve both solutions and fractional heat kernels. We say that a function  $p_\alpha(t,x;y)$ is a fractional heat kernel of the equation (\ref{PDE}) in $\mathbb{R}^d$, if
$$\p_tp_\alpha(t,x;y)=\mathrm{L}_\alpha^{\kappa}p_\alpha(t,x;y),\quad \lim\limits_{t\searrow 0}p_\alpha(t,x;y)=\delta(x,y).$$
In \cite{[Chen]}, it was proved that the fractional heat kernel is unique under the condition that
$$|p_\alpha(t,x;y)|\leq \frac{C t}{\left(t^{{1}/{\alpha}}+|x-y|\right)^{d+\alpha}},$$
for a constant $C$. In Lemma \ref{le2}, we improve this uniqueness result by only requiring the growth condition \eqref{grow}. The definition of the fractional heat kernel $p_\alpha(t,x;y)$ on a manifold $\mathrm{M}$ will be given in Section \ref{4thsec}.

The next four theorems are the main results of this paper. The first one is a time analyticity result in the case of $\mathbb{R}^d$.
\begin{theorem}\label{theo01}
(a)	Let $p_\alpha(t,x;y)$ be the  heat kernel of equation (\ref{PDE}). Then there exists a positive constant  $C$ such that for any $t\in(0,1]$ and any nonnegative integer $k$,
	\be\label{goal}|\p_t^k p_\alpha(t,x;y)| \leq \frac{ C^{k+1} k^k}{t^{k-1}}\frac{1}{\left(t^{1/\alpha}+|x-y|\right)^{d+\alpha}}.
	\ee
	
(b) Assume that $u=u(t,x)$ is a solution to \eqref{PDE} with polynomial growth of order $\alpha-\epsilon$, i.e.,
	\be\label{grow}
	|u(t,x)|\leq C_1 \left(1+|x|^{\alpha-\epsilon}\right),\ \ \forall (t,x)\in [0,1]\times\mathbb{R}^d,\ 0<\alpha<2,\ \epsilon\in (0,\alpha)
	\ee
	for a positive constant $C_1$.
	Then
	$$u(t,x)=\int_{\mathbb{R}^d}p_\alpha(t,x-y)u(0,y)\, dy$$
	 is the unique smooth solution with initial data $u(0,\cdot)$. Moreover, $u$ is time analytic for any $t\in (0,1]$ with the radius of convergence being independent of $x$.

(c) For any $t\in(1-\delta,1]$ with a small $\delta>0$, we have
	$$
	u(t,x)=\sum_{j=0}^{\infty} a_{j}(x) \frac{(t-1)^{j}}{j !},
	$$
	where $a_0(x)=u(1,x)$, $a_{j+1}(x)=\mathrm{L}_\alpha^{\kappa} a_{j}(x)$,
	$$
	\left|a_{j}(x)\right|=\left|\left({\mathrm{L}_\alpha^{\kappa}} \right)^j a_{0}(x)\right| \leq C_1 C_2^j j^{j} \left(1+|x|^{\alpha-\epsilon}\right), \quad j=0,1,2, \ldots,
	$$
	and $C_2$ is a positive constant.
\end{theorem}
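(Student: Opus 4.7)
The plan is to reduce Part~(a) to a Cauchy-type integral applied to an off-diagonal bound of the heat kernel in complex time. Accepting the classical estimate $|p_\alpha(t,x;y)|\le Ct/(t^{1/\alpha}+|x-y|)^{d+\alpha}$ under \eqref{1stco}--\eqref{2ndco} (proved in \cite{[Chen]} and refined elsewhere), I would first extend $t\mapsto p_\alpha(t,x;y)$ holomorphically to a complex sector $\Sigma_\theta=\{\zeta\in\mathbb{C}:|\arg\zeta|<\theta\}$ and show that the analogous pointwise bound persists with $t$ replaced by $|\zeta|$. This can be pursued either through a Fourier/contour representation (for the pure fractional Laplacian, $p_\alpha(\zeta,x)=(2\pi)^{-d}\int e^{ix\cdot\xi-\zeta|\xi|^\alpha}\,d\xi$, extended to $\mathrm{L}_\alpha^\kappa$ by a Levi/freezing-coefficient perturbation), or by a real-variable semigroup-splitting argument via iterated Chapman--Kolmogorov, $p_\alpha(\zeta,x;y)=\int p_\alpha(\zeta/2,x;z)\,p_\alpha(\zeta/2,z;y)\,dz$. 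Once this is in hand, Cauchy's formula on $|\zeta-t|=t/2$ yields
\begin{equation*}
|\p_t^k p_\alpha(t,x;y)|\le \frac{k!}{(t/2)^k}\sup_{|\zeta-t|=t/2}|p_\alpha(\zeta,x;y)|\le \frac{C\,k!\,2^k}{t^{k-1}(t^{1/\alpha}+|x-y|)^{d+\alpha}},
\end{equation*}
and absorbing the factor $2^k$ together with the elementary bound $k!\le k^k$ into $C^{k+1}$ produces \eqref{goal}.

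For Part~(b) I would set $v(t,x):=\int_{\mathbb{R}^d}p_\alpha(t,x-y)u(0,y)\,dy$. The growth condition \eqref{grow} and the off-diagonal kernel bound make the integral absolutely convergent, since $d+\alpha>d+(\alpha-\epsilon)$; a standard splitting $|y|^{\alpha-\epsilon}\lesssim|x|^{\alpha-\epsilon}+|x-y|^{\alpha-\epsilon}$ together with scaling gives
\begin{equation*}
\int_{\mathbb{R}^d}\frac{1+|y|^{\alpha-\epsilon}}{(t^{1/\alpha}+|x-y|)^{d+\alpha}}\,dy\le C\bigl(t^{-1}(1+|x|^{\alpha-\epsilon})+t^{-\epsilon/\alpha}\bigr).
\end{equation*}
The same decay justifies differentiating under the integral sign and shows that $v$ is a classical solution of \eqref{PDE} attaining $u(0,\cdot)$ as $t\searrow 0$; the uniqueness Lemma~\ref{le2}, which applies exactly under \eqref{grow}, then forces $u\equiv v$. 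For time-analyticity, differentiating the representation $k$ times and inserting the estimate from Part~(a) produces
\begin{equation*}
|\p_t^k u(t,x)|\le C_1\cdot\frac{C^{k+1}k^k}{t^{k-1}}\int_{\mathbb{R}^d}\frac{1+|y|^{\alpha-\epsilon}}{(t^{1/\alpha}+|x-y|)^{d+\alpha}}\,dy\le C_3^{k+1}k^k\,t^{-k}(1+|x|^{\alpha-\epsilon}),
\end{equation*}
and Stirling's bound $k^k\le e^k k!$ shows that the Taylor series at any $t_0\in(0,1]$ converges on a disk of radius $t_0/(C_3 e)$, uniformly in $x$.

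Part~(c) then specialises Part~(b) to $t=1$. Smoothness of $u$ in $t$ on $(0,1]$, together with the equation, gives by induction $\p_t^{j+1}u=\mathrm{L}_\alpha^\kappa\p_t^j u$, so setting $a_j(x):=\p_t^j u(1,x)$ yields $a_0(x)=u(1,x)$ and $a_{j+1}=\mathrm{L}_\alpha^\kappa a_j$. The coefficient bound $|a_j(x)|\le C_1 C_2^j j^j(1+|x|^{\alpha-\epsilon})$, with $C_2$ a multiple of the constant $C_3$ from (b), is read off from the previous display at $t=1$, and convergence of the series on $(1-\delta,1]$ for any $\delta<(C_2 e)^{-1}$ follows from $j^j/j!\le e^j$.

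The main obstacle, concentrated in Part~(a), is establishing the complex-time Gaussian-type kernel bound uniformly in a sector from the modest hypotheses \eqref{1stco}--\eqref{2ndco} on $\kappa$. The pure fractional Laplacian admits a clean contour deformation in Fourier space, but passing to variable coefficients requires a Duhamel perturbation against a frozen operator while controlling the nonlocal commutator $[\mathrm{L}_\alpha^\kappa,\cdot]$. A purely real-variable alternative is to induct on $k$ through the semigroup factorization and the multivariate Leibniz rule, distributing $k$ time-derivatives across $k+1$ kernel factors of time-step $t/(k+1)$; the delicate combinatorial bookkeeping needed to reach the sharp $k^k/t^{k-1}$ profile, rather than a crude $k!/t^{k}$, is the technical heart of the argument.
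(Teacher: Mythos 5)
Your outline gets the architecture right (kernel derivative bound, mild-solution representation plus uniqueness, Taylor expansion at $t=1$), but it leaves the two hardest pieces of the theorem unproved. For part (a), your primary route requires a holomorphic extension of $t\mapsto p_\alpha(t,x;y)$ to a complex sector together with the uniform bound $C|\zeta|/(|\zeta|^{1/\alpha}+|x-y|)^{d+\alpha}$ for the \emph{variable-coefficient} operator $\mathrm{L}_\alpha^{\kappa}$ under only \eqref{1stco}--\eqref{2ndco}; you yourself flag this as the main obstacle and do not carry it out, and a Levi/frozen-coefficient construction in complex time with control of the nonlocal remainder is a substantial piece of work, not a citation. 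Your fallback real-variable route (distributing $k$ derivatives over $k+1$ Chapman--Kolmogorov factors of step $t/(k+1)$) is likewise left as unexecuted ``bookkeeping''. The paper needs neither: it quotes from \cite{[CZ]} only the two estimates for $p_\alpha$ and for a \emph{single} time derivative $\partial_t p_\alpha$, and then inducts on $k$ using one semigroup factorization per step, $\partial_t^k p_\alpha(t,x;y)=\int\partial_t p_\alpha(t-\tau,x;z)\,\partial_\tau^{k-1}p_\alpha(\tau,z;y)\,dz$ with the asymmetric choice $\tau=(k-1)t/k$, splitting the $z$-integral according to $t>|x-y|^\alpha$ or $t<|x-y|^\alpha$. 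So the sharp $k^k/t^{k-1}$ profile comes out of an elementary induction, whereas your proposal, as written, does not establish \eqref{goal} for general $\kappa$ (for the pure fractional Laplacian your contour argument is essentially what the paper does separately in Section 3).

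For part (b), the statement ``is the unique smooth solution'' is part of the theorem, and you dispose of it by invoking Lemma \ref{le2}; but Lemma \ref{le2} \emph{is} that uniqueness claim, and its proof is the longest portion of the paper's argument: one first upgrades \eqref{grow} to an interior Lipschitz bound $[u]^x_{1;Q_{1/2}(1,x_0)}\le C(1+|x_0|^{\alpha-\epsilon})$ via the H\"older estimate of Schwab--Silvestre/Dong--Zhang and the Schauder-type results of \cite{[DJZ]} (including a cut-off/commutator estimate for $h=\eta\mathrm{L}_\alpha^{\kappa}u-\mathrm{L}_\alpha^{\kappa}(\eta u)$), and then runs a duality argument testing the equation against $p_\alpha^*(t_0-t,\cdot;0,0)\eta(\cdot/R)$ and sending $R\to\infty$, where for $\alpha\ge 1$ the symmetry $\kappa(x,z)=\kappa(x,-z)$ together with the Lipschitz bound is what compensates the non-integrable $|z|^{-d-\alpha}$ singularity. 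None of this is in your proposal, so uniqueness is assumed rather than proved. The remaining steps you give are fine and match the paper: the convolution estimate yielding $|\partial_t^k u|\le C^{k+1}k^k t^{-k+1}\bigl((1+|x|^{\alpha-\epsilon})/t+t^{-\epsilon/\alpha}\bigr)$, the Taylor-remainder argument at $t=1$, and the coefficient bound; for $a_{j+1}=\mathrm{L}_\alpha^{\kappa}a_j$ you should either justify commuting $\partial_t^{j}$ with $\mathrm{L}_\alpha^{\kappa}$ or, as the paper does, insert the uniformly convergent series into the equation and identify coefficients.
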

\begin{remark}
	The estimate $|a_{j}(x)|$ in part (c) of this theorem will be used for the solvability of the backward nonlocal parabolic equations and the time analyticity at $t=0$ in the last section.
\end{remark}
\begin{remark}
From the proof of this theorem, for a constant $C>0$, we have
	\be\label{goal1}
	|\p_t^k u(t,x)|\leq \frac{C^{k+1} k^k}{ t^{k-1}} \Big(\frac {1+|x|^{\alpha-\epsilon}} t+\frac 1 {t^{\varepsilon/\alpha}}\Big),\, \forall t\in(0,1]
	\ee
under the growth condition \eqref{grow}.
\end{remark}

Now let us focus on the heat kernel of the fractional Laplacian $(-\Delta)^{\frac{\alpha}{2}}$ in $\mathbb{R}^d$.
Recall that the fractional heat kernel $p_\alpha(t,x)$ for $u_t+(-\Delta)^{\alpha/2}u(t,x)=0$ is given by
\be\label{de}
p_\alpha(t,x)=C(d,\alpha)  \int_{\mathbb{R}^d}e^{-t|\xi|^\alpha}e^{i\xi x}d\xi,
\ee
which can be deduced by the Fourier transform.
\begin{theorem}\label{theo02}
	
	The following statements are true for the fractional heat kernel $p_\alpha(t,x)$ when $t\geq 0$.
	
	(a) For any $\alpha>0$ and for any positive integer $k$, there exist positive constants  $C$, $C_1$, and $C_2$ such that
	\be\label{time}|\p_t^k p_\alpha(t,x)|\leq  \min\left\{\frac{C_1 C_2^{k\alpha} (k\alpha)^{k\alpha}}{|x|^{k\alpha+d}},\frac{C}{t^{k+d/\alpha}}\Gamma\left(\frac{k\alpha+d}{\alpha}\right)\right\},
	\ee
	which implies that $p_\alpha$ is of Gevrey class in time of order $\alpha$ when $x \neq 0$ and $p_\alpha$ is analytic in time  when $t> 0$. Moreover, if $0< \alpha \le 1$ and $x \neq 0$, then $p_\alpha$ is analytic in time for all $t \geq 0$. Here $\Gamma$ is the gamma function.
	
	(b)
	For  any $\alpha>0$ and for any positive integer $k$,
	\be\label{x}
	|\p^k_x p_\alpha(t,x)| \leq \min\left\{\frac{C_1 C_2^{k+\alpha} (k+\alpha)^{k+\alpha} t}{|x|^{\alpha+k+d}},\frac{C}{t^{(k+d)/\alpha}}\Gamma\left(\frac{k+d}{\alpha}\right)\right\},
	\ee
	which implies that $p_\alpha$ is analytic in space at $|x|\neq 0$. 	Especially, when $t\neq0$, $p_\alpha$ is of Gevrey class with order ${1}/{\alpha}$ in space for any $x$.	
\end{theorem}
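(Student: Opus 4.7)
My plan is to exploit the Fourier representation \eqref{de} and prove each pair of bounds in \eqref{time} and \eqref{x} via two separate arguments. The bound involving $t^{-\bullet}\Gamma(\cdot)$ will come from differentiating under the integral and rescaling, while the bound involving $|x|^{-\bullet}$ together with the Gevrey-type factor $(k\alpha)^{k\alpha}$ or $(k+\alpha)^{k+\alpha}$ requires a contour deformation in the Fourier variable to harvest decay from the oscillatory phase $e^{i\xi\cdot x}$. For the $\Gamma$-type bounds, differentiating \eqref{de} yields
\[
\p_t^k p_\alpha(t,x)=C(d,\alpha)(-1)^k\int_{\bR^d}|\xi|^{k\alpha}e^{-t|\xi|^\alpha}e^{i\xi\cdot x}\,d\xi,
\]
and the substitution $\eta=t^{1/\alpha}\xi$ followed by polar coordinates and $s=|\eta|^\alpha$ reduces the absolute value to a constant multiple of $t^{-(k+d/\alpha)}\Gamma((k\alpha+d)/\alpha)$; the analogous computation with $(i\xi)^k$ in place of $|\xi|^{k\alpha}$ yields the corresponding bound in \eqref{x}.

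For the $|x|$-bounds, by rotational invariance I take $x=|x|e_1$ and reduce to a one-dimensional radial integral via the Bessel representation
\[
p_\alpha(t,x)=\frac{C_d}{|x|^{(d-2)/2}}\int_0^\infty r^{d/2}e^{-tr^\alpha}J_{(d-2)/2}(r|x|)\,dr.
\]
Writing $J_\nu=\tfrac12(H_\nu^{(1)}+H_\nu^{(2)})$ to gain exponential decay in a half-plane, I rotate the $r$-contour from $[0,\infty)$ to the ray $\{\rho e^{i\theta}:\rho>0\}$ with $\theta\in(0,\pi/(2\alpha))$; along this ray one has $|\exp(-t(\rho e^{i\theta})^\alpha)|=e^{-t\rho^\alpha\cos(\alpha\theta)}\le 1$ and $|H_\nu^{(1)}(\rho e^{i\theta}|x|)|\les(\rho|x|)^{-1/2}e^{-|x|\rho\sin\theta}$ for the large-argument regime. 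After differentiating $k$ times in $t$, which inserts $\rho^{k\alpha}$, the rotated integral is dominated by
\[
\frac{1}{|x|^{(d-1)/2}}\int_0^\infty \rho^{d/2+k\alpha-1/2}e^{-|x|\rho\sin\theta}\,d\rho\les \frac{\Gamma(d/2+k\alpha+1/2)}{|x|^{d+k\alpha}(\sin\theta)^{d/2+k\alpha+1/2}},
\]
and Stirling converts $\Gamma(d/2+k\alpha+1/2)$ into $C^k(k\alpha)^{k\alpha}$, which proves \eqref{time} with the choice $\theta=\min(\pi/(4\alpha),\pi/4)$. The same contour argument with $\rho^k$ in place of $\rho^{k\alpha}$ proves the corresponding piece of \eqref{x}; the extra factor of $t$ in front of $|x|^{-(\alpha+k+d)}$ I extract by using that $\p_x^k p_\alpha(0,x)=0$ for $x\ne 0$ (since $p_\alpha(0,\cdot)=\dl_0$) together with the identity $\p_x^k p_\alpha(t,x)=\int_0^t \p_s\p_x^k p_\alpha(s,x)\,ds$. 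The integrand carries the combined factor $|\xi|^{k+\alpha}$, the same rotated estimate gives $|\p_s\p_x^k p_\alpha(s,x)|\le C_1C_2^{k+\alpha}(k+\alpha)^{k+\alpha}|x|^{-(\alpha+k+d)}$ uniformly in $s\in[0,t]$, and the $s$-integration contributes the linear factor $t$.

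The analyticity and Gevrey statements then follow from Stirling. For $t>0$, $\Gamma((k\alpha+d)/\alpha)/k!\les C^k$, which yields time analyticity; for $0<\alpha\le 1$ and $x\ne 0$ the first bound in \eqref{time} is $t$-independent and
\[
\frac{(k\alpha)^{k\alpha}}{k!}\les (\alpha^\alpha e)^k\,k^{(\alpha-1)k}\le R^k,
\]
so the Taylor series at $t_0=0$ converges for $|t|<c|x|^\alpha$, giving analyticity down to $t=0$. The main obstacle I anticipate is the rigorous justification of the contour rotation when $d\ge 2$: the function $|\xi|^\alpha$ is not entire in $\xi\in\mathbb{C}^d$ and shifting a Cartesian coordinate introduces branch points of $(\xi_1^2+|\xi'|^2)^{\alpha/2}$ at $\xi_1=\pm i|\xi'|$. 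The radial-Hankel reduction sidesteps this by working with the scalar variable $r\in[0,\infty)$ on which $r^\alpha$ is single-valued, but then one must track uniform-in-$k$ bounds on $H_\nu^{(1,2)}$ along the rotated ray, controlling the arc at infinity by a Jordan-type estimate and the neighborhood of $r=0$ by the power-series expansion of $H_\nu$, which is integrable against $r^{d/2+k\alpha}$ for every $k\ge 0$.
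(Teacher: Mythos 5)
Your treatment of part (a) is essentially correct and takes a genuinely different route from the paper. The paper proves both bounds through a single lemma by deforming each Cartesian variable $\xi_j$ separately into a sector of small angle $\phi=\min\{\pi/16,\pi/(16\alpha)\}$, choosing branches of $\bigl(\sum_k\xi_k^2\bigr)^{a/2}$ so that the $d$-fold rotation is legitimate; you instead reduce to a one-dimensional radial integral via the Fourier--Bessel formula and rotate the scalar $r$-contour after splitting $J_\nu=\tfrac12(H_\nu^{(1)}+H_\nu^{(2)})$, which sidesteps the branch-point issue you correctly identify. The caveats you flag (small-argument behaviour of $H_\nu$, the arc at infinity being controlled only for $t>0$ with $t=0$ understood as a limit) are real but harmless, and match the paper's own remark about $t=0$; since the Hankel constants depend only on $d$ and the fixed angle, the $(k\alpha)^{k\alpha}$ Gevrey bound and the analyticity conclusions for $t>0$ and for $0<\alpha\le1$, $x\neq0$, come out as in the paper. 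Your device for the factor $t$ in \eqref{x}, writing $\p_x^kp_\alpha(t,x)=\int_0^t\p_s\p_x^kp_\alpha(s,x)\,ds$, is a nice alternative to the paper's induction via integration by parts in $\xi_1$, though the pointwise statement $\p_x^kp_\alpha(0^+,x)=0$ for $x\neq0$ needs a line of justification beyond ``$p_\alpha(0,\cdot)=\delta_0$'' (a weak-limit statement does not by itself give pointwise limits of derivatives).

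The genuine gap is in part (b). Differentiating \eqref{de} in $x$ inserts the symbol $(i\xi)^{\boldsymbol{k}}$, which is \emph{not} radial, so the Fourier--Bessel/Hankel reduction on which your whole contour argument rests does not apply: the claim that ``the same contour argument with $\rho^k$ in place of $\rho^{k\alpha}$ proves the corresponding piece of \eqref{x}'' implicitly replaces $\xi^{\boldsymbol{k}}$ by $|\xi|^{k}$, and you cannot take $|\xi^{\boldsymbol{k}}|\le|\xi|^k$ inside the integral before deforming, since the $|x|^{-(k+\alpha+d)}$ decay comes precisely from the oscillation you would destroy. The natural repairs are nontrivial: expressing Cartesian derivatives of the radial profile through $\p_{|x|}^j$ brings Fa\`a di Bruno-type combinatorics whose constants must be tracked against the target $C^{k}(k+\alpha)^{k+\alpha}$, while expanding $\xi^{\boldsymbol{k}}$ in spherical harmonics (Bochner/Funk--Hecke) leads to Bessel functions of order growing with $k$, for which the large-argument bound $|H_\mu^{(1)}(z)|\lesssim|z|^{-1/2}e^{-\operatorname{Im}z}$ is not uniform in the order (it requires $|z|\gg\mu^2$), so the uniform-in-$k$ constants do not follow from what you wrote. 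This is exactly the point where the paper's coordinate-wise Cartesian rotation pays off: it handles the monomial $\xi^{\boldsymbol{\beta}}$ with no loss, and then extracts the factor $t$ by integration by parts in $\xi_1$ combined with an induction on $|\boldsymbol{\beta}|$. As it stands, your argument proves \eqref{time} but not \eqref{x}; either adopt the Cartesian deformation for the spatial bound, or supply the missing uniform estimates for one of the repairs above.
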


Part (a) of the theorem shows that for any $\alpha \in (0, 1]$, the fractional heat kernel is time analytic down to $t=0, x \neq 0$, which is not true for the standard heat kernel.

By the above Theorem \ref{theo02}, we have
\begin{corollary}\label{add2}
	If the unique smooth solution $u=u(t,x)$ to the fractional heat equation \eqref{PDE3} satisfies the growth condition \eqref{grow} for some $\alpha\in [1,2)$, then it is analytic in space for any $(t,x)\in(0,1]\times \mathbb{R}^d$.  Moreover, when $\alpha\in (0,1)$, $u$ is of Gevrey class of order $1/\alpha$ in space for any $(t,x)\in(0,1]\times \mathbb{R}^d$.
\end{corollary}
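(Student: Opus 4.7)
The plan is to combine the convolution representation from Theorem \ref{theo01}(b) with the spatial derivative bound \eqref{x} in Theorem \ref{theo02}(b), and to estimate $\partial_x^k u$ by a two-region split of the $y$-integral. Since $u$ satisfies \eqref{grow}, Theorem \ref{theo01}(b) gives
$$u(t,x)=\int_{\mathbb{R}^d} p_\alpha(t,x-y)\,u(0,y)\,dy,\qquad (t,x)\in(0,1]\times\mathbb{R}^d.$$
The decay of $\partial_x^k p_\alpha$ from \eqref{x} together with the $(\alpha-\epsilon)$-polynomial growth of $u(0,\cdot)$ makes differentiation under the integral legal, so $\partial_x^k u(t,x)=\int_{\mathbb{R}^d}\partial_x^k p_\alpha(t,x-y)u(0,y)\,dy$.

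I would then split the integral as $A_1+A_2$ with $A_1=\int_{|y-x|\le t^{1/\alpha}}$ and $A_2=\int_{|y-x|> t^{1/\alpha}}$. On $A_1$, where the singular kernel is controlled, I apply the second bound in \eqref{x}, namely $|\partial_x^k p_\alpha(t,x-y)|\le C\,t^{-(k+d)/\alpha}\Gamma((k+d)/\alpha)$, which is uniform in $y$. Integrating the growth factor $1+|y|^{\alpha-\epsilon}$ over a ball of radius $t^{1/\alpha}$ costs only $x$-polynomial and $t$-polynomial factors, and by Stirling $\Gamma((k+d)/\alpha)\le C^{k+1}(k!)^{1/\alpha}$, producing a bound of the form $|A_1|\le C(t,x)^{k+1}(k!)^{1/\alpha}$. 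On $A_2$ I use the first bound in \eqref{x}, $|\partial_x^k p_\alpha(t,x-y)|\le C_1C_2^{k+\alpha}(k+\alpha)^{k+\alpha}\,t\,|y-x|^{-(d+\alpha+k)}$; after the triangle-inequality split $|y|^{\alpha-\epsilon}\le (|x|+|y-x|)^{\alpha-\epsilon}\le |x|^{\alpha-\epsilon}+|y-x|^{\alpha-\epsilon}$ and the radial integrations $\int_{t^{1/\alpha}}^\infty s^{-k-1-\alpha}\,ds$ and $\int_{t^{1/\alpha}}^\infty s^{-k-1-\epsilon}\,ds$, and using $(k+\alpha)^{k+\alpha}\le C^{k+1} k!$, I get $|A_2|\le C(t,x)^{k+1}k!$.

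Combining these yields $|\partial_x^k u(t,x)|\le \widetilde C(t,x)^{k+1}\max\{k!,(k!)^{1/\alpha}\}$ for every $k\ge 0$. When $\alpha\in[1,2)$, $(k!)^{1/\alpha}\le k!$, so the bound collapses to $\widetilde C(t,x)^{k+1}k!$; this is the classical factorial bound that gives real-analyticity of $u(t,\cdot)$ at every $x\in\mathbb{R}^d$ with a positive radius of convergence depending on $t$ and $|x|$. When $\alpha\in(0,1)$, $(k!)^{1/\alpha}$ dominates and the bound $\widetilde C(t,x)^{k+1}(k!)^{1/\alpha}$ is exactly the definition of Gevrey class of order $1/\alpha$ in $x$.

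The main obstacle is not the factorial bookkeeping on the kernel (which is handed to us by Theorem \ref{theo02}(b)) but the tail contribution in $A_2$ coming from the $|y|^{\alpha-\epsilon}$ growth of the initial datum. One must verify that $\int_{|y-x|>t^{1/\alpha}}|y|^{\alpha-\epsilon}|y-x|^{-d-\alpha-k}\,dy$ is finite for every $k\ge 0$, which, on the region $|y|\ge 2|x|$, follows from $|y-x|\ge |y|/2$ and the fact that $\int^\infty s^{\alpha-\epsilon-1-\alpha-k}\,ds=\int^\infty s^{-1-k-\epsilon}\,ds$ converges precisely because $\epsilon>0$. Hence the gap $\epsilon\in(0,\alpha)$ in \eqref{grow} is exactly what makes the argument close, and once convergence is secured the same factorial bound as above appears with constants that are locally uniform in $(t,x)\in(0,1]\times\mathbb{R}^d$.
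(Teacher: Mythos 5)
Your proposal is correct and follows essentially the same route as the paper: represent $u$ by the convolution with $p_\alpha$ via the uniqueness in Theorem \ref{theo01}, differentiate under the integral, split the $y$-integral into a near region (using the bound $\frac{C}{t^{(k+d)/\alpha}}\Gamma\left(\frac{k+d}{\alpha}\right)$ in \eqref{x}) and a far region (using the bound $\frac{C_1C_2^{k+\alpha}(k+\alpha)^{k+\alpha}t}{|x-y|^{\alpha+k+d}}$ together with the triangle inequality on $|y|^{\alpha-\epsilon}$), and read off the factorial versus $(k!)^{1/\alpha}$ growth. The only differences (splitting at radius $t^{1/\alpha}$ instead of $1$, and making the Stirling conversion explicit) are cosmetic.
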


The last two theorems of the paper are in the setting of a complete Riemannian manifold $\mathrm{M}$. We impose the following two  standard conditions on $\mathrm{M}$:

Condition (1): There exists a constant $C_0>0$ such that for any ball $B(x_0,r)$, $x_0\in \mathrm{M}$, $r>0$, and $f\in C^\infty(B(x_0,r))$,
\be\label{cond1}
\int_{B(x_0,r)}|f-f_{B(x_0,r)}|^2\, dx \leq C_0 r^2\int_{B(x_0,r)}|\nabla f|^2\, dx,
\ee
where
$$
f_{B(x_0,r)}:=\frac{1}{|B(x_0,r)|}\int_{B(x_0,r)}f\,dx.
$$

Condition (2): There exists a constant $C^*>0$ such that for any ball $B(x,r)$, $x\in \mathrm{M}$, and $r>0$,
\be\label{cond2}
|B(x,2r)|\leq C^* |B(x,r)|.
\ee
The first condition is the Poincar\'e inequality. The second one is the doubling property of the measure.

 We aim to investigate the pointwise time analyticity of solutions to
 \be\label{PDE2}
\p_tu(t,x)-\mathrm{L}^\alpha u(t,x)=0,\ \alpha\in (0,2), \  (t,x)\in [0,1]\times\mathrm{M},
\ee	
where $\mathrm{L}^\alpha$ is defined as follows.
Let $\Delta$ be the Laplace operator on $\mathrm{M}$ generating a Markov semigroup $P_t$ which has a density $E(t,x;y)$ i.e.  the heat kernel of the standard heat equation on  $\mathrm{M}$. Consider the $\alpha$-stable subordination of $P_t$,
$$P_t^\alpha:=\int_{0}^\infty P_s\, \mu_t^\alpha(ds),\ t\geq 0,$$
where $\mu_t^\alpha$ is a probability measure on $[0,\infty)$ with the Laplace transform
$$
\int_0^\infty e^{-\lambda s}\,\mu_t^\alpha(ds)=e^{-t\lambda^\alpha},\ \lambda\geq 0.
$$
Then $\mathrm{L}^\alpha$ is the infinitesimal generator of $P^\alpha_t$.

In particular, we will also study the fractional heat kernel $p_\alpha(t,x;y)$ and its high order time derivatives $\p_t^kp_\alpha(t,x;y)$.

\begin{theorem}\label{theo03}
	Let $\mathrm{M}$ be a $d-$dimensional complete Riemannian manifold satisfying conditions \eqref{cond1} and \eqref{cond2} and $u=u(t,x)$ be a mild solution to equation \eqref{PDE2}, i.e.,
	\be\label{mild}
	u(t,x)=\int_{\mathrm{M}}p_\alpha(t,x;y)u(0,y)\,dy.
	\ee
	Assume that $u$ is of polynomial growth of order $(\alpha-\epsilon)$ at $t=0$, i.e., for a constant $C>0$,
	\be\label{grow3}
	|u(0,x)|\leq C(1+d(x,0)^{\alpha-\epsilon}),\ 0<\epsilon<\alpha,\  x \in \mathrm{M}.
	\ee
	Then for a constant $C>0$, it holds that
	\be\label{aim2}|\p_t^ku(t,x)|\leq \frac{C^{k+1} k^k}{t^{k-1}}\left(\frac{1+d(x,0)^{\alpha-\epsilon}}{t}+\frac{1}{t^{\epsilon/\alpha}}\right), \forall(t,x)\in (0,\infty)\times\mathrm{M},
\ee
	which implies that $u$ is time analytic  in $(0,\infty)\times\mathrm{M}$ with the radius of convergence independent of $x$.
\end{theorem}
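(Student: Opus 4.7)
The plan is to adapt the Euclidean argument of Theorem \ref{theo01}(b) to the non-translation-invariant manifold setting. The key input is the manifold analog of the pointwise bound \eqref{goal}:
\begin{equation*}
|\partial_t^k p_\alpha(t,x;y)| \le \frac{C^{k+1}k^k}{t^{k-1}}\cdot\frac{1}{|B(x,t^{1/\alpha})|\left(1+d(x,y)/t^{1/\alpha}\right)^{d+\alpha}},
\end{equation*}
which one establishes earlier in Section \ref{4thsec} via subordination of $P_t$ against $\mu_t^\alpha$ combined with the Li--Yau/Saloff-Coste Gaussian upper bound on the standard heat kernel $E(t,x;y)$ guaranteed by \eqref{cond1}--\eqref{cond2}. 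This, together with volume doubling $|B(x,2^jr)|\le (C^*)^j|B(x,r)|$, is all that is needed.

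I would then differentiate \eqref{mild} under the integral, justified by dominated convergence using the $k=0$ kernel bound at a slightly perturbed time, to obtain
\begin{equation*}
\partial_t^k u(t,x)=\int_{\mathrm{M}}\partial_t^k p_\alpha(t,x;y)\,u(0,y)\,dy.
\end{equation*}
Plugging in \eqref{grow3} and the triangle inequality $d(y,0)^{\alpha-\epsilon}\le 2^{\alpha-\epsilon}(d(x,0)^{\alpha-\epsilon}+d(x,y)^{\alpha-\epsilon})$ splits the integral into two pieces. The $(1+d(x,0)^{\alpha-\epsilon})$ piece reduces to $\int_{\mathrm{M}}|\partial_t^k p_\alpha|\,dy\le C^{k+1}k^k/t^k$, which follows by a dyadic annular decomposition $A_j=\{y:2^jt^{1/\alpha}\le d(x,y)<2^{j+1}t^{1/\alpha}\}$ together with the geometric sum $\sum_j (C^*)^j 2^{-j(d+\alpha)}$; this gives the $(1+d(x,0)^{\alpha-\epsilon})/t$ term in \eqref{aim2}. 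The $d(x,y)^{\alpha-\epsilon}$ piece is estimated on each $A_j$ by $(2^jt^{1/\alpha})^{\alpha-\epsilon}(1+2^j)^{-d-\alpha}(C^*)^j$, and the resulting geometric sum yields $t^{(\alpha-\epsilon)/\alpha}=t\cdot t^{-\epsilon/\alpha}$, producing the second term $t^{-\epsilon/\alpha}$. Finally, time analyticity on $(0,\infty)$ follows from \eqref{aim2} via Stirling ($k^k\le e^k k!$), which converts the bound into $k!$-type Taylor coefficient growth with a radius of convergence proportional to $t$, independent of $x$.

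The main obstacle is establishing the kernel-derivative bound with the precise $C^{k+1}k^k/t^{k-1}$ prefactor, which is the substance of the preceding section; the subordination route requires tracking the $k$-dependence of $\partial_t^k\mu_t^\alpha$ carefully enough that $k^k$-growth (rather than $k!$-growth) is preserved after integration against the Gaussian bound for $E(s,x;y)$. A secondary subtlety is that all dyadic sums use only the intrinsic normalization $|B(x,t^{1/\alpha})|$ rather than $t^{d/\alpha}$, so convergence relies on the effective doubling dimension $\log_2 C^*$ being beaten by the kernel decay exponent $d+\alpha$, with enough margin ($\epsilon>0$) to absorb the extra factor $2^{j(\alpha-\epsilon)}$ in the second piece—this is automatic on a $d$-dimensional Riemannian manifold under \eqref{cond2}.
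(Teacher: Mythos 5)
There is a genuine gap, and it sits exactly where you place the whole weight of the argument: the pointwise bound on $\p_t^k p_\alpha(t,x;y)$ that you take as input. First, the profile you write, $|B(x,t^{1/\alpha})|^{-1}\bigl(1+d(x,y)/t^{1/\alpha}\bigr)^{-(d+\alpha)}$, is not the correct kernel profile under only \eqref{cond1}--\eqref{cond2}: doubling plus Poincar\'e does not give Ahlfors regularity $|B(x,r)|\asymp r^d$ across scales, and on, say, a cylinder $\mathbb{R}\times N$ with $N$ compact the two-sided bound \eqref{heatf} forces $p_\alpha(1,x;y)\gtrsim d(x,y)^{-\alpha-1}$ for large $d(x,y)$, which contradicts your claimed upper bound $\lesssim d(x,y)^{-d-\alpha}$ already at $k=0$. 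The correct profile is $t\big/\bigl((d(x,y)^\alpha+t)|B(x,d(x,y)+t^{1/\alpha})|\bigr)$ as in \eqref{heatf}--\eqref{heat3}, and with that replacement your dyadic-annulus computation does go through. Second, even with the corrected profile, the $k$-th order bound is not something ``established earlier via subordination'': it is the content of Theorem \ref{theo04}, whose proof in the paper requires the sharp high-order Gaussian bound $|\p_t^kE(t,x;y)|\le C^{k+1}k^{k-2/3}t^{-k}|B(x,\sqrt t)|^{-1}e^{-cd(x,y)^2/t}$ (Lemma \ref{heat1}, itself a nontrivial iteration/mean-value argument) combined with the composition estimate of Lemma \ref{comp} applied to $E(t^{2/\alpha}s,x;y)$; your alternative sketch of differentiating the subordinating measure in $t$ while keeping $k^k$ (not worse) growth is left entirely open, and this is precisely the hard step.

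By contrast, the paper proves Theorem \ref{theo03} without any high-order kernel derivative bound. It runs an induction on $k$: writing $u(t,x)=\int_{\mathrm M}p_\alpha(t-\tau,x;y)u(\tau,y)\,dy$ and using the subordination formula \eqref{connection}, one differentiates only once in $t$ through the kernel, which requires merely the first-order bound \eqref{Et} for $\p_tE$, and puts the remaining $k-1$ derivatives on $u(\tau,\cdot)$ via the inductive hypothesis; choosing $\tau=(k-1)t/k$ produces the $k^k/t^{k-1}$ growth, and the spatial integrals are closed with the Gaussian-weighted volume estimate \eqref{imp1} rather than polynomial dyadic sums. If you want to salvage your route, you must either prove the corrected kernel bound \eqref{heat3} first (essentially reproving Theorem \ref{theo04} before Theorem \ref{theo03}) or switch to this induction-plus-semigroup scheme; as written, the proposal assumes the hardest estimate in a form that is moreover false in the stated generality.
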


We also obtain the time analyticity of the fractional heat kernel in the manifold setting.
\begin{theorem}\label{theo04}
	Let $\mathrm{M}$ be a $d-$dimensional complete Riemannian manifold satisfying conditions \eqref{cond1} and \eqref{cond2}.
	Then for any $t\in(0,\infty)$, there exist positive constants $C_1$ and $C_2$ such that the fractional heat kernel $p_\alpha(t,x;y)$ satisfies:
	\be\label{heatf}
	\frac{C_1t}{(d(x,y)^\alpha+t)|B(x,d(x,y)+t^{1/\alpha})|}\leq p_\alpha(t,x;y)\leq\frac{C_2t}{(d(x,y)^\alpha+t)|B(x,d(x,y)+t^{1/\alpha})|}.
	\ee
	Moreover, for any integer $k\geq 0$, there exists a constant $C>0$ such that
	\be\label{heat3}
	|\p_t^k p_\alpha(t,x;y)|\leq \frac{C^{k+1} k!}{t^{k-1}}\frac{1}{(d(x,y)^\alpha+t)|B(x,d(x,y)+t^{1/\alpha})|}.
	\ee
\end{theorem}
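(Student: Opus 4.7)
The plan is to use subordination: since $\mathrm{L}^\alpha$ is the infinitesimal generator of the $\alpha$-stable subordinator of the heat semigroup, one has the identity
$$p_\alpha(t,x;y)=\int_0^\infty E(s,x;y)\,\eta_t^\alpha(s)\,ds,$$
where $E(s,x;y)$ is the heat kernel of $\Delta$ on $\mathrm{M}$ and $\eta_t^\alpha(s)\,ds=\mu_t^\alpha(ds)$. Under the Poincar\'e inequality \eqref{cond1} and doubling \eqref{cond2}, the Saloff-Coste--Grigor'yan theorem supplies the two-sided Gaussian bound
$$\frac{c_1}{|B(x,\sqrt{s})|}\exp\!\Big(\!-\frac{d(x,y)^2}{c_2 s}\Big)\le E(s,x;y)\le \frac{C_1}{|B(x,\sqrt{s})|}\exp\!\Big(\!-\frac{d(x,y)^2}{C_2 s}\Big),$$
while it is classical that the one-sided $\alpha$-stable density obeys $\eta_t^\alpha(s)\asymp \min\bigl(t^{-1/\alpha},\,t s^{-1-\alpha}\bigr)$, with a matching lower bound in the range $s\sim t^{1/\alpha}$.

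To prove the upper bound in \eqref{heatf}, I would split the subordination integral at $s_0:=d(x,y)^2\vee t^{2/\alpha}$. On $\{s\le s_0\}$ the Gaussian exponential in $E$ absorbs the small-$s$ part of $\eta_t^\alpha$, while on $\{s\ge s_0\}$ the polynomial tail $t s^{-1-\alpha}$ of $\eta_t^\alpha$ dominates. Doubling \eqref{cond2} allows me to compare $|B(x,\sqrt{s})|$ with $|B(x,d(x,y)+t^{1/\alpha})|$ up to a power of the ratio of radii, and after summing dyadic pieces the two regions combine to give the factor $t/\bigl((d(x,y)^\alpha+t)|B(x,d(x,y)+t^{1/\alpha})|\bigr)$. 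For the lower bound I would restrict the integration to a narrow band $s\in[c\,s_0,C\,s_0]$, where both $E(s,x;y)$ and $\eta_t^\alpha(s)$ are simultaneously bounded below by their size at scale $s_0$, yielding a matching lower estimate.

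For the time-derivative bound \eqref{heat3}, I would exploit the holomorphic extension of $p_\alpha(\cdot,x;y)$. Because $\lambda\mapsto e^{-t\lambda^\alpha}$ is bounded holomorphic in $t$ on a sector $\Sigma_\theta:=\{|\arg t|<\theta\}$ for some $\theta\in(0,\pi/2)$, the subordinated semigroup $P_t^\alpha$ is holomorphic on $\Sigma_\theta$, and the upper half of \eqref{heatf} continues to hold for complex $t\in\Sigma_\theta$ with $|t|$ in place of $t$ (up to a change of constants). Cauchy's integral formula on the circle $\gamma:=\{\zeta:|\zeta-t|=t/2\}$ then gives
$$\p_t^k p_\alpha(t,x;y)=\frac{k!}{2\pi i}\oint_\gamma\frac{p_\alpha(\zeta,x;y)}{(\zeta-t)^{k+1}}\,d\zeta,\qquad |\p_t^k p_\alpha(t,x;y)|\le \frac{k!}{(t/2)^k}\sup_{\zeta\in\gamma}|p_\alpha(\zeta,x;y)|,$$
and inserting the sectorial upper bound for $|p_\alpha(\zeta,x;y)|$ at $|\zeta|\sim t$ delivers \eqref{heat3}.

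The principal obstacle is the two-sided estimate \eqref{heatf} itself: because $\eta_t^\alpha$ has no closed form for generic $\alpha\in(0,2)$, the Gaussian/volume interplay across different scales of $s$ must be balanced against the stable density so that the final bound collapses to a single volume factor at scale $d(x,y)+t^{1/\alpha}$. This matching step is essentially the manifold analogue of the Blumenthal--Getoor/Bogdan-type calculation on $\mathbb{R}^d$ and requires careful case analysis driven by doubling. Once \eqref{heatf} is in place, \eqref{heat3} follows cleanly from the Cauchy argument, modulo a routine verification of the uniform sectorial upper bound for the complexified subordination integral.
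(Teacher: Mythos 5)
Your treatment of \eqref{heatf} is essentially the paper's own argument: subordination $p_\alpha(t,x;y)=\int_0^\infty E(s,x;y)\eta_t(s)\,ds$, the Saloff-Coste two-sided Gaussian bounds under \eqref{cond1}--\eqref{cond2}, upper and lower bounds for the one-sided stable density, and doubling/reverse doubling to compare $|B(x,\sqrt s)|$ with the volume at scale $d(x,y)+t^{1/\alpha}$, splitting the $s$-integral according to whether $d(x,y)\lessgtr t^{1/\alpha}$. Two small cautions: with the paper's normalization the subordinator has index $\alpha/2$ (tail $t\,s^{-1-\alpha/2}$, typical scale $s\sim t^{2/\alpha}$), so your exponents $t\,s^{-1-\alpha}$ and $t^{-1/\alpha}$ are off by this reparametrization; and the global two-sided claim $\eta_t(s)\asymp\min(t^{-1/\alpha},ts^{-1-\alpha})$ is false for $s$ far below the typical scale, where the density is super-exponentially small --- but since you only invoke the lower bound on a band around $s_0$ (as the paper does, via its estimate \eqref{eta} valid for $s>s_0$), this does not damage the argument.

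For \eqref{heat3} you take a genuinely different route, and this is where the gap lies. The paper never complexifies time: it proves Lemma \ref{heat1}, a quantified Gaussian bound $|\p_t^kE(t,x;y)|\le C_1^{k+1}k^{k-2/3}t^{-k}|B(x,\sqrt t)|^{-1}e^{-C_2d(x,y)^2/t}$ (via an iteration estimate plus the parabolic mean value inequality --- this explicit $k$-dependence is the new content, not available in the standard references), then transfers it to $E(t^{2/\alpha}s,x;y)$ by the Krantz--Parks composition lemma (Lemma \ref{comp}) and differentiates under the subordination integral, reusing the computations \eqref{he1}--\eqref{he2}. Your plan --- sectorial holomorphy of the subordinated semigroup plus Cauchy's formula on $|\zeta-t|=t/2$ --- would indeed yield the $C^{k+1}k!/t^{k-1}$ bound \emph{provided} the pointwise upper bound in \eqref{heatf} extends, uniformly in a fixed subsector, to complex time. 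That extension is not ``routine'': it requires either complex-time bounds on the stable subordination density $\eta_\zeta(s)$ (obtained from its inverse-Laplace contour representation) or complex-time Gaussian upper bounds for $E(z,x;y)$ on doubling spaces with Poincar\'e, neither of which follows from the real-time estimates \eqref{hea}, \eqref{new2} you quote, and neither of which you prove. Since this uniform sectorial bound is precisely the analytic heart of \eqref{heat3} in your scheme, your proposal as written leaves the main step open; it could be completed by supplying those complex-time estimates (they are available in the literature, e.g.\ via Davies--Gaffney/Phragm\'en--Lindel\"of techniques), at which point your argument would be a legitimate alternative to the paper's real-variable proof, and would even give the slightly weaker factorial rate $k!$ that the theorem states.
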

Here we remark that \eqref{heatf} is more or less known and our main contribution is \eqref{heat3}.
\begin{remark}
	It is an interesting question whether the uniqueness result still holds in the manifold case under the same growth condition. In the proof of Lemma \ref{le2}, we use \eqref{def} as an explicit formula for $\mathrm{L}_\alpha^{\kappa}$ in $\mathbb{R}^d$. However,  in $\mathrm{M}$, we do not have such a formula for $\mathrm{L}^\alpha$ in \eqref{PDE2}. Therefore, the proof in Lemma \ref{le2} does not work in this case.
\end{remark}

Now we give an outline of the rest of this paper. In Section \ref{2ndsec}, we investigate the pointwise time analyticity of a  solution of \eqref{PDE} in the setting of $\mathbb{R}^d$ and prove Theorem \ref{theo01}. In Section \ref{3rdsec}, by using the Fourier transform and contour integrals, we derive some estimates of the fractional heat kernel $p_\alpha(t,x)$, which implies Theorem \ref{theo02} and Corollary \ref{add2}. In Section \ref{4thsec}, we turn to the setting of a manifold and obtain similar results, Theorems \ref{theo03} and \ref{theo04}. In the proof, we use the subordination relation \eqref{connection} and the estimates for the standard heat kernel. Section \ref{5thsec} is devoted to some corollaries. One of them is about a necessary and sufficient condition for the solvability of the backward nonlocal parabolic equations. Another corollary gives a necessary and sufficient condition under which solutions to \eqref{PDE} or \eqref{PDE2} are time analytic at initial time $t=0$. Also for the nonlinear differential equation \eqref{nonlinear2} with power nonlinearity of order $p$, we prove that a solution $u=u(t,x)$ is time analytic in $t\in (0,1]$ if it is bounded in $[0,1]\times \mathrm{M}$ and $p$ is a positive integer.

Let us collect some frequently used notation.
\begin{itemize}
	\item If $x$ is in $\mathbb{R}^d$, then $|x|=\sqrt{\sum_{i=1}^d x_i^2}$ and $B_r(x)$ is a ball of radius $r$ centered at $x$.
	\item  In $\mathrm{M}$, $B(x, r)$ denotes the geodesic ball of radius $r$ centered at $x$ and
	$|B(x, r)|$  denotes its volume. We define $d(x, y)$ to be the geodesic distance of two points
	$x$, $y$ $\in\mathrm{M}$ and $0$ to be a reference point in $\mathrm{M}$.
	\item $p_\alpha(t,x;y)$ is the fractional heat kernel of equations \eqref{PDE}, \eqref{PDE3}, or \eqref{PDE2}, and $E(t,x;y)$ is the heat kernel of the usual heat equation.
\end{itemize}

Throughout this paper, the constant $C$ may differ from line to line.

\section{Nonlocal parabolic equations in \texorpdfstring{$\mathbb{R}^d$}{}}\label{2ndsec}

In this section, we  prove Theorem \ref{theo01} in the setting of $\mathbb{R}^d$. First, in Subsection \ref{analytic}, we prove that the fractional heat kernel $p_\alpha$ and the mild solution $u=u(t,x)$ to \eqref{PDE}, i.e. \eqref{mild}, are analytic in time. Next,  we prove that $u$ is the unique smooth solution
in Subsection \ref{unique}. Finally, we finish the proof of Theorem \ref{theo01} in Subsection \ref{last}. The proof is divided into several lemmas for easy reading.

\subsection{Time analyticity of the fractional heat kernel \texorpdfstring{$p_\alpha$}{} and mild solutions}\label{analytic}

\begin{lemma}\label{le1}
	Assume that $\kappa(\cdot,\cdot)$ satisfies \eqref{1stco} and \eqref{2ndco}.
	Then \eqref{goal} is true. Moreover,
	if the mild solution $$u=u(t,x)=\int_{\mathbb{R}^d} p_\alpha(t,x;y)u(0,y)\,dy$$ is of polynomial growth of order $\alpha-\epsilon$ as in \eqref{grow},
	then \eqref{goal1} holds.
\end{lemma}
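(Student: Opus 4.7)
The task is to prove the pointwise bound \eqref{goal} on $\p_t^k p_\alpha$ and then deduce \eqref{goal1} on $\p_t^k u$ from the mild solution formula. I would prove \eqref{goal} by induction on $k$, using the Chapman--Kolmogorov identity satisfied by $p_\alpha$ as the engine.

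\emph{Base cases.} For $k=0$ the two-sided Aronson-type estimate
\[
p_\alpha(t,x;y)\asymp \frac{t}{(t^{1/\alpha}+|x-y|)^{d+\alpha}},\qquad t\in(0,1],
\]
is well known under hypotheses \eqref{1stco}--\eqref{2ndco} (Chen--Kumagai and subsequent work). For $k=1$, I would rewrite $\p_t p_\alpha=\mathrm{L}_\alpha^\kappa p_\alpha$ via \eqref{def} and cut the $p.v.$ integral at $|z|=(t^{1/\alpha}+|x-y|)/2$: the inner piece is absorbed by second-order differencing against the H\"older regularity of $p_\alpha(t,\cdot;y)$, and the outer piece by the kernel upper bound itself. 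This yields $|\p_t p_\alpha(t,x;y)|\le C(t^{1/\alpha}+|x-y|)^{-d-\alpha}$, matching \eqref{goal} at $k=1$.

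\emph{Inductive step.} For $k\ge 2$, apply Chapman--Kolmogorov at the mid-time $s=t/2$ and differentiate $k$ times under the Leibniz rule:
\[
\p_t^k p_\alpha(t,x;y) = \frac{1}{2^k}\sum_{j=0}^k\binom{k}{j}\int_{\mathbb{R}^d}\p_s^j p_\alpha(s,x;z)\,\p_s^{k-j} p_\alpha(s,z;y)\,dz\Big|_{s=t/2}.
\]
Feed the induction hypothesis into each factor and use the convolution estimate
\[
\int_{\mathbb{R}^d}\frac{dz}{(s^{1/\alpha}+|x-z|)^{d+\alpha}(s^{1/\alpha}+|z-y|)^{d+\alpha}}\le \frac{C_0}{s\,(s^{1/\alpha}+|x-y|)^{d+\alpha}},
\]
itself a corollary of Chapman--Kolmogorov combined with the $k=0$ upper bound. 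The extreme terms $j=0,k$ contain $\p_s^k p_\alpha$, but there the companion factor is merely $p_\alpha(s,\cdot;\cdot)$ of size $\sim s$, so the prefactor $1/2^k$ together with the convolution gain $1/s=2/t$ and the elementary combinatorial bound $j^j(k-j)^{k-j}\le k^k$ close the recurrence with an enlarged absolute constant, producing \eqref{goal}.

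\emph{Deducing \eqref{goal1} and the main obstacle.} With \eqref{goal} in hand, differentiate the mild solution $u(t,x)=\int_{\mathbb{R}^d} p_\alpha(t,x;y)u(0,y)\,dy$ under the integral sign (justified by \eqref{goal}, \eqref{grow}, and dominated convergence), split $|y|^{\alpha-\epsilon}\le 2^{\alpha-\epsilon}(|x|^{\alpha-\epsilon}+|x-y|^{\alpha-\epsilon})$, and evaluate the two resulting radial integrals, which scale respectively as $t^{-1}$ and $t^{-\epsilon/\alpha}$, to obtain \eqref{goal1}. The principal obstacle is the combinatorics in the inductive step: a cruder bookkeeping would leak factors of $k!$ and produce $(k!)^2\sim k^{2k}$ growth, far too weak to yield time analyticity. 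The essential point is that the mid-point splitting at $s=t/2$ balances the two derivative orders $j$ and $k-j$ symmetrically, so that the inequality $j^j(k-j)^{k-j}\le k^k$ together with $\sum_j\binom{k}{j}=2^k$ keeps the combinatorial factor geometric in $k$, preserving the sharp $k^k$ rate that underlies the analyticity conclusion.
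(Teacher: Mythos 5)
Your deduction of \eqref{goal1} from \eqref{goal} matches the paper, and your base cases are fine (the paper simply quotes the two-sided bound and the $k=1$ derivative bound from Chen--Zhang). The genuine problem is your inductive step. Expanding Chapman--Kolmogorov at the midpoint $s=t/2$ by the Leibniz rule produces the extreme terms $j=0$ and $j=k$, which contain $\p_s^k p_\alpha$ at time $t/2$ --- precisely the quantity of top order that the induction on $k$ does not control. Your proposed fix does not work quantitatively: if you insert the target bound $|\p_s^k p_\alpha(s,x;z)|\le A_k\,s^{-(k-1)}(s^{1/\alpha}+|x-z|)^{-d-\alpha}$ at $s=t/2$, the factor $2^{-k}$ from the chain rule is exactly consumed by $s^{-(k-1)}=2^{k-1}t^{-(k-1)}$, and after the convolution estimate the two extreme terms contribute roughly $2^{(d+\alpha)/\alpha}C_0C_2\,A_k(t/2)$ to $A_k(t)$. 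Since $C_0\gtrsim C_2/C_1^2$ and $C_2\ge C_1$, this multiplicative constant is strictly larger than $1$, so the recurrence $A_k(t)\le \theta A_k(t/2)+(\text{lower order})$ with $\theta>1$ cannot be closed by iteration, and there is no anchor as $t\to 0$ because $\p_t^k p_\alpha$ blows up there. A secondary issue: even the middle terms leak a factor growing in $k$, since $\sum_{j=1}^{k-1}\binom{k}{j}j^j(k-j)^{k-j}$ is of size $\sqrt{k}\,k^k$ rather than $k^k$, while the inductive constant $C$ must stay fixed; as written, the claimed preservation of the $C^{k+1}k^k$ rate is not justified.

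The paper avoids both difficulties by splitting asymmetrically rather than symmetrically: writing $\p_t^k p_\alpha(t,x;y)=\int \p_t p_\alpha(t-\tau,x;z)\,\p_\tau^{k-1}p_\alpha(\tau,z;y)\,dz$ with $\tau=\frac{(k-1)t}{k}$, so that exactly one derivative falls on the short-time factor (controlled by the known $k=1$ bound \eqref{2nd}) and $k-1$ derivatives fall on the long-time factor (controlled by the induction hypothesis). No top-order term appears, there is no binomial sum, and the choice $\tau=(k-1)t/k$ converts $(k-1)^{k-1}/\tau^{k-2}$ into $k^{k-1}/t^{k-2}$, which together with the factor $1/(t-\tau)=k/t$ from the convolution step yields exactly $k^k/t^{k-1}$; the kernel decay is then recovered by the case analysis $t>|x-y|^\alpha$ versus $t<|x-y|^\alpha$. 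To repair your argument you would need either to adopt this asymmetric splitting or to find an independent mechanism (not induction on $k$ alone) for the extreme Leibniz terms.
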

\begin{proof}
From  \cite[(1.8), (1.14), and (1.10)]{[CZ]}, there exist constants $C_1$ and $C_2$ such that for any $t\in (0,1]$ and $x,y\in \mathbb{R}^d$,
\be\label{1st}
\frac{C_1 t}{\left(t^{1/\alpha}+|x-y|\right)^{d+\alpha}} \leq p_\alpha(t,x;y)\leq \frac{C_2 t}{\left(t^{1/\alpha}+|x-y|\right)^{d+\alpha}}
\ee
and
\be\label{2nd}
|\p_t p_\alpha(t,x;y)|\leq \frac{C_2}{\left(t^{1/\alpha}+|x-y|\right)^{d+\alpha}}.
\ee
Thus the conclusions of the lemma are true for $k=1$. Now we proceed by induction. For any integer $k>1$, we assume  that
$$|\p_t^{k-1} p_\alpha(t,x;y)|\leq \frac{ C^{k} (k-1)^{k-1}}{t^{k-2}}\frac{1}{\left(t^{1/\alpha}+|x-y|\right)^{d+\alpha}},\ t\in (0,1].$$
Without loss of generality, we may assume that $C_2\le C^{1/2}$.
Using the semigroup property and \eqref{2nd}, for any $t\in (0,1]$ and $\tau \in (0,t)$, we know that
$$
\p_t^k p_\alpha(t,x;y)=\int_{\mathbb{R}^d}\p_tp_{\alpha}(t-\tau,x;z)\p_{\tau}^{k-1}p_\alpha(\tau,z;y)\,dz.
$$
Therefore, by \eqref{2nd} and the inductive assumption, it holds that
\be\label{sep}
|\p_t^k p_\alpha(t,x;y)|\leq \frac{ C^{k+1/2} (k-1)^{k-1}}{\tau^{k-2}}\int_{\mathbb{R}^d}\frac{1}{\left((t-\tau)^{1/\alpha}+|x-z|\right)^{d+\alpha}}\frac{1}{\left(\tau^{1/\alpha}+|y-z|\right)^{d+\alpha}}\,dz.
\ee
Then for any $t\in (0,1]$, we take $\tau=\frac{(k-1)t}{k}$.

 On one hand, if $t>|x-y|^\alpha$, then we have
\be\label{case1}
\begin{aligned}
|\p_t^k p_\alpha(t,x;y)|&\leq \frac{ C^{k+1/2} (k-1)^{k-1}}{\tau^{k-2}}\frac{1}{\tau^{(d+\alpha)/\alpha}}\int_{\mathbb{R}^d}\frac{1}{\left((t-\tau)^{1/\alpha}+|x-z|\right)^{d+\alpha}}\,dz\\
&\leq \frac{ C^{k+3/4} (k-1)^{k-1}}{\tau^{k-2}}\frac{1}{\tau^{(d+\alpha)/\alpha}}\frac{1}{t-\tau}\\
&\leq\frac{C^{k+7/8} k^k}{t^{k-1}}\frac{1}{t^{(d+\alpha)/\alpha}}\leq  \frac{C^{k+1} k^k}{t^{k-1}}\frac{1}{\left(t^{1/\alpha}+|x-y|\right)^{d+\alpha}}
\end{aligned}
\ee
provided that $C$ is sufficiently large.

On the other hand, if $t<|x-y|^\alpha$, by \eqref{sep} and $$\mathbb{R}^d \subset \left\{z:|x-z| \geq \frac{|x-y|}{2}\right\}\cup \left\{z:|y-z| \geq \frac{|x-y|}{2}\right\},$$ we have
\be\label{sep3}
\begin{aligned}
	&|\p_t^k p_\alpha(t,x;y)|\\
	&\leq \frac{ C^{k+1/2} (k-1)^{k-1}}{\tau^{k-2}}\int_{\left\{z:|x-z| \geq |x-y|/2\right\}}\frac{1}{\left((t-\tau)^{1/\alpha}+|x-z|\right)^{d+\alpha}}\frac{1}{\left(\tau^{1/\alpha}+|y-z|\right)^{d+\alpha}}\,dz\\
	&\quad+\frac{ C^{k+1/2} (k-1)^{k-1}}{\tau^{k-2}}\int_{\left\{z:|y-z| \geq |x-y|/2\right\}}\frac{1}{\left((t-\tau)^{1/\alpha}+|x-z|\right)^{d+\alpha}}\frac{1}{\left(\tau^{1/\alpha}+|y-z|\right)^{d+\alpha}}\,dz\\
	&\leq \frac{ C^{k+1/2} (k-1)^{k-1}}{\tau^{k-2}}\frac{1}{\left((t-\tau)^{1/\alpha}+|x-y|/2\right)^{d+\alpha}}\int_{\left\{z:|x-z| \geq |x-y|/2\right\}}\frac{1}{\left(\tau^{1/\alpha}+|y-z|\right)^{d+\alpha}}\,dz\\
	&\quad+\frac{ C^{k+1/2} (k-1)^{k-1}}{\tau^{k-2}}\frac{1}{\left(\tau^{1/\alpha}+|x-y|/2\right)^{d+\alpha}}
\int_{\left\{z:|y-z| \geq |x-y|/2\right\}}\frac{1}{\left({(t-\tau)}^{1/\alpha}+|{x-z}|\right)^{d+\alpha}}\,dz\\
	&\leq \frac{ C^{k+3/4} (k-1)^{k-1}}{\tau^{k-2}}\frac{1}{\left((t-\tau)^{1/\alpha}+|x-y|/2\right)^{d+\alpha}}\frac{1}{\tau}\\
	&\quad +\frac{ C^{k+3/4} (k-1)^{k-1}}{\tau^{k-2}}\frac{1}{\left(\tau^{1/\alpha}+|x-y|/2\right)^{d+\alpha}}
	\frac{1}{t-\tau}.
\end{aligned}
\ee
Noting $\tau=\frac{(k-1)t}{k}$ and $t<|x-y|^\alpha$, by \eqref{sep3}, we can see that
\be\label{case2}
\begin{aligned}
&|\p_t^k p_\alpha(t,x;y)|\leq \frac{C^{k+7/8} k^k}{t^{k-1}}\frac{1}{|x-y|^{d+\alpha}}\leq \frac{C^{k+1} k^k}{t^{k-1}}\frac{1}{\left(t^{1/\alpha}+|x-y|\right)^{d+\alpha}}.
\end{aligned}
\ee
The combination of \eqref{case1} and \eqref{case2} completes the induction and gives \eqref{goal}.

Next we prove \eqref{goal1}. We claim that
\be\label{xx}
u(t,x)=\int_{\mathbb{R}^d}p_\alpha(t,x;y)u(0,y)\,dy,
\ee
the proof of which is postponed to the next subsection. Then we have
$$\p_t^k u(t,x)=\int_{\mathbb{R}^d}\p_t^k p_\alpha(t,x;y)u(0,y)\,dy.$$
This together with \eqref{goal} implies that
$$
\begin{aligned}
|\p_t^k u(t,x)|&\leq \int_{\mathbb{R}^d}|\p_t^k p_\alpha(t,x;y)||u(0,y)|\,dy\\
&\leq  \int_{\mathbb{R}^d}\frac{C^{k+1} k^k}{ t^{k-1}}\frac{1}{\left(t^{1/\alpha}+|x-y|\right)^{d+\alpha}}(1+|y|^{\alpha-\epsilon})\,dy\\
&\leq\int_{\mathbb{R}^d}\frac{C^{k+1} k^k}{ t^{k-1}}\frac{1}{\left(t^{1/\alpha}+|x-y|\right)^{d+\alpha}}(1+|x|^{\alpha-\epsilon}+|x-y|^{\alpha-\epsilon})\,dy\\
&\leq  \frac{C^{k+1} k^k}{ t^{k-1}}\Big(\frac {1+|x|^{\alpha-\epsilon}} t+\frac 1 {t^{\varepsilon/\alpha}}\Big),
\end{aligned}
$$
i.e., $u$ is time analytic when $t\in (0,1]$.
\end{proof}

\subsection{Uniqueness of solutions}\label{unique}

In this subsection, we prove that the mild solution
 $$u(t,x)=\int_{\mathbb{R}^d}p_\alpha(t,x;y)u(0,y)\,dy$$ in Theorem \ref{theo01} is unique among smooth solutions under the growth condition \eqref{grow}. This will imply  \eqref{xx}.
The proof is based on Propositions 3.4 and 3.5 in \cite{[DJZ]}, which we recall here for the reader's convenience. The idea is that once a solution is in $C^\gamma$ with a small $\gamma \in (0, 1)$, then it is in $C^\alpha$ with $\alpha\in[1,2)$.

 The first lemma is about the case when $\alpha\in (1,2)$.
\begin{lemma}[Proposition 3.4 in \cite{[DJZ]}]\label{33}
	Let $\omega_f(\cdot)$ be a modulus of continuity of a function $f=f(t,x)$ in $Q_{3/4}(1,x_0)$, that is
$$
	|f(t,x)-f(t',x')|\leq \omega_f(\max\{|x-x'|,|t-t'|^{1/\alpha}\}),\ \forall (t,x),(t',x')\in Q_{3/4}(1,x_0),
$$
	where $Q_r(t,x)=(t-r^\alpha,t)\times B_r(x)$.
	Assume that $u$ is a smooth solution to
	$$u_t(t,x)-\mathrm{L}_\alpha^{\kappa} u(t,x)=f(t,x),\ \alpha\in (1,2),\ (t,x)\in [0,1]\times \mathbb{R}^d,$$
	and $u\in C^\gamma([0,1]\times \mathbb{R}^d)$ for some $\gamma\in (0,1)$.
	Then it holds that
$$[u]^x_{\alpha;Q_{1/2}(1,x_0)}+[Du]^t_{(\alpha-1)/\alpha,Q_{1/2}(1,x_0)}+\|\p_tu\|_{L^\infty(Q_{1/2}(1,x_0))}\leq C\|u\|_{\gamma/\alpha,\gamma;[0,1]\times \mathbb{R}^d}+C\sum\limits_{k=1}^\infty\omega_f(2^{-k})$$
	for a constant $C>0$. Here
\begin{align*}
[u]^x_{\alpha;Q_{1/2}(1,x_0)}&:=\sup\limits_{t\in (1-(1/2)^\alpha,1)}[u(t,\cdot)]_{C^\alpha(B_{1/2}(x_0))},\\
[Du]^t_{(\alpha-1)/\alpha,Q_{1/2}(1,x_0)}&:=\sup\limits_{x\in B_{1/2}(x_0)}[Du(\cdot,x)]_{C^{(\alpha-1)/\alpha}((1-(1/2)^\alpha,1))},
\end{align*}
	and $\|u\|_{\gamma/\alpha,\gamma;[0,1]\times \mathbb{R}^d}$ is the $C^{\gamma/\alpha,\gamma}_{t,x}$ norm in $[0,1]\times \mathbb{R}^d$.
\end{lemma}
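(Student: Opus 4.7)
The plan is to combine a freezing-the-coefficient argument with interior Schauder estimates for the constant-coefficient model nonlocal operator. By translation I may assume $x_0=0$. Decompose $\mathrm{L}_\alpha^\kappa = \mathrm{L}_0 + (\mathrm{L}_\alpha^\kappa - \mathrm{L}_0)$, where $\mathrm{L}_0 g(x) := p.v. \int (g(x+z)-g(x))\kappa(0,z)|z|^{-d-\alpha}\,dz$ has $x$-independent kernel, and rewrite the equation as
\[
\partial_t u - \mathrm{L}_0 u = f + (\mathrm{L}_\alpha^\kappa - \mathrm{L}_0)u =: \tilde{f}(t,x).
\]
The problem then splits into (i) an interior Schauder estimate for the frozen-coefficient equation controlling the left-hand side by $\|u\|_{C^{\gamma/\alpha,\gamma}} + \sum_k \omega_{\tilde f}(2^{-k})$, and (ii) a control of the modulus of continuity of the commutator $(\mathrm{L}_\alpha^\kappa - \mathrm{L}_0)u$ in terms of the $C^\gamma$ norm of $u$ and the Hölder exponent $\beta$ of $\kappa$ from \eqref{2ndco}.

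For (i), I would invoke the standard interior regularity theory for symmetric constant-coefficient stable-type operators (in the spirit of Silvestre, Mikulevicius--Pragarauskas, and Dong--Kim): at each dyadic scale $r=2^{-k}$ one approximates $u$ by the solution $v_k$ of the homogeneous equation $\partial_t v_k - \mathrm{L}_0 v_k = 0$ in $Q_r$ with matched boundary/initial data, and uses the interior smoothness of $v_k$ (which ultimately traces back to heat-kernel bounds of the type \eqref{1st}--\eqref{2nd}) to obtain power-decay of the oscillation of $u-v_k$. Summing these oscillation estimates across scales produces the dyadic term $\sum_k \omega_{\tilde f}(2^{-k})$ by a standard Campanato/Morrey-type argument.

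For (ii), the symmetry $\kappa(x,z)=\kappa(x,-z)$ allows the rewriting
\[
(\mathrm{L}_\alpha^\kappa - \mathrm{L}_0)u(x) = \tfrac{1}{2}\int_{\mathbb{R}^d}\bigl(u(x+z)+u(x-z)-2u(x)\bigr)\frac{\kappa(x,z)-\kappa(0,z)}{|z|^{d+\alpha}}\,dz.
\]
Splitting at $|z|=1$, the outer tail is bounded by $\|u\|_{L^\infty}|x|^\beta$ using \eqref{2ndco}. Near $z=0$ the naive bound $|u(x+z)+u(x-z)-2u(x)|\lesssim \|u\|_{C^\gamma}|z|^\gamma$ is not integrable against $|z|^{-d-\alpha}$ since $\gamma<\alpha$; the fix is a dyadic decomposition in $z$ where, on each annulus $|z|\sim 2^{-j}$, one uses both $\|u\|_{C^\gamma}$ and the refined second-difference behaviour of $u$ at that scale (including, when $\alpha>1$, subtraction of affine approximants, which is permitted since $\int z\,\kappa(x,z)|z|^{-d-\alpha}\,dz=0$ by the symmetry of the kernel). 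Summing the geometric series in $j$ gives a modulus of continuity for $(\mathrm{L}_\alpha^\kappa - \mathrm{L}_0)u$ that is acceptable as input to step (i).

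The principal obstacle is the circular dependence between (i) and (ii): the Schauder estimate in (i) demands a modulus of continuity on $\tilde f$, yet the commutator in $\tilde f$ involves $u$ itself and its naive control requires more regularity than $C^\gamma$. The resolution, which is the technical core of the Dong--Jung--Zhang argument, is to run the freezing simultaneously at all dyadic scales and close the estimate by absorbing a small fraction of the sought seminorm into the left-hand side, exploiting the smallness of the freezing neighborhood and interior cut-offs. Once the spatial $C^\alpha$ bound is secured, the $L^\infty$ bound on $\partial_t u$ is immediate from the PDE $\partial_t u = \mathrm{L}_\alpha^\kappa u + f$ (both terms being bounded), and the $C^{(\alpha-1)/\alpha}$ time regularity of $Du$ follows by interpolation, via a Campanato-type characterization of the parabolic Hölder spaces.
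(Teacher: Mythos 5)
A point of order first: the paper does not prove this lemma at all. It is imported verbatim as Proposition 3.4 of \cite{[DJZ]} and stated only ``for the reader's convenience,'' so the only meaningful comparison is with the proof in \cite{[DJZ]}, which indeed belongs to the same family as your outline: approximation at every dyadic scale by solutions of frozen-coefficient, translation-invariant equations, with the errors summed through moduli of continuity. In that sense your steps (i)--(ii) reproduce the right strategy.

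As a self-contained argument, however, your proposal has a genuine gap, and you name it yourself: the circular dependence between the frozen-coefficient Dini--Schauder estimate and the commutator bound is exactly the hard part, and you dispose of it by appealing to ``the technical core of the Dong--Jung--Zhang argument'' instead of carrying it out. Concretely, with only $u\in C^\gamma$ and $\gamma<\alpha$, the near-origin part of the commutator $(\mathrm{L}_\alpha^\kappa-\mathrm{L}_0)u$ admits no modulus of continuity (indeed not even an a priori $L^\infty$ bound) in terms of $\|u\|_{C^\gamma}$: the second-difference estimate $|u(x+z)+u(x-z)-2u(x)|\le C[u]_{C^\gamma}|z|^{\gamma}$ is not integrable against $|z|^{-d-\alpha}$, and the ``refined second-difference behaviour at each scale'' you invoke is precisely the $C^\alpha$ regularity to be proved. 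To close the loop one needs (a) qualitative finiteness of $[u]^x_{\alpha}$ on the relevant cylinder (available here since $u$ is smooth), (b) a quantitative smallness factor multiplying the absorbed seminorm, typically extracted from $|\kappa(x,z)-\kappa(x_0,z)|\le\kappa_2 r^\beta$ on cylinders of small radius $r$ together with an interpolation inequality for the intermediate norms, and (c) a covering/scaling step returning to $Q_{1/2}(1,x_0)$ with the stated right-hand side $C\|u\|_{\gamma/\alpha,\gamma;[0,1]\times\mathbb{R}^d}+C\sum_{k\ge 1}\omega_f(2^{-k})$ --- note that your scheme naturally produces $\omega_{\tilde f}$, and converting this back to $\omega_f$ plus absorbable terms is part of the same absorption. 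None of (b)--(c) is carried out, so what you have is a programme consistent with the cited proof rather than a proof; within the present paper the honest course is simply to cite \cite{[DJZ]}, as the authors do.
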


The second lemma is about the case when $\alpha=1$.
\begin{lemma}[Proposition 3.5 in \cite{[DJZ]}]\label{34}
Assume that $u$ is a smooth solution to
$$
u_t(t,x)-\mathrm{L}_\alpha^{\kappa} u(t,x)=f(t,x),\ \alpha=1,\ (t,x)\in [0,1]\times \mathbb{R}^d,
$$
and $u\in C^\gamma([0,1]\times \mathbb{R}^d)$ for some $\gamma\in (0,1)$.
Then it holds that
$$[Du]_{L^\infty(Q_{1/2}(1,x_0))}+\|\p_tu\|_{L^\infty(Q_{1/2}(1,x_0))}\leq C\|u\|_{\gamma,\gamma;[0,1]\times \mathbb{R}^d}+C\sum\limits_{k=1}^\infty\omega_f(2^{-k})$$
for a constant $C>0$.	
\end{lemma}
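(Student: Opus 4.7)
\textbf{Proof strategy for Lemma~\ref{34}.} I would prove this by adapting the coefficient-freezing / Campanato iteration used in Lemma~\ref{33} to the critical case $\alpha=1$, where $L_1^\kappa$ scales exactly like a first-order derivative. Fix $x_0$ and work on dyadic parabolic cylinders $Q_{r_k}(1,x_0)$ with $r_k = 2^{-k}$. On each such cylinder I would rewrite the equation as
\begin{equation*}
u_t - L_1^{\kappa(x_0,\cdot)} u = f + \bigl(L_1^{\kappa} - L_1^{\kappa(x_0,\cdot)}\bigr) u,
\end{equation*}
so that the leading operator on the left is translation-invariant. The H\"older condition \eqref{2ndco} on $\kappa$ together with the $C^\gamma$ hypothesis on $u$ shows that the commutator term is $\lesssim r_k^\beta \|u\|_{C^\gamma}$ on the near-field part of the singular integral, with the far-field tail absorbed by the global $C^\gamma$ bound.

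For the homogeneous frozen-coefficient problem $v_t = L_1^{\kappa(x_0,\cdot)} v$ one has a Campanato-type decay: since the coefficients are translation-invariant in $x$, Fourier analysis (or stable-like heat-kernel estimates) yields that for some $\sigma\in(0,1)$ the best affine approximation $\ell$ of $v$ at $(1,x_0)$ satisfies $\sup_{Q_\rho}|v-\ell| \le C(\rho/r)^{1+\sigma}\sup_{Q_r}|v|$ for $0<\rho\le r$. A Caffarelli-type perturbation scheme with this decay produces a sequence $\ell_k$ of affine approximations of $u$ on $Q_{r_k}$, and the telescoping estimate $|\nabla \ell_{k+1} - \nabla \ell_k| \lesssim \omega_f(r_k) + r_k^\beta\|u\|_{C^\gamma}$, summed over $k$, gives $|\nabla \ell_k|\le C\|u\|_{\gamma,\gamma} + C\sum_k \omega_f(2^{-k})$ uniformly in $k$.

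Passing $k\to\infty$ identifies $\lim \nabla \ell_k$ with $Du(1,x_0)$, which yields the $L^\infty$ bound on $Du$ over $Q_{1/2}(1,x_0)$ since $x_0$ ranges over $B_{1/2}$. The bound on $\|u_t\|_{L^\infty}$ then follows directly from the equation: one estimates $L_1^\kappa u$ pointwise by splitting the integral in \eqref{def} at $|z|=1$, using the symmetry $\kappa(x,z)=\kappa(x,-z)$ to bound the near-field piece by $\|Du\|_{L^\infty}$ (this is where $\alpha=1$ is exactly right for the cancellation to close) and bounding the far-field piece by the $C^\gamma$ norm of $u$.

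The main obstacle is precisely this critical scaling. Since $L_1^\kappa$ is of order one, the same order as $D$, there is no exponent to spare in the perturbation step, so one cannot upgrade to H\"older control of $Du$ or $u_t$, and accordingly one is forced to accept the Dini-type series $\sum_k \omega_f(2^{-k})$ rather than a single modulus value. The symmetric-cancellation argument for the near-field of $L_1^\kappa u$ is essential; without $\kappa(x,z)=\kappa(x,-z)$, even making sense of the principal value at this scale would require full Lipschitz regularity of $u$, and the strategy would break down.
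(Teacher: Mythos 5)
The paper does not prove this lemma at all: it is quoted verbatim from \cite{[DJZ]} (Proposition 3.5 there), so your proposal can only be judged on its own merits. Your overall scheme (freezing the kernel at $x_0$, comparing with the translation-invariant problem on dyadic cylinders, and telescoping affine approximations with a Dini sum) is indeed the kind of argument used in \cite{[DJZ]}, but two of your key steps fail as written. First, the freezing error $\bigl(L_1^{\kappa}-L_1^{\kappa(x_0,\cdot)}\bigr)u$ cannot be bounded by $r_k^{\beta}\|u\|_{C^\gamma}$: the operator has order $1>\gamma$, so even after symmetrizing in $z$ (using \eqref{2ndco} and $\kappa(x,z)=\kappa(x,-z)$) the near-field integrand is only $O\bigl([u]_{C^\gamma}|z|^{\gamma}\,r_k^{\beta}/|z|^{d+1}\bigr)$, and $\int_{|z|<1}|z|^{\gamma-d-1}\,dz$ diverges for $\gamma\in(0,1)$. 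A $C^\gamma$ function is simply not in the domain of an order-one operator, so the commutator must be applied to the smooth frozen-coefficient approximant $v$ (which has interior derivative estimates), not to $u$ itself; as stated, your perturbation step does not close.

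Second, your derivation of the $\|\partial_t u\|_{L^\infty}$ bound from the equation is also not correct at the critical order. Symmetric cancellation in \eqref{def} together with only $\|Du\|_{L^\infty}$ gives $|u(x+z)+u(x-z)-2u(x)|\le 2\|Du\|_{L^\infty}|z|$, and then the near-field integral $\int_{|z|<1}|z|\cdot|z|^{-d-1}\,dz$ diverges logarithmically, so $L_1^{\kappa}u$ is not pointwise controlled by $\|Du\|_{L^\infty}$ plus the global $C^\gamma$ norm; the cancellation does not ``close'' at the Lipschitz level when $\alpha=1$. To recover $\partial_t u$ you need either a Dini modulus of continuity for $Du$ (which a correctly run iteration does provide, and which is exactly where the series $\sum_k\omega_f(2^{-k})$ is consumed), or you should extract $\partial_t u$ directly from the iteration by including a term linear in $t$ in the approximating polynomials, since for $\alpha=1$ time has parabolic order one. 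With those two repairs your outline becomes essentially the argument of \cite{[DJZ]}; without them the two central estimates are false.
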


The proof of the uniqueness starts with the following lemma.
\begin{lemma}
Assume that $\kappa(\cdot,\cdot)$ satisfies \eqref{1stco} and \eqref{2ndco}. For equation \eqref{PDE}, suppose that a smooth solution $u=u(t,x)$ is of polynomial growth of order $\alpha-\epsilon$, i.e.,
\be\label{grow2}
|u(t,x)|\leq C_1 \left(1+|x|^{\alpha-\epsilon}\right),\forall (t,x)\in [0,1]\times \mathbb{R}^d,\ \alpha\in [1,2),\ \epsilon\in (0,\alpha).
\ee
Then for a constant $C>0$ and for any $x_0 \in {R}^d$, it holds that
\be\label{dgrow} [u]^x_{1;Q_{1/2}(1,x_0)} \leq C\left(1+|x_0|^{\alpha-\epsilon}\right),\ \epsilon>0,
\ee
 where
 $$
 [u]^x_{1;Q_{1/2}(1,x_0)}:=\sup\limits_{t\in(1-(1/2)^\alpha,1)}\|u(t,\cdot)\|_{\text{Lip} (B_{1/2}(x_0))}$$ and $\text{Lip}$ means the Lipschitz norm.
\end{lemma}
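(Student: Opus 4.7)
The plan is to reduce the desired Lipschitz bound for the unbounded solution $u$ to a localized problem to which Lemma~\ref{33} or Lemma~\ref{34} can be applied. Fix $x_0\in\mathbb{R}^d$ and choose a smooth cutoff $\phi\in C_c^\infty(\mathbb{R}^d)$ with $\phi\equiv 1$ on $B_1(x_0)$ and $\mathrm{supp}\,\phi\subset B_2(x_0)$, and set $v:=\phi u$. Then $v$ has compact spatial support and, by \eqref{grow2},
\[
\|v\|_{L^\infty([0,1]\times\mathbb{R}^d)} \leq C(1+|x_0|^{\alpha-\epsilon}).
\]
Since $\phi\equiv 1$ on $Q_{3/4}(1,x_0)$, a direct computation gives
\[
v_t-\mathrm{L}_\alpha^{\kappa}v = \mathrm{L}_\alpha^{\kappa}\bigl((1-\phi)u\bigr) =: f
\]
on $Q_{3/4}(1,x_0)$. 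Because $(1-\phi)u$ vanishes on $B_1(x_0)$, the integral defining $f(t,x)$ is effectively over $\{|z|\geq 1/4\}$ when $x\in Q_{3/4}(1,x_0)$, and \eqref{grow2} together with the integrability of $|z|^{-d-\alpha}(1+|z|^{\alpha-\epsilon})$ at infinity yield $\|f\|_{L^\infty(Q_{3/4}(1,x_0))} \leq C(1+|x_0|^{\alpha-\epsilon})$.

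Next I would upgrade $v$ to a global $C^{\gamma/\alpha,\gamma}$ estimate for some small $\gamma\in(0,\min(\alpha,\beta))$. Inside $Q_{3/4}(1,x_0)$ this follows from interior Hölder regularity for nonlocal parabolic equations with kernels satisfying \eqref{1stco}, applied to the bounded function $v$ with $L^\infty$ forcing $f$, producing a Hölder norm dominated by $\|v\|_{L^\infty}+\|f\|_{L^\infty(Q_{3/4}(1,x_0))}\leq C(1+|x_0|^{\alpha-\epsilon})$. Outside $Q_{3/4}(1,x_0)$, the compact support of $v$ combined with the $L^\infty$ bound extends the Hölder estimate globally. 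Using the just-established Hölder bound for $u$ on $B_{3/4}(x_0)\times[0,1]$, the Hölder continuity of $\kappa$ in $x$ from \eqref{2ndco}, and the smoothness of $\phi$, one then verifies $\omega_f(r)\leq C(1+|x_0|^{\alpha-\epsilon})\,r^{\min(\gamma,\beta)}$ on $Q_{3/4}(1,x_0)$, so the Dini series $\sum_{k\geq 1}\omega_f(2^{-k})$ converges with bound $C(1+|x_0|^{\alpha-\epsilon})$.

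With these inputs, Lemma~\ref{33} (when $\alpha\in(1,2)$) applied to $v$ on $Q_{1/2}(1,x_0)$ gives
\[
[v]^x_{\alpha;\,Q_{1/2}(1,x_0)}+[Dv]^t_{(\alpha-1)/\alpha;\,Q_{1/2}(1,x_0)}+\|\partial_t v\|_{L^\infty(Q_{1/2}(1,x_0))} \leq C(1+|x_0|^{\alpha-\epsilon}),
\]
and interpolating $[v]^x_{\alpha}$ with $\|v\|_{L^\infty}$ yields the spatial Lipschitz bound; when $\alpha=1$ the Lipschitz bound follows directly from Lemma~\ref{34}. Since $\phi\equiv 1$ on $Q_{1/2}(1,x_0)$ we have $v=u$ there, which gives \eqref{dgrow}. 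The main obstacle I anticipate is the Hölder/Dini control on $f$ with linear dependence on $(1+|x_0|^{\alpha-\epsilon})$: the pointwise bound on $f$ is essentially automatic, but the modulus-of-continuity estimate for $f$ requires first invoking the nonlocal parabolic Hölder regularity for $v$, and all constants in this bootstrap must be tracked carefully so that the dependence on $x_0$ enters only through $\|v\|_{L^\infty}$ and $\|f\|_{L^\infty}$.
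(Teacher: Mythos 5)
Your overall strategy --- localize with a cutoff, view the nonlocal commutator as a forcing term $f$ with a controlled modulus of continuity, apply Lemma \ref{33} or Lemma \ref{34}, and then interpolate --- is the same as the paper's, but two steps, as written, have genuine gaps. First, Lemma \ref{33}/\ref{34} requires a quantitative bound on the \emph{global} norm $\|v\|_{\gamma/\alpha,\gamma;[0,1]\times\mathbb{R}^d}$. Your claim that outside $Q_{3/4}(1,x_0)$ ``the compact support of $v$ combined with the $L^\infty$ bound extends the H\"older estimate globally'' is not correct: boundedness plus compact support does not yield a H\"older bound. Moreover, since your cutoff $\phi$ is time-independent, $v=\phi u$ does not vanish for small $t$, and near $t=0$ there is no quantitative H\"older control of $u$ at all (only the growth bound \eqref{grow2} is assumed there). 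This is exactly why the paper first proves the translated interior estimate \eqref{hold} (from Proposition 2.4 of \cite{[DZ1]} or Theorem 7.1 of \cite{[SS]}), valid at every center with constant $C(1+|x_0|^{\alpha-\epsilon})$, and then uses a \emph{space--time} cutoff $\eta$ supported in $Q_{7/8}(1,x_0)$, so that the global H\"older norm of $v=u\eta$ is controlled by \eqref{hold} on the support of $\eta$.

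Second, the bound $\omega_f(r)\leq C(1+|x_0|^{\alpha-\epsilon})r^{\min(\gamma,\beta)}$ cannot be deduced from a H\"older bound for $u$ on $B_{3/4}(x_0)\times[0,1]$ together with \eqref{2ndco} and smoothness of $\phi$. Indeed $f=\mathrm{L}_\alpha^{\kappa}\bigl((1-\phi)u\bigr)$ averages $u(t,\cdot)$ over the complement of $B_1(x_0)$; while spatial differences of $f$ can be shifted onto the kernel $\kappa(x,y-x)|y-x|^{-d-\alpha}$ (which is smooth away from the diagonal and H\"older in $x$), time differences fall directly on $u(t,y)-u(t',y)$ at spatially far points $y$, so one needs quantitative space--time H\"older regularity of $u$ at \emph{every} center, with constant growing at most like $C(1+|y|^{\alpha-\epsilon})$. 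Your proposal only establishes regularity of the localized function $v$, supported near $x_0$, so this far-field input is missing; compare the paper's estimate \eqref{fi3} for $|y|\geq 2$, which uses exactly \eqref{hold} applied at far centers. Both gaps are repaired by front-loading the translated interior estimate \eqref{hold} before introducing the cutoff (and making the cutoff a space--time one), which is how the paper's proof is organized; with that modification your argument goes through.
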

\begin{proof}
From Proposition 2.4 of \cite{[DZ1]} or Theorem 7.1 of \cite{[SS]}, there is a small constant $\gamma\in(0,1)$ such that
\be\label{fi1}[u]_{\gamma/\alpha,\gamma;Q_{7/8}(1,0)}\leq C\|u\|_{L^\infty((0,1);L_1(\omega_\alpha))},\ee
where $\omega_\alpha=\frac{1}{1+|x|^{d+\alpha}}$ and $$\|u\|_{L^\infty((0,1);L_1(\omega_\alpha))}=\sup\limits_{t\in(0,1)}\int_{\mathbb{R}^d}\frac{|u(t,x)|}{1+|x|^{d+\alpha}}\,dx.$$
By \eqref{fi1}, the growth condition \eqref{grow2}, and the space translation $x\to x+x_0$ for any $x_0\in \mathbb{R}^d$, we have
\be\label{hold}
\begin{aligned}
&[u]_{\gamma/\alpha,\gamma;Q_{7/8}(1,x_0)} \leq C\sup\limits_{t\in(0,1)}\int_{\mathbb{R}^d}\frac{|u(t,x+x_0)|}{1+|x|^{d+\alpha}}\,dx\\
&\leq C\int_{\mathbb{R}^d}\frac{(1+|x|^{\alpha-\epsilon}+|x_0|^{\alpha-\epsilon})}{1+|x|^{d+\alpha}}\,dx\leq C(1+|x_0|^{\alpha-\epsilon}).
\end{aligned}
\ee
The next step is to prove
\be\label{hol}
\begin{aligned}
	&\quad[u]^x_{\alpha;Q_{5/8}(1,x_0)}\leq C(1+|x_0|)^{\alpha-\epsilon}.
\end{aligned}
\ee

We modify the proof of Theorem 1.1 of \cite{[DJZ]}.

 Take a cut-off function $\eta=\eta(t,x)\in C_0^\infty(Q_{7/8}(1,x_0))$ satisfying $\eta=1$ in $Q_{5/6}(1,x_0)$ and $\|\p_t^jD^i \eta\|_{L^\infty}\leq C $ when $i\in\{0,1,2\}$ and $j\in\{0,1\}$.

 Let $(t,x), (t',x')$ be two points in $ Q_{3/4}(1,x_0)$ and let $v(t,x):=u(t,x)\eta(t,x)$. Then in $Q_{3/4}(1,x_0)$,
\be\label{fi}
\p_t v=\eta\p_tu+\p_t\eta u=\eta L_{\alpha}^\kappa u+\p_t\eta u= L_{\alpha}^\kappa v+h+\p_t\eta u,
\ee
where
$$h=\eta L_{\alpha}^\kappa u-  L_{\alpha}^\kappa v=p.v. \int_{R^d}\frac{\xi(t,x,y)\kappa(x,y)}{|y|^{d+\alpha}}\,dy$$
and
\be\label{xi}
\xi(t,x,y)=u(t,x+y)(\eta(t,x)-\eta(t,x+y)).
\ee

We are going to apply Lemma \ref{33} or Lemma \ref{34} to \eqref{fi} in $Q_{3/4}(1,x_0)$ and obtain corresponding estimates \eqref{hol} in $Q_{5/8}(1,x_0)$.
To this end, we only need to estimate the H\"older semi-norm of $h$ in $Q_{3/4}(1,x_0)$.

First, when $|y|\leq 5/6-3/4=1/12$, by \eqref{xi}, we have
\be\label{*}
\xi(t,x,y)=\xi(t',x',y)=0.
\ee
By the assumptions on $\eta$ and \eqref{xi},  it holds that
\begin{equation}
            \label{eq5.17}
|\xi(t',x',y)|\leq\left\{\begin{array}{ll}
C|u(t',x'+y)|, & |y|\geq 1 \\
C |u(t',x'+y)||y|, &1/12<|y|<1.
\end{array}\right.
\end{equation}
Now by the triangle inequality, we deduce that
 \be\label{split}
 \begin{aligned}
 & |h(t,x)-h(t',x')|\\
 &\leq \underbrace{\int_{R^d}\frac{|(\xi(t,x,y)-\xi(t',x',y))\kappa(x,y)|}{|y|^{d+\alpha}}\,dy}_{\uppercase\expandafter{\romannumeral1}}\\
&\quad +\underbrace{\int_{R^d}\frac{|\xi(t',x',y)(\kappa(x',y)-\kappa(x,y))|}{|y|^{d+\alpha}}\,dy}_{\uppercase\expandafter{\romannumeral2}}.
\end{aligned}
\ee
By using \eqref{2ndco}, \eqref{grow2}, \eqref{*}, and \eqref{eq5.17}, we have
\be\label{II}
\begin{aligned}
\uppercase\expandafter{\romannumeral2}
&\leq  \int_{|y|\in(1/12,1)}\frac{C  |u(t',x'+y)||y|\kappa_2|x-x'|^\beta}{|y|^{d+\alpha}}\,dy+\int_{|y|>1}\frac{C|u(t',x'+y)|}{|y|^{d+\alpha}}\kappa_2|x-x'|^\beta \,dy\\
&\leq \int_{|y|\in(1/12,1)}\frac{C  (1+|x_0|^{\alpha-\epsilon}+|y|^{\alpha-\epsilon})|x-x'|^\beta}{|y|^{d+\alpha-1}}\,dy\\
&\quad+\int_{|y|>1}\frac{C(1+|x_0|^{\alpha-\epsilon}+|y|^{\alpha-\epsilon})}{|y|^{d+\alpha}}|x-x'|^\beta \,dy \leq C(1+|x_0|^{\alpha-\epsilon})|x-x'|^\beta.
\end{aligned}
\ee
Now we estimate $\uppercase\expandafter{\romannumeral1}$.
When $1/12 \leq|y|<2,$ by the fundamental theorem of calculus, we have
$$
\begin{aligned}
\xi(t,x,y)-\xi(t',x',y)=-y\int_0^1 \left(u(t,x+y)D\eta(t,x+sy)-u(t',x'+y)D\eta(t',x'+sy)\right)\,ds.
\end{aligned}
$$
Therefore, by \eqref{grow2}, \eqref{hold}, and the triangle inequality, it holds that
\be\label{fi2}
\begin{aligned}
& \left|\xi(t, x, y)-\xi\left(t^{\prime}, x^{\prime}, y\right)\right|\\
&\leq|y|\int_0^1\left|u(t,x+y)-u(t',x'+y)\right|\left|D\eta(t',x'+sy)\right|\,ds\\
&\quad+|y|\int_0^1|u(t,x+y)|\left|D\eta(t,x+sy)-D\eta(t',x'+sy)\right|\,ds\\
& \leq C|y|\left| u(t,x+y)-u(t',x'+y)\right|+C|y||u(t,x+y)|\left(|x-x^{\prime}|+|t-t^{\prime}|\right)\\
&\leq C|y|(1+|x_0|^{\alpha-\epsilon})\left(|x-x^{\prime}|^\gamma+|t-t^{\prime}|^{\gamma/\alpha}\right)+C|y|(1+|x_0|^{\alpha-\epsilon})\left(|x-x^{\prime}|+|t-t^{\prime}|\right).
\end{aligned}
\ee
When $|y| \geq 2,$ by \eqref{xi} and \eqref{hold}, we have
\be\label{fi3}
\begin{aligned}
&\left|\xi(t, x, y)-\xi\left(t^{\prime}, x^{\prime}, y\right)\right|=\left|u(t, x+y)-u\left(t^{\prime}, x^{\prime}+y\right)\right|\\
&\le C(1+|x_0|^{\alpha-\epsilon}+|y|^{\alpha-\epsilon})\left(|x-x^{\prime}|^\gamma+|t-t^{\prime}|^{\gamma/\alpha}\right).
\end{aligned}
\ee
Thus, by \eqref{1stco}, \eqref{fi2}, \eqref{fi3}, and \eqref{*}, we infer that
\be\label{I}
\begin{aligned}
\uppercase\expandafter{\romannumeral1}&\leq \int_{|y|\in(1/12,2)}\frac{C|y|(1+|x_0|^{\alpha-\epsilon})\left(|x-x^{\prime}|^\gamma+|t-t^{\prime}|^{\gamma/\alpha}\right)}{|y|^{d+\alpha}}\,dy\\
&\quad+\int_{|y|\in(1/12,2)}\frac{C|y|(1+|x_0|^{\alpha-\epsilon})\left(|x-x^{\prime}|+|t-t^{\prime}|\right)}{|y|^{d+\alpha}}\,dy\\
&\quad+\int_{|y|>2}\frac{C(1+|x_0|^{\alpha-\epsilon}+|y|^{\alpha-\epsilon})\left(|x-x^{\prime}|^\gamma+|t-t^{\prime}|^{\gamma/\alpha}\right)}{|y|^{d+\alpha}}\,dy\\
&\leq C(1+|x_0|^{\alpha-\epsilon})\left(|x-x^{\prime}|^\gamma+|t-t^{\prime}|^{\gamma/\alpha}\right).
\end{aligned}
\ee
Plugging \eqref{II} and \eqref{I} into \eqref{split}, we deduce that
$$|h(t,x)-h(t',x')|\leq C(1+|x_0|^{\alpha-\epsilon})\left(|x-x^{\prime}|^{\gamma'}+|t-t^{\prime}|^{\gamma^\prime/\alpha}\right),$$
where $\gamma'=\min\{\gamma,\beta\}$, which implies that we can take the modulus of continuity as
$$
\omega_h(r) = C(1+|x_0|^{\alpha-\epsilon}) r^{\gamma'}
$$
for any $r\in (0,1)$. According to Lemma \ref{33}, it follows that
\be\label{ite}
\sum_{k=1}^\infty \omega_h\left(\frac{3}{2^{k+1}}\right)\leq \sum_{k=1}^\infty C(1+|x_0|^{\alpha-\epsilon})\left(\frac{3}{2^{k+1}}\right)^{\gamma'}\leq C(1+|x_0|^{\alpha-\epsilon}).
\ee

Now we consider two cases.

\textbf{Case (1):} $\boldsymbol{\alpha\in (1,2)}$. In this case, we apply Lemma \ref{33} to \eqref{fi} in $Q_{3/4}(1,x_0)$ with a scaling argument. From \eqref{hold} and \eqref{ite}, we have
$$
\begin{aligned}
&[v]^x_{\alpha;Q_{5/8}(1,x_0)}\leq C\|v\|_{L^\infty([0,1]\times \mathbb{R}^d)}+C[v]_{\gamma/\alpha,\gamma;[0,1]\times \mathbb{R}^d}+C\sum_{k=1}^\infty \omega_h\left(\frac{3}{2^{k+1}}\right)\\
&\leq C\|u\|_{L^\infty(Q_{7/8}(1,x_0))}+C[u]_{\gamma/\alpha,\gamma;Q_{7/8}(1,x_0)}+C(1+|x_0|^{\alpha-\epsilon})\leq C(1+|x_0|^{\alpha-\epsilon}),
\end{aligned}
$$
by noting that $v=0$ outside of $Q_{7/8}(1,x_0)$.
Because $\eta=1$ in $Q_{5/8}(1,x_0)$, we get \eqref{hol} immediately.

\textbf{Case (2):} $\boldsymbol{\alpha=1}$. In this case, we apply Lemma \ref{34}  with a scaling argument. Using  \eqref{hold} and \eqref{ite}, we have
$$
\begin{aligned}
\|Dv\|_{L^\infty(Q_{5/8}(1,x_0))}&\leq C\|v\|_{L^\infty([0,1]\times \mathbb{R}^d)}+C[v]_{\gamma,\gamma;[0,1]\times \mathbb{R}^d}+C\sum_{k=1}^\infty \omega_h\left(\frac{3}{2^{k+1}}\right)\\
&\leq C\|u\|_{L^\infty(Q_{7/8}(1,x_0))}+C[u]_{\gamma,\gamma;Q_{7/8}(1,x_0)}+C(1+|x_0|^{\alpha-\epsilon})\leq C(1+|x_0|^{\alpha-\epsilon}),
\end{aligned}
$$
which implies \eqref{hol} again.

Finally, by the interpolation inequality, \eqref{hol}, and \eqref{grow2}, we arrive at
$$[u]^x_{1;Q_{1/2}(1,x_0)}\leq C[u]^x_{\alpha;Q_{5/8}(1,x_0)}+C\|u\|_{L^\infty(Q_{5/8}(1,x_0))} \leq C(1+|x_0|)^{\alpha-\epsilon},$$
which finishes the proof.
\end{proof}

Now we are ready to prove the uniqueness part of the theorem, which is stated as follows.
\begin{lemma}\label{le2}
Assume that $\kappa(\cdot,\cdot)$ satisfies \eqref{1stco} and \eqref{2ndco}. Then there is an unique smooth solution $u=u(t,x)$ to \eqref{PDE} satisfying the initial data $u(0,\cdot)$ and the polynomial growth condition \eqref{grow}, which is given by
$$u(t,x)=\int_{\mathbb{R}^d} p_\alpha(t,x;y)u(0,y)\,dy ,\, \forall(t,x)\in (0,1]\times\mathbb{R}^d.$$
\end{lemma}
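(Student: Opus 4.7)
The plan is to establish uniqueness via a duality argument built on the fractional heat kernel itself, after first verifying existence directly. For existence, observe that for $u_0 := u(0,\cdot)$ satisfying \eqref{grow}, the upper bound in \eqref{1st} shows that the mild solution
\begin{equation*}
\tilde u(t,x) := \int_{\mathbb{R}^d} p_\alpha(t,x;y)u_0(y)\,dy
\end{equation*}
is absolutely convergent, smooth on $(0,1]\times\mathbb{R}^d$, solves \eqref{PDE}, and inherits the growth bound \eqref{grow} (with a possibly larger constant); a standard approximation-of-identity argument gives $\tilde u(t,x)\to u_0(x)$ as $t\to 0^+$. By linearity, uniqueness thus reduces to showing that any smooth solution $w$ of \eqref{PDE} with $w(0,\cdot)\equiv 0$ and $|w(t,y)|\le C(1+|y|^{\alpha-\epsilon})$ vanishes identically.

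To prove $w\equiv 0$, I would fix $T\in(0,1]$ and $x_0\in\mathbb{R}^d$, and set $v(s,y) := p_\alpha(T-s,\, x_0;\, y)$ for $s \in (0,T)$. Viewing $p_\alpha$ as a transition density, semigroup theory dictates that $v$, as a function of $(s,y)$, satisfies the backward transposed equation $\partial_s v + (\mathrm{L}_\alpha^{\kappa})^{*}_{y} v = 0$, where $(\mathrm{L}_\alpha^{\kappa})^{*}$ is the formal $L^2$-transpose; the conditions \eqref{1stco}--\eqref{2ndco} on $\kappa$ guarantee that this transpose is again a nonlocal operator of order $\alpha$ of essentially the same type. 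From \eqref{1st}, $|v(s,y)|\lesssim (T-s)/\left((T-s)^{1/\alpha}+|y-x_0|\right)^{d+\alpha}$, so $v$ decays like $|y|^{-(d+\alpha)}$ at spatial infinity. The key computation is then to show that $J(s) := \int_{\mathbb{R}^d} v(s,y) w(s,y)\,dy$ is constant on $(0,T)$. Formally, using the equations satisfied by $v$ and $w$,
\begin{equation*}
J'(s) = -\int (\mathrm{L}_\alpha^{\kappa})^{*} v\cdot w\,dy + \int v\cdot \mathrm{L}_\alpha^{\kappa} w\,dy = 0
\end{equation*}
by the definition of the transpose. Passing $s\to 0^+$ yields $J(0^+) = \int p_\alpha(T,x_0;y)w(0,y)\,dy = 0$, and $s\to T^-$ yields $J(T^-) = w(T,x_0)$, since $v(s,\cdot)$ is an approximation to $\delta_{x_0}$ and $w$ is continuous. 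Hence $w(T,x_0)=0$, and $T$, $x_0$ being arbitrary, $w\equiv 0$.

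The main technical obstacle will be the rigorous justification of $J'(s)=0$, since $w$ itself is not integrable. My approach is to introduce a smooth cutoff $\eta_R\in C_c^\infty(\mathbb{R}^d)$ with $\eta_R\equiv 1$ on $B_R$, $\eta_R\equiv 0$ outside $B_{2R}$, and $|D^j\eta_R|\le CR^{-j}$, and work with the truncated functional $J_R(s):=\int v\cdot (w\eta_R)\,dy$. Green's identity then applies to the compactly supported function $w\eta_R$, leaving a commutator error $\int v\cdot [\mathrm{L}_\alpha^{\kappa},\eta_R]w\,dy$ which must be shown to vanish as $R\to\infty$. Using the explicit principal-value formula \eqref{def} for $\mathrm{L}_\alpha^{\kappa}$ (the ingredient highlighted in the remark following the statement), $[\mathrm{L}_\alpha^{\kappa},\eta_R]w(y)$ is an integral against $\kappa(y,z)/|z|^{d+\alpha}$ of the quantity $w(y+z)(\eta_R(y+z)-\eta_R(y))$; splitting into the regions $|z|\lesssim R$ and $|z|\gtrsim R$ and combining the decay $|v|\lesssim |y|^{-(d+\alpha)}$ with the growth $|w|\lesssim |y|^{\alpha-\epsilon}$, the contribution is of order $R^{-\epsilon}$ and tends to zero. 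The Lipschitz/$C^\alpha$ estimate \eqref{dgrow} from the preceding lemma is essential here, both for interpreting $\mathrm{L}_\alpha^{\kappa} w$ pointwise and for controlling the near-diagonal singular part of the kernel.
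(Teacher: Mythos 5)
Your overall strategy coincides with the paper's: existence via the mild solution, then uniqueness by testing the zero-initial-data solution against the (adjoint) kernel multiplied by a large-scale cutoff and showing the resulting commutator error vanishes as $R\to\infty$. (In the paper the test function is $p_\alpha^*(t_0-t,x;0,0)\,\eta(x/R)$, with the adjoint kernel and the identity \eqref{adj} playing the role of your assertion, made without proof, that $y\mapsto p_\alpha(T-s,x_0;y)$ solves the transposed backward equation; that assertion does need the short justification the paper supplies.)

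The genuine gap is in the key commutator estimate for $\alpha\in[1,2)$. Your error term is $\int v(s,y)\,\mathrm{p.v.}\int w(y+z)\,(\eta_R(y+z)-\eta_R(y))\,\kappa(y,z)\,|z|^{-d-\alpha}\,dz\,dy$, and you propose to bound it by splitting into $|z|\lesssim R$ and $|z|\gtrsim R$ using only the decay of $v$, the growth $|w|\lesssim 1+|y|^{\alpha-\epsilon}$, and $|D\eta_R|\le C/R$. Near $z=0$ this gives an integrand of size $|w(y+z)|\,|z|^{1-d-\alpha}/R$, and $\int_{|z|\le 1}|z|^{1-d-\alpha}\,dz=\infty$ whenever $\alpha\ge 1$, so the claimed $O(R^{-\epsilon})$ bound cannot be obtained from these ingredients alone. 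What is needed (and what the paper does in its $J_2$/$J_3$ decomposition) is to symmetrize in $z$, using $\kappa(y,z)=\kappa(y,-z)$ from \eqref{1stco} together with the principal value: the dangerous term then splits into a piece carrying the second difference $\eta_R(y+z)-2\eta_R(y)+\eta_R(y-z)$, of size $|z|^2/R^2$, and a piece carrying the first difference $w(y+z)-w(y-z)$ times a first difference of $\eta_R$, which is of size $(1+|y|^{\alpha-\epsilon})\,|z|^2/R$ thanks to the weighted Lipschitz bound \eqref{dgrow} of the preceding lemma; both are integrable against $|z|^{-d-\alpha}$ for $\alpha<2$ and give errors tending to $0$ after integration against the kernel bound \eqref{1st}. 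You do flag \eqref{dgrow} as essential for the near-diagonal part, but without the symmetrization step the estimate does not close, since the term in which $w$ appears undifferenced gains nothing from the regularity of $w$. Note also that \eqref{dgrow} is only available for $\alpha\in[1,2)$, which is exactly why the paper runs the simple first-order argument for $\alpha<1$ and the symmetrized argument for $\alpha\ge 1$ separately (with a logarithmic adjustment in the borderline case $\alpha=1$); your sketch should make this case distinction explicit.
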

\begin{proof} By linearity, we just need to prove that if a smooth solution $u$ satisfies \eqref{grow} and $u(0,x)=0$, then $u\equiv 0$.

Fix $(t_0,x_0)\in (0,1]\times \mathbb{R}^d$.
By shifting the coordinates, we may assume $x_0=0$ and it suffices to prove $u(t_0,0)=0$.
Now let $L^*=({L}_{\alpha}^\kappa)^*$ be the adjoint operator of $L_{\alpha}^\kappa$ and let $p_\alpha^*(t,x;s,y)$ be the heat kernel of $L^*$, which by definition, satisfies
\be\label{adj1}
\left\{\begin{array}{l}
\partial_{t} p_\alpha^*(t,x;s,y)-L^* p_\alpha^*(t,x;s,y)=0, \ t>s\ \text{and}\
 x,y \in \mathbb{R}^d  \\
p_\alpha^*(s,x;s,y)=\delta(x,y).
\end{array}\right.
\ee
Because the heat kernels of $L_\alpha^\kappa$ and $L^*$ are independent of time, we have
\be\label{adj2}
p_\alpha(t,x;s,y)=p_\alpha(t-s,x;0,y),\quad
p_\alpha^*(t,x;s,y)=p_\alpha^*(t-s,x;0,y).
\ee
It is also known that
\be\label{adj}
p_\alpha(t,x;s,y)=p_\alpha^*(t,y;s,x),\ t\geq s,
\ee
which can be seen as follows.
For any $t_0,s_0\in(0,1)$ with $s_0\leq t_0$, using \eqref{adj1} and \eqref{adj2}, we have
$$
\begin{aligned}
& \int_{s_0}^{t_0}\int_{\mathbb{R}^d}{L_\alpha^\kappa} p_\alpha(t,z;s_0,y)p_\alpha^*(t_0,z;t,x)\,dzdt\\
&=\int_{s_0}^{t_0}\int_{\mathbb{R}^d}L_\alpha^\kappa p_\alpha(t-s_0,z;0,y)p_\alpha^*(t_0-t,z;0,x)\,dzdt\\
&=\int_{s_0}^{t_0}\int_{\mathbb{R}^d}\p_t p_\alpha(t-s_0,z;0,y)p_\alpha^*(t_0-t,z;0,x)\,dzdt\\
&=p_\alpha(t_0-s_0,x;0,y)-p_\alpha^*(t_0-s_0,y;0,x)+\int_{s_0}^{t_0}p_\alpha(t-s_0,z;0,y)\p_t p_\alpha^*(t_0-t,z;0,x)\,dzdt.
\end{aligned}
$$
By the definition of the adjoint operator, \eqref{adj1}, and \eqref{adj2}, we reach \eqref{adj}. The
integrations above are justified due to known decay estimates of $p_\alpha$ and $p^*_\alpha$.

Then we take a cut-off function $\eta=\eta(x)\in C_c^\infty(B_2(0))$ such that  for a constant $C$,
\be\label{cut1}
\eta=1\ \text{in}\ B_1(0)\quad \text{and}\quad |D\eta|+|D^2 \eta|\leq C.
\ee
We test \eqref{PDE} with $p_\alpha^*(t_0-t,x;0,0)\eta(x/R)$ and use \eqref{adj1} to get that
$$
\begin{aligned}
0&=\int_{0}^{t_0}\int_{\mathbb{R}^d}u_t(t,x)p_\alpha^*(t_0-t,x;0,0)\eta(x/R)\,dxdt\\
&\quad-\int_{0}^{t_0}\int_{\mathbb{R}^d} L_\alpha^\kappa u(t,x) p_\alpha^*(t_0-t,x;0,0)\eta(x/R)\,dxdt\\
&=u(t_0,0)
+\int_{0}^{t_0}\int_{\mathbb{R}^d}u(t,x) (\p_tp_\alpha^*)(t_0-t,x;0,0)\eta(x/R)\,dxdt\\
&\quad-\int_{0}^{t_0}\int_{\mathbb{R}^d} L_\alpha^\kappa u(t,x) p_\alpha^*(t_0-t,x;0,0)\eta(x/R)\,dxdt.
\end{aligned}
$$
Therefore, using \eqref{adj1} and the definition of the adjoint operator, we infer that
\be\label{u}
\begin{aligned}
& u(t_0,0)\\
&=\int_{0}^{t_0}\int_{\mathbb{R}^d} L_\alpha^\kappa (u(t,x))(p_\alpha^*(t_0-t,x;0,0)\eta(x/R))-p_\alpha^*(t_0-t,x;0,0)L_\alpha^\kappa\left(u(t,x)\eta(x/R)\right)\,dxdt\\
&=p.v. \int_{0}^{t_0}\int_{\mathbb{R}^d}\int_{\mathbb{R}^d}\frac{u(t,x+z)p_\alpha^*(t_0-t,x;0,0)\left(\eta(x/R)-\eta((x+z)/R)\right)\kappa(x,z)}{|z|^{d+\alpha}}\,dzdxdt\\
&=p.v.\underbrace{\int_{0}^{t_0}\int_{\mathbb{R}^d}\int_{\mathbb{R}^d}\frac{u(t,y)p_\alpha^*(t_0-t,x;0,0)(\eta(x/R)-\eta(y/R))\kappa(x,y-x)}{|x-y|^{d+\alpha}}\,dydxdt}_{J_1},
\end{aligned}
\ee
where we took $z=y-x$ in the last step. In the sequel, we omit $p.v.$ when there is no confusion.

Next, we aim to show that $J_1 \to 0$ as $R \to \infty$, treating the cases $\alpha<1$ and $\alpha \ge 1$ separately.

\textbf{Case (1):} $\boldsymbol{\alpha<1}$. This case is simpler since the singularity in the integrand is weaker. Using \eqref{grow}, \eqref{1stco}, \eqref{adj}, and \eqref{cut1}, we have
$$
\begin{aligned}
J_1&=\int_{0}^{t_0}\int_{\mathbb{R}^d}\int_{\mathbb{R}^d\backslash B_R(x)}\frac{u(t,y)p_\alpha^*(t_0-t,x;0,0)(\eta(x/R)-\eta(y/R))\kappa(x,y-x)}{|x-y|^{d+\alpha}}\,dydxdt\\
&\quad+\int_{0}^{t_0}\int_{\mathbb{R}^d}\int_{B_R(x)}\frac{u(t,y)p_\alpha^*(t_0-t,x;0,0)(\eta(x/R)-\eta(y/R))\kappa(x,y-x)}{|x-y|^{d+\alpha}}\,dydxdt\\
&\leq C\int_{0}^{t_0}\int_{\mathbb{R}^d}\int_{\mathbb{R}^d\backslash B_R(x)}\frac{p_\alpha(t_0-t,0;0,x)}{|x-y|^{d+\alpha}}(1+|y|^{\alpha-\epsilon})\,dydxdt\\
&\quad+\frac{C}{R}\int_{0}^{t_0}\int_{\mathbb{R}^d}\int_{B_R(x)}\frac{p_\alpha(t_0-t,0;0,x)}{|x-y|^{d+\alpha-1}}(1+|y|^{\alpha-\epsilon})\,dydxdt\\
&\leq C\int_{0}^{t_0}\int_{\mathbb{R}^d}\int_{\mathbb{R}^d\backslash B_R(x)}\frac{p_\alpha(t_0-t,0;0,x)}{|x-y|^{d+\alpha}}(1+|x|^{\alpha-\epsilon}+|x-y|^{\alpha-\epsilon})\,dydxdt\\
&\quad+\frac{C}{R}\int_{0}^{t_0}\int_{\mathbb{R}^d}\int_{B_R(x)}\frac{p_\alpha(t_0-t,0;0,x)}{|x-y|^{d+\alpha-1}}(1+|x|^{\alpha-\epsilon}+|x-y|^{\alpha-\epsilon})\,dydxdt\\
&\leq C\int_{0}^{t_0}\int_{\mathbb{R}^d}p_\alpha(t_0-t,0;0,x)\left(\frac{1}{R^\epsilon}+\frac{1+|x|^{\alpha-\epsilon}}{R^\alpha}\right)\,dxdt \to 0\  \text{as}\  R\to \infty,
\end{aligned}
$$
where for the last step, we used \eqref{1st} and
\be\label{int2}
\begin{aligned}
& \int_{\mathbb{R}^d}p_\alpha(t_0-t,0;0,x)(1+|x|^{\alpha-\epsilon})\,dx\\
&\leq\int_{\mathbb{R}^d}\frac{C(t_0-t)}{\left((t_0-t)^{1/\alpha}+|x|\right)^{d+\alpha}}(1+|x|^{\alpha-\epsilon})\,dx\leq C\left(1+(t_0-t)^{1-\epsilon/\alpha}\right).
\end{aligned}
\ee

\textbf{Case (2):} $\boldsymbol{\alpha\geq 1}$. In this case, by the substitution $z \to -z$ in the second line of \eqref{u}, we have
$$
\begin{aligned}
J_1&=\int_{0}^{t_0}\int_{\mathbb{R}^d}\int_{\mathbb{R}^d}\frac{u(t,x-z)p_\alpha^*(t_0-t,x;0,0)\left(\eta(x/R)-\eta((x-z)/R)\right)\kappa(x,z)}{|z|^{d+\alpha}}\,dzdxdt.
\end{aligned}
$$
 where we used $\kappa(x,z)=\kappa(x,-z)$ in the last equation.
Then by
$$
\begin{aligned}
& u(t,x+z)\left(\eta\left(\frac{x}{R}\right)-\eta\left(\frac{x+z}{R}\right)\right)+u(t,x-z)\left(\eta\left(\frac{x}{R}\right)-\eta\left(\frac{x-z}{R}\right)\right)\\
&=(u(t,x-z)-u(t,x+z))
\left(\eta\left(\frac{x}{R}\right)-\eta\left(\frac{x-z}{R}\right)\right)\\
&\quad -u(t,x+z)\left(\eta\left(\frac{x+z}{R}\right)-2\eta\left(\frac{x}{R}\right)+\eta\left(\frac{x-z}{R}\right)\right),
\end{aligned}
$$
we can write
$$
\begin{aligned}
&J_1=\\
&\underbrace{\frac{1}{2}\int_{0}^{t_0}\int_{\mathbb{R}^d}\int_{\mathbb{R}^d}
\frac{(u(t,x-z)-u(t,x+z))\left(\eta(\frac{x}{R})-\eta(\frac{x-z}{R})\right)\kappa(x,z)p_\alpha^*(t_0-t,x;0,0)}{|z|^{d+\alpha}}\,dzdxdt}_{J_2}\\
&\quad+\underbrace{\frac{1}{2}\int_{0}^{t_0}\int_{\mathbb{R}^d}\int_{\mathbb{R}^d}\frac{-u(t,x+z)\left(\eta(\frac{x+z}{R})-2\eta(\frac{x}{R})+\eta(\frac{x-z}{R})\right)
\kappa(x,z)p_\alpha^*(t_0-t,x;0,0)}{|z|^{d+\alpha}}\,dzdxdt}_{J_3}.
\end{aligned}
$$
For the term $J_3$, by \eqref{grow}, \eqref{adj}, and \eqref{cut1}, we deduce
$$
\begin{aligned}
|J_3|&\leq C\int_{0}^{t_0}\int_{\mathbb{R}^d}\int_{\mathbb{R}^d\backslash B_R(0)}\frac{p_\alpha(t_0-t,0;0,x)}{|z|^{d+\alpha}}(1+|x|^{\alpha-\epsilon}+|z|^{\alpha-\epsilon})\,dzdxdt\\
&\quad+\frac{C}{R^2}\int_{0}^{t_0}\int_{\mathbb{R}^d}\int_{B_R(0)}\frac{p_\alpha(t_0-t,0;0,x)}{|z|^{d+\alpha-2}}(1+|x|^{\alpha-\epsilon}+|z|^{\alpha-\epsilon})\,dzdxdt\\
&\leq C\int_{0}^{t_0}\int_{\mathbb{R}^d}p_\alpha(t_0-t,0;0,x)\left(\frac{1}{R^\epsilon}
+\frac{1+|x|^{\alpha-\epsilon}}{R^\alpha}\right)\,dxdt \to 0\  \text{as}\  R\to \infty,
\end{aligned}
$$
where we used \eqref{int2} in the last step.

Finally, we estimate $J_2$. When $\alpha>1$,  by \eqref{grow}, \eqref{dgrow},  and \eqref{int2}, we have
$$
\begin{aligned}
|J_2|&\leq  C\int_{0}^{t_0}\int_{\mathbb{R}^d}\int_{\mathbb{R}^d\backslash B_R(0)}\frac{p_\alpha(t_0-t,0;0,x)}{|z|^{d+\alpha}}(1+|x|^{\alpha-\epsilon}+|z|^{\alpha-\epsilon})\,dzdxdt\\
&\quad+\frac{C}{R^2}\int_{0}^{t_0}\int_{\mathbb{R}^d}\int_{B_R(0)}\frac{p_\alpha(t_0-t,0;0,x)}{|z|^{d+\alpha-2}}(1+|x|^{\alpha-\epsilon})\,dzdxdt\\
&\quad+\underbrace{\frac{C}{R}\int_{0}^{t_0}\int_{\mathbb{R}^d}\int_{B_R(0)\backslash B_R(0)}\frac{p_\alpha(t_0-t,0;0,x)}{|z|^{d+\alpha-1}}(1+|x|^{\alpha-\epsilon}+|z|^{\alpha-\epsilon})\,dzdxdt}_{J_4}\\
&\leq C\int_{0}^{t_0}\int_{\mathbb{R}^d}p_\alpha(t_0-t,0;0,x)\left(\frac{1}{R^\epsilon}+\frac{1+|x|^{\alpha-\epsilon}}{R^\alpha}\right)\,dxdt\\
&\quad+\frac{C}{R}\int_{0}^{t_0}\int_{\mathbb{R}^d}p_\alpha(t_0-t,0;0,x)\left((1-R^{1-\alpha})(1+|x|^{\alpha-\epsilon})+(R^{1-\epsilon}-1)\right)\,dxdt\\
& \to 0\quad  \text{as}\  R\to \infty.
\end{aligned}
$$
When $\alpha=1$, we only need to estimate $J_4$ slightly differently. By \eqref{int2},
$$J_4\leq \frac{C}{R}\int_{0}^{t_0}\int_{\mathbb{R}^d}p_1(t_0-t,0;0,x)\left(\ln(R)(1+|x|^{1-\epsilon})+(R^{1-\epsilon}-1)\right)\,dx\to 0\quad  \text{as}\  R\to \infty.$$
Combining these two cases and plugging into \eqref{u}, we get $u(t_0,0)=0$, which finishes the proof.
\end{proof}

\subsection{Completion of proof of Theorem \ref{theo01}}\label{last}
\begin{proof}
We have proved part (a) and (b) of Theorem \ref{theo01} in Lemmas \ref{le1} and \ref{le2}. Thus it remains to show part (c). First we fix a number $R \geq 1$ and let $x \in B_R(0)$, $t \in[1-\delta, 1]$ for some small $\delta>0$.  For any positive integer $j$, Taylor's theorem implies that
\begin{equation}\label{taylor}
u(t,x)-\sum_{i=0}^{j-1} \partial_{t}^i u(1,x) \frac{(t-1)^{i}}{i !}=\frac{(t-1)^{j}}{j !} \partial_{t}^{j} u(s,x),
\end{equation}
where $s=s(x, t, j) \in[t, 1]$. By \eqref{goal1}, for sufficiently small $\delta>0,$ the right-hand side of
\eqref{taylor} converges to 0 uniformly with respect to $x\in B_R(0)$ as $j \rightarrow \infty$. Hence,
$$
u(t,x)=\sum_{j=0}^{\infty} \partial_{t}^{j} u(1,x) \frac{(t-1)^{j}}{j !}
$$
i.e., $u$ is analytic in time with radius $\delta$. Denote $a_{j}=a_{j}(x)=\partial_{t}^{j} u(1,x)$. By \eqref{goal1} again, we
have
$$
\partial_{t} u(t,x)=\sum_{j=0}^{\infty} a_{j+1}(x) \frac{(t-1)^{j}}{j !}\quad \text {and}\quad \mathrm{L}_\alpha^{\kappa} u(t,x)=\sum_{j=0}^{\infty} \mathrm{L}_\alpha^{\kappa} a_{j}(x) \frac{(t-1)^{j}}{j !},
$$
where both series converge uniformly  with respect to $(t,x)\in [1-\delta,1]\times B_R(0)$. Since $u$ is a solution of \eqref{PDE}, this implies that $\mathrm{L}_\alpha^{\kappa} a_{j}(x)=a_{j+1}(x)$
with
$$
\left|a_{j}(x)\right| \leq C^{j+1}j^j(1+|x|^{\alpha-\epsilon}).
$$
This completes the proof
of Theorem \ref{theo01}.
\end{proof}

\section{Fractional heat kernel estimates on \texorpdfstring{$\mathbb{R}^d$}{}}\label{3rdsec}
In this section, we estimate the time and space derivatives of the fractional heat kernel $p_\alpha(t,x)$ for \eqref{PDE3}. The main tools are the Fourier transform and contour integrals.
We first state and prove the following lemma, which is needed for the proof of Theorem \ref{theo02} and Corollary \ref{add2}.
\begin{lemma}\label{main}
(a) If $\alpha>0$, $\beta\geq 0$, and $t\geq 0$,
		there exist constants $C$, $C_1$, and $C_2$ such that
		\be\label{any}
		\big|\int_{\mathbb{R}^d} e^{-t|\xi|^\alpha}e^{i \xi x}|\xi|^{\beta}d\xi \big|\leq \min\left\{\frac{C_1 C_2^{\beta} \beta^{\beta}}{|x|^{\beta+d}},\frac{C}{t^{(\beta+d)/\alpha}}\Gamma\left(\frac{\beta+d}{\alpha}\right)\right\},
		\ee
		where $\Gamma$ is the gamma function.
		
(b) Let $\boldsymbol{\beta}=(\beta_1,\beta_2,\ldots,\beta_d)$ where $\beta_j$ is a nonnegative integer with $j\in\{1,2,\ldots,d\}$, then we have
		\be\label{int}
		\big|\int_{\mathbb{R}^d} e^{-t|\xi|^\alpha}e^{i \xi x}\xi^{\boldsymbol{\beta}}d\xi \big|\leq \min\left\{\frac{C_1 C_2^{\alpha+|\boldsymbol{\beta}|} (\alpha+|\boldsymbol{\beta}|)^{\alpha+|\boldsymbol{\beta}|} t}{|x|^{\alpha+|\boldsymbol{\beta}|+d}},\frac{C}{t^{(|\boldsymbol{\beta}|+d)/\alpha}}\Gamma\left(\frac{|\boldsymbol{\beta}|+d}{\alpha}\right)\right\},
		\ee
where $\xi^{\boldsymbol{\beta}}=\xi_1^{\beta_1}\xi_2^{\beta_2}\cdots \xi_d^{\beta_d}$ and $|\boldsymbol{\beta}|:=\sum\limits_{k=1}^d \beta_k$.	
\end{lemma}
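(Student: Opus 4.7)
The plan is to establish the two bounds inside each minimum separately, with parts (a) and (b) handled by closely parallel arguments.

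The second bound (the $t^{-(\cdot)/\alpha}\Gamma(\cdot)$ one) is a straightforward scaling estimate. Substituting $\eta = t^{1/\alpha}\xi$ in (a) rewrites the integral as $t^{-(\beta+d)/\alpha}\int_{\mathbb{R}^d}e^{-|\eta|^\alpha}e^{i\eta\cdot(t^{-1/\alpha}x)}|\eta|^\beta\,d\eta$; bounding the phase factor by $1$ and evaluating the radial integral by polar coordinates plus $u=|\eta|^\alpha$ gives $\frac{|S^{d-1}|}{\alpha}\Gamma\!\left(\frac{\beta+d}{\alpha}\right)$. Part (b) follows the same way after bounding $|\xi^{\boldsymbol{\beta}}|\leq|\xi|^{|\boldsymbol{\beta}|}$.

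The first bound (the $|x|^{-(\beta+d)}$ one) is the main content and uses contour deformation to exploit the oscillation $e^{i\xi\cdot x}$. For part (a), rotational symmetry of $e^{-t|\xi|^\alpha}|\xi|^\beta$ reduces to the case $x=|x|e_1$, and polar coordinates convert the integral into a one-dimensional integral in the radial variable $r$ paired with an oscillatory spherical kernel $\int_{S^{d-1}}e^{ir|x|\omega_1}d\sigma(\omega)$. After using the standard Bochner/Bessel representation (or, equivalently, the 1D reduction of the radial Fourier transform) this becomes a one-dimensional integral carrying a phase $e^{ir|x|}$. In that 1D integral I would rotate the contour to $r=\rho e^{i\theta}$ for a fixed $\theta\in\bigl(0,\min(\pi/2,\pi/(2\alpha))\bigr)$: this choice makes both $e^{-tr^\alpha}$ (since $\cos(\alpha\theta)>0$) and $e^{ir|x|}$ (since $-|x|\rho\sin\theta<0$) decay on the new contour, so Cauchy's theorem with the usual vanishing-at-infinity argument justifies the deformation. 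Discarding the nonnegative factor $e^{-t\rho^\alpha\cos(\alpha\theta)}$ and taking absolute values leaves
\begin{equation*}
\int_0^\infty e^{-|x|\rho\sin\theta}\rho^{\beta+d-1}\,d\rho=\frac{\Gamma(\beta+d)}{(|x|\sin\theta)^{\beta+d}},
\end{equation*}
and Stirling's formula then converts $\Gamma(\beta+d)$ into $C_1 C_2^\beta\beta^\beta$ after absorbing the $(\sin\theta)^{-(\beta+d)}$ factor and dimensional constants into $C_1,C_2$.

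For part (b), the extra factor $t$ together with the improved decay $|x|^{-\alpha}$ come from exploiting the smallness of $e^{-t|\xi|^\alpha}$ near $t=0$. Since $\int_{\mathbb{R}^d}e^{i\xi x}\xi^{\boldsymbol{\beta}}\,d\xi$ vanishes for $x\neq 0$ (it is a derivative of a Dirac mass at the origin), the identity $\partial_s e^{-s|\xi|^\alpha}=-|\xi|^\alpha e^{-s|\xi|^\alpha}$ yields
\begin{equation*}
\int_{\mathbb{R}^d}e^{-t|\xi|^\alpha}e^{i\xi x}\xi^{\boldsymbol{\beta}}\,d\xi=-\int_0^t\int_{\mathbb{R}^d}|\xi|^\alpha e^{-s|\xi|^\alpha}e^{i\xi x}\xi^{\boldsymbol{\beta}}\,d\xi\,ds,
\end{equation*}
where the vanishing of the $s\to 0^+$ boundary term is justified by the known pointwise decay of $p_\alpha(s,x)$ for $x\neq 0$. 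After an orthogonal change of variables sending $x$ to $|x|e_1$, the polynomial $\xi^{\boldsymbol{\beta}}$ becomes a polynomial in $\eta$ of the same total degree, and the same contour-deformation argument used in part (a) estimates the inner integral uniformly in $s$ by $C_1C_2^{\alpha+|\boldsymbol{\beta}|}(\alpha+|\boldsymbol{\beta}|)^{\alpha+|\boldsymbol{\beta}|}/|x|^{\alpha+|\boldsymbol{\beta}|+d}$; integration over $s\in[0,t]$ then produces the factor $t$.

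The main obstacle is carrying out the contour deformation rigorously in dimensions $d\geq 2$, because $|\xi|^\alpha$ is only piecewise analytic and its extension into the complex domain is multivalued. Depending on taste, this is handled either by a clean one-dimensional reduction via the Bessel representation of the spherical Fourier transform (pushing the deformation into the 1D radial integral) or by a coordinate-by-coordinate shift $\xi_1\mapsto \xi_1+i\epsilon$ in which branches of $(\xi_1^2+|\xi'|^2)^{\alpha/2}$ must be chosen carefully. The other delicate point is tracking constants to convert $\Gamma(\beta+d)$ into the sharp form $C_2^\beta\beta^\beta$ via Stirling, with the $d$-dependent overhead absorbed into $C_1$.
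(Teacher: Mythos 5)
Your overall strategy is sound and the two bounds you target are the right ones, but your route differs from the paper's in both halves, so let me compare. For the $|x|^{-(\beta+d)}$ bound in (a), the paper does not reduce to a radial integral at all: it rotates $x$ to $(|x|/\sqrt d,\dots,|x|/\sqrt d)$, extends $|\xi|^a$ holomorphically as $(\sum_k\xi_k^2)^{a/2}$ with explicitly chosen branches, and rotates each Cartesian variable $\xi_j$ by a fixed small angle $\phi=\min\{\pi/16,\pi/(16\alpha)\}$, one coordinate at a time; the smallness of $\phi$ keeps $\arg(\sum_k\xi_k^2)$ in $[-2\phi,2\phi]$, so the branch is unambiguous and $\operatorname{Re}(\cdot)^{\alpha/2}\geq|\eta|^\alpha\cos(\alpha\phi)$, and one lands on an integral over the first quadrant dominated by $e^{-\sin(\phi)\sum_k\eta_k|x|/\sqrt d}$, whence $\Gamma(\beta+1)/|x|^{\beta+d}$ and Stirling. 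Your Bochner/Bessel reduction can be made to work, but it is glossed at exactly the delicate spot: the spherical factor is $c_d(r|x|)^{-(d-2)/2}J_{(d-2)/2}(r|x|)$, not a single phase $e^{ir|x|}$; you must split into Hankel functions $H^{(1)},H^{(2)}$, rotate the two pieces into opposite half-planes, and deal with their singularity at $r=0$ (and check the large-arc terms, which for $t>0$ die because of $e^{-tR^\alpha\cos(\alpha\psi)}$, the $t=0$ case being taken as a limit). The paper's Cartesian rotation avoids special functions entirely, at the cost of the multidimensional branch bookkeeping you correctly identify as the main obstacle. For (b), your argument is genuinely different from the paper's: the paper integrates by parts in $\xi_1$ (the factor $t$ appears when $\partial_{\xi_1}$ hits $e^{-t|\xi|^\alpha}$) and runs an induction on $|\boldsymbol{\beta}|$ using the part-(a) machinery, whereas you write the integral as $-\int_0^t\int|\xi|^\alpha e^{-s|\xi|^\alpha}e^{i\xi x}\xi^{\boldsymbol{\beta}}\,d\xi\,ds$ and get $t$ from the $s$-integration after a bound uniform in $s$. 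Your version is arguably cleaner (no induction), and the uniform-in-$s$ contour bound for the weight $|\xi|^\alpha\xi^{\boldsymbol{\beta}}$ does deliver $\Gamma(\alpha+|\boldsymbol{\beta}|+1)/|x|^{\alpha+|\boldsymbol{\beta}|+d}$, which Stirling converts to the constant in \eqref{int}; the coefficient growth caused by rotating $\xi$ so that $x=|x|e_1$ is at most $d^{|\boldsymbol{\beta}|}$ and is absorbable into $C_2^{|\boldsymbol{\beta}|}$.

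Two steps in your sketch need shoring up. First, the vanishing of the $s\to0^+$ boundary term: "derivative of a Dirac mass" is a distributional statement, and the "known pointwise decay of $p_\alpha(s,x)$" you cite covers only $\boldsymbol{\beta}=0$; for general $\boldsymbol{\beta}$ you need pointwise decay of $\partial_x^{\boldsymbol{\beta}}p_\alpha(s,x)$ as $s\to0$ with $x\neq0$ fixed, which is exactly the kind of estimate the lemma is meant to produce, so quoting it risks circularity if the lemma is meant to be self-contained. This is fixable with your own machinery: on the deformed contour the integrand is dominated, uniformly in $s\geq0$, by $e^{-c|x|\rho}\rho^{|\boldsymbol{\beta}|+d-1}$, so dominated convergence identifies the $s\to0^+$ limit with the deformed integral at $s=0$, which one then checks vanishes (a direct computation in the one-dimensional model, or a parity/explicit evaluation in general). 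Second, in (a) you should either carry out the Hankel-function splitting honestly (two phases, opposite rotations, behavior at $r=0$) or switch to the Cartesian coordinate-by-coordinate rotation as in the paper; as written, "this becomes a one-dimensional integral carrying a phase $e^{ir|x|}$" skips the part of the argument where dimension $d\geq2$ actually bites.
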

\begin{remark}
	When $t=0$, the integrals in \eqref{any} and \eqref{int} can be understood as the limit as $t\searrow 0$.
\end{remark}
\begin{proof}[Proof of Lemma \ref{main}]
The bound $\frac{C}{t^{(\beta+d)/\alpha}}\Gamma\left(\frac{\beta+d}{\alpha}\right)$ on the right-hand side of \eqref{any}
is easily obtained as follows
$$\big|\int_{\mathbb{R}^d} e^{-t|\xi|^\alpha}e^{i \xi x}|\xi|^{\beta}d\xi\big|\leq \int_{\mathbb{R}^d} e^{-t|\xi|^\alpha}|\xi|^{\beta}d\xi=\frac{C}{t^{(\beta+d)/\alpha}}\Gamma\left(\frac{\beta+d}{\alpha}\right).$$
Similarly, the bound $\frac{C}{t^{(|\boldsymbol{\beta}|+d)/\alpha}}\Gamma\left(\frac{|\boldsymbol{\beta}|+d}{\alpha}\right)$ on the right-hand side of \eqref{int} holds because 	
$$\big|\int_{\mathbb{R}^d} e^{-t|\xi|^\alpha}e^{i \xi x}\xi^{\boldsymbol{\beta}}d\xi \big|\leq \int_{\mathbb{R}^d} e^{-t|\xi|^\alpha}|\xi|^{\boldsymbol{|\beta|}}d\xi = \frac{C}{t^{(|\boldsymbol{\beta}|+d)/\alpha}}\Gamma\left(\frac{|\boldsymbol{\beta}|+d}{\alpha}\right).$$
	
We shall use the technique of contour integrals to obtain the first bounds in \eqref{any} and \eqref{int}, respectively. To simplify the calculation, without loss of generality, by rotating the coordinates, we assume that $x=(\frac{|x|}{\sqrt{d}},\frac{|x|}{\sqrt{d}},\ldots,\frac{|x|}{\sqrt{d}})$.

For any point $\xi=(\xi_1,\xi_2,\ldots,\xi_d)$ and for any $j\in\{1,2,\ldots,d\}$, we consider $\xi_j$ as a complex number with modulus $\eta_j$ and argument (angle) $\psi_j$.
 For a large $R>0$  and $\phi:=\min\{\pi/16, \pi/(16\alpha)\}$, consider the regions in the complex plane:
\begin{align*}
\Gamma_R^{(1)}&=\left\{\eta_0 e^{i\psi}\big|\ \eta_0\in(0,R),\psi\in\left[0,\phi\right]\right\},\\
\Gamma_R^{(2)}&=\left\{\eta_0 e^{i\psi}\big|\ \eta_0\in(0,R),\psi\in\left[\pi-\phi,\pi\right]\right\},
\end{align*}
and denote
$$
C_R^{(1)}=\left\{R e^{i\psi}\big|\ \psi\in \left[0,\phi\right]\right\}\ \text{and}\ C_R^{(2)}=\left\{R e^{i\psi}\big|\ \psi\in \left[\pi-\phi,\pi\right]\right\}.
$$

We calculate the contour integrals of the functions $e^{-t|\xi|^\alpha}e^{i \xi x}|\xi|^{\beta}$ and $e^{-t|\xi|^\alpha}e^{i \xi x}\xi^{\boldsymbol{\beta}}$ on the boundaries of the sectors $\Gamma_R^{(1)}$ and $\Gamma_R^{(2)}$. For the term $|\xi|^a$ in the above two functions, where $a=\alpha$ or $\beta$, we extend it to be a holomorphic function
$$
\left(\sum_{k=1}^d\xi_k^2\right)^{a/2} \quad\text{in}\,\, \mathbb{C}^d,
$$
which needs to be specified by choosing suitable branches.
	On one hand, when $\text{Re}(\xi_j)>0$, we select the branch so that the function $w=z^{a/2}$ maps the sector with angles $[0,2\phi]$ to the sector with angles $[0,a\phi]$.
	On the other hand, when $\text{Re}(\xi_j)<0$, we make the function $w=z^{a/2}$ map the sector with angles $[-2\phi,0]$ to the sector with angles $[-a\phi,0]$.
	
The main idea is to use the contour integrals to equate the integrals on the rays $\psi_j=0,\pi$ and the integrals on the rays $\psi_j=\phi, \pi-\phi$, respectively.
The following are some preliminary calculations on the rays $\psi_j=\frac{\pi}{2}-\sgn( \text{Re}(\xi_j)) \left(\frac{\pi}{2}-\phi\right)$ and the arcs $C_R^{(1)}$ or $C_R^{(2)}$, respectively.  Here $\sgn(\cdot)$  is the sign function.
	
First, we consider the case when $\xi_j$'s are on  the rays $\psi_j=\frac{\pi}{2}-\sgn( \text{Re}(\xi_j)) \left(\frac{\pi}{2}-\phi\right)$, where we can write $\xi_j=\eta_j\exp\left(\frac{\pi i}{2}-\sgn( \text{Re}(\xi_j))\left(\frac{\pi}{2}-\phi\right)i\right)$ with $\eta_j\in [0,R]$. In this case, for any fixed $\xi_k\in \Gamma_R^{(1)}\cup \Gamma_R^{(2)}$, where $k\in \{1,2,\ldots,d\}$, we have
	\be\label{1stf}
\left(\sum_{k=1}^d\xi_k^2\right)^{a/2}=\left(e^{ {2\sgn(\text{Re}(\xi_j))i\pi\phi}} \eta_j^2+\sum_{k\neq j} \xi_k^2 \right)^{a/2},
\ee
where $a=\alpha$ or $\beta$, and
\be\label{2ndf}
e^{i\xi x}=\exp\left( i  \exp\left(\frac{\pi i}{2}-\sgn( \text{Re}(\xi_j))\left(\frac{\pi}{2}-\phi\right)i\right)\eta_j \frac{|x|}{\sqrt d}+\sum\limits_{	k\neq j} i\xi_k \frac{|x|}{\sqrt{d}}\right).
\ee
Notice that if $\psi_k=\frac{\pi}{2}-\sgn( \text{Re}(\xi_k)) \left(\frac{\pi}{2}-\phi\right)$ for all $k\in\{1,2,\ldots,d\}$, it holds that
\be\label{1stf2}
\left(\sum\limits_{k=1}^d\xi_k^2\right)^{{a}/{2}}= \left(\sum\limits_{k=1}^d \eta_k^2 e^{2\sgn(\text{Re}(\xi_k))i\pi\phi}\right)^{{a}/{2}}
\ee
and
\be\label{2ndf2}
e^{i\xi x}=\exp\left( i\sum\limits_{k=1}^d   \exp\left(\frac{\pi i}{2}-\sgn( \text{Re}(\xi_k))\left(\frac{\pi}{2}-\phi\right)i\right)\eta_k \frac{|x|}{\sqrt d}\right).
\ee

Next, we treat the case when $\xi_j$ is on the arc $C_R^{(1)}$ or $C_R^{(2)}$, respectively.

By the definition of the regions $\Gamma_R^{(1)}$ and $\Gamma_R^{(2)}$, for any fixed $\xi_k\in \Gamma_R^{(1)}\cup \Gamma_R^{(2)}$, where $k\neq j$ and $\psi_j\in[0,\phi]\cup [\pi-\phi,\pi]$,  the angle between
$R^2 e^{2i\psi_j}$ and $\sum_{k\neq j}\xi_k^2$
is less than $\pi/2$, so we have
\be\label{es1}\Big|R^2 e^{2i\psi_j}+\sum_{k\neq j}\xi_k^2\Big|\geq |R^2 e^{2i\psi_j}|.
\ee
 Moreover, since $|\arg(\xi_k^2)|\leq 2\phi$ for any $k\neq j$, where $\arg(\cdot)$ is the argument (angle), it follows that
$$
\Big|\arg\Big(R^2 e^{2i \psi_j}+\sum_{k\neq j}\xi_k^2\Big)\Big|\leq 2\phi.
$$
This together with \eqref{es1} implies that
	\be\label{fin}
R^\alpha \cos\left(\alpha \phi\right)  \leq \text{Re} \Big(R^2e^{2i\psi_j}+\sum_{			k\neq j} \xi_k^2\Big)^{{\alpha}/{2}}.
\ee

Now we show that the integral of $ e^{-t\left(\sum_{k=1}^d\xi_k^2\right)^{{\alpha}/{2}}}e^{i \xi x}\left(\sum_{k=1}^d\xi_k^2\right)^{{\beta}/{2}}$ on the arc $C_R^{(1)}$ or $C_R^{(2)}$ tends to $0$ as $R$ tends to infinity.

On the arc $C_R^{(1)}$, we can write $\xi_j=R e^{i \psi_j}$, where $\psi_j\in [0,\phi]$.
By \eqref{1stf}, \eqref{2ndf}, and \eqref{fin}, we have
\be\label{3rdef}
	\begin{aligned}
	&\lim\limits_{R\to \infty}\left|\int_{C_R^{(1)}} e^{-t\left(\sum_{k=1}^d\xi_k^2\right)^{\alpha/2}}e^{i \xi x}\left(\sum_{k=1}^d\xi_k^2\right)^{{\beta}/{2}}d\xi_j\right|\\
	&\leq \lim_{R\to \infty} \int_0^{\phi} \left|\exp\left(-t(R^2 e^{2i\psi_j}+\sum_{
			k\neq j}\xi_k^2)^{\alpha/2}\right)\right| \left|\exp\left( iR e^{i\psi_j}\frac{|x|}{\sqrt{d}}+\sum_{k\neq j} i\xi_k\frac{|x|}{\sqrt{d}}\right)\right|\\
	&\quad \times  \left|\left(R^2 e^{2i\psi_j}+\sum_{k\neq j}\xi_k^2\right)^{\beta/2}\right|
	 \left|iR e^{i\psi_j}\right|d\psi_j\\
	&\leq C  \lim\limits_{R\to \infty} \int_0^{\phi}  e^{-tR^\alpha \cos\left(\alpha\phi\right)} \left(R^{\beta}+\left(\sum_{k\neq j}|\xi_k|^2\right)^{{\beta}/{2}}\right)R\, d\psi_j=0
	\end{aligned}
\ee
for any fixed  $\xi_k\in \Gamma_R^{(1)}\cup \Gamma_R^{(2)}$, where $k\ne j$.

Similarly, on the arc $ C_R^{(2)}$, where $\xi_j=R e^{i \psi_j}$ and $\psi_j\in [\pi-\phi,\pi]$, we have
	\be\label{3rdef2}
	\lim_{R\to \infty}\left|\int_{C_R^{(2)}} e^{-t\left(\sum_{k=1}^d\xi_k^2\right)^{{\alpha}/{2}}}e^{i \xi x}\left(\sum_{k=1}^d\xi_k^2\right)^{{\beta}/{2}}d\xi_j\right|=0
\ee
for any fixed  $\xi_k\in \Gamma_R^{(1)}\cup \Gamma_R^{(2)}$, where $k\neq j$.

Combining \eqref{3rdef} and \eqref{3rdef2} implies that we can apply contour integral to $\xi_j$ if $\xi_k\in \Gamma_R^{(1)}\cup \Gamma_R^{(2)}$ for all $k\neq j$. Therefore, by \eqref{1stf}, \eqref{2ndf}, \eqref{1stf2},  \eqref{2ndf2}, \eqref{3rdef}, and \eqref{3rdef2}, using $d$ times of contour integrals, we infer that
\be\label{int3}
	\begin{aligned}
&\int_{\mathbb{R}^d} e^{-t|\xi|^\alpha}e^{i \xi x}|\xi|^{\beta}\,d\xi\\
&=\sum\limits_{\sgn(\xi_1)=\pm 1}\int_{\mathbb{R}^{d-1}}\int_{0}^\infty \exp\left(-t\left(e^{2i\sgn(\xi_1)\phi} \eta_1^2+\sum\limits_{k=2 }^d \xi_k^2 \right)^{\alpha/2}\right)\\
&\quad \times \exp\left( i  \exp\left(\frac{\pi i}{2}-\sgn( \text{Re}(\xi_1))\left(\frac{\pi}{2}-\phi\right)i\right)\eta_1 \frac{|x|}{\sqrt d}+\sum\limits_{	k=2}^d i\xi_k \frac{|x|}{\sqrt{d}}\right)\\
&\quad \times		\left(e^{2i\sgn(\xi_1)\phi} \eta_1^2+\sum\limits_{k=2 }^d \xi_k^2 \right)^{\beta/2} \exp\left(\frac{\pi i}{2}-\sgn( \text{Re}(\xi_1))\left(\frac{\pi}{2}-\phi\right)i\right)\,d\eta_1 d\xi_2\cdots d\xi_{d}\\
&=\cdots= \sum\limits_{\sgn(\xi_1)=\pm 1}\cdots \sum\limits_{\sgn(\xi_d)=\pm 1}\int_{\mathbb{R}_1^{d}} \exp\left(-t\left(\sum\limits_{k=1}^d e^{2i\sgn(\xi_k)\phi} \eta_k^2\right)^{\alpha/2}\right)\\
&\quad\times \exp\left( i\sum\limits_{k=1}^d   \exp\left(\frac{\pi i}{2}-\sgn( \text{Re}(\xi_k))\left(\frac{\pi}{2}-\phi\right)i\right)\eta_k \frac{|x|}{\sqrt d}\right)\\
&\quad \times		\left(\sum\limits_{k=1}^de^{2i\sgn(\xi_k)\phi} \eta_k^2 \right)^{\beta/2}  \prod_{k=1}^d \exp\left(\frac{\pi i}{2}-\sgn( \text{Re}(\xi_k))\left(\frac{\pi}{2}-\phi\right)i\right)\,d\eta,
\end{aligned}
\ee
where $\mathbb{R}^d_1$ stands for the first quadrant of $\mathbb{R}^d$ and $d\eta=d\eta_1d\eta_2\cdots d\eta_d$. Plugging $$\text{Re}\left(\sum\limits_{k=1}^d e^{2i\sgn(\xi_k)\phi} \eta_k^2\right)^{\alpha/2}\geq |\eta|^\alpha \cos\left(\alpha\phi\right),$$
and
$$
\left|\exp\left( i\sum\limits_{k=1}^d  \exp\left(\frac{\pi i}{2}-\sgn( \text{Re}(\xi_k))\left(\frac{\pi}{2}-\phi\right)i\right)\eta_k \frac{|x|}{\sqrt d}\right)
\right|
=\exp\left(-\sum\limits_{k=1}^d\sin(\phi)\eta_k\frac{|x|}{\sqrt{d}}\right)$$
 into \eqref{int3}, we have
		\be\label{11}
	\begin{aligned}
		&\big|\int_{\mathbb{R}^d} e^{-t|\xi|^\alpha}e^{i \xi x}|\xi|^{\beta}d\xi \big|\leq 2^d \int_{\mathbb{R}_1^{d}} e^{-t|\eta|^\alpha \cos(\alpha\phi)}e^{-\sum\limits_{k=1}^d\sin(\phi)\eta_k|x|/\sqrt{d}}|\eta|^\beta  d\eta\\
		&\leq C\int_{\mathbb{R}^d_1} e^{-t|\eta|^\alpha \cos(\alpha\phi)}e^{- \sum\limits_{k=1}^d \sin(\phi)\eta_k|x|/\sqrt{d}} \sum\limits_{k=1}^d \eta_k^{\beta}d\eta \\
		&\le C\sum\limits_{k=1}^d \int_{0}^{\infty} e^{-t|\eta_k|^\alpha \cos(\alpha\phi) }e^{-  \sin(\phi)\eta_k|x|/\sqrt{d}} \eta_k^{\beta}d\eta_k  \prod\limits_{i\neq k} \int_{\mathbb{R}^{d-1}_1}  e^{-  \sin(\phi)\eta_i|x|/\sqrt{d}} d\eta_i          \\
		&\leq  \frac{C}{|x|^{d-1}} \int_{0}^\infty e^{-t\rho^\alpha \cos(\alpha\phi)}e^{- \sin(\phi)\rho|x|/\sqrt{d}} \rho^{\beta}d\rho=\frac{C}{|x|^{d-1}}\times I,
	\end{aligned}
	\ee
where $$I=\int_{0}^\infty e^{-t\rho^\alpha \cos(\alpha\phi)}e^{-\rho|x|/\sqrt{d}} \rho^{\beta}d\rho\leq \int_{0}^\infty e^{-\rho|x|/\sqrt{d}} \rho^{\beta}d\rho\leq\frac{C^{\beta}}{|x|^{\beta+1}}\Gamma(\beta+1).$$\\
	Therefore, we infer that
\be\label{heatpart1}
\begin{aligned}
	&\big|\int_{\mathbb{R}^d} e^{-t|\xi|^\alpha}e^{i \xi x}|\xi|^{\beta}d\xi \big|\leq  \frac{C_1 C_2^\beta \beta^\beta}{|x|^{\beta+d}}
\end{aligned}
\ee
for some constants $C_1$ and $C_2$, which is the first part on the right-hand side of \eqref{any}.

Finally, we prove \eqref{int}, which is a consequence of the following Claim.
\begin{claim}\label{remark}
For any $\boldsymbol{\beta}=(\beta_1,\ldots,\beta_d)$, where $\beta_i$ is a nonnegative integer,
there exists a constant $C>0$ such that
	 $$
	 \begin{aligned}
	 	&\big|\int_{\mathbb{R}^d} e^{-t|\xi|^\alpha}e^{i \xi x} \xi^{\boldsymbol{\beta}}d\xi \big|\leq C^{|\boldsymbol{\beta}|+\alpha+1}\frac{(\alpha+|\boldsymbol{\beta}|)^{\alpha+|\boldsymbol{\beta}|} t}{|x|^{\alpha+|\boldsymbol{\beta}|+d}}.
	 \end{aligned}
	 $$
\end{claim}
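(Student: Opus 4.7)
The plan is to recover the factor of $t$ in the bound by writing
$F(t,x):=\int_{\mathbb{R}^d}e^{-t|\xi|^\alpha}e^{i\xi x}\xi^{\boldsymbol{\beta}}\,d\xi$
as the time integral of $\partial_s F$ and bounding $\partial_s F$ uniformly in $s$ via the same contour-rotation technique used to prove Lemma \ref{main}(a). Differentiation under the integral sign gives, for $s>0$,
\begin{equation*}
\partial_s F(s,x)=-\int_{\mathbb{R}^d}|\xi|^\alpha e^{-s|\xi|^\alpha}e^{i\xi x}\xi^{\boldsymbol{\beta}}\,d\xi.
\end{equation*}
The prefactor $|\xi|^\alpha\xi^{\boldsymbol{\beta}}$ is the product of $|\xi|^\alpha$ (with the branches fixed exactly as in the proof of Lemma \ref{main}(a)) and the entire monomial $\xi^{\boldsymbol{\beta}}=\xi_1^{\beta_1}\cdots\xi_d^{\beta_d}$, and so has the holomorphic extension needed for the contour shift. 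Rotating each $\xi_k$-contour to the ray $\psi_k=\frac{\pi}{2}-\sgn(\text{Re}(\xi_k))(\frac{\pi}{2}-\phi)$ as in \eqref{int3}, the modulus of the prefactor on the deformed contour is $|\eta|^\alpha\prod_k\eta_k^{\beta_k}\le|\eta|^{\alpha+|\boldsymbol{\beta}|}$, so repeating the computation \eqref{11} with $\beta$ replaced by $\alpha+|\boldsymbol{\beta}|$ yields the uniform bound
\begin{equation*}
|\partial_s F(s,x)|\le\frac{C_1 C_2^{\alpha+|\boldsymbol{\beta}|}(\alpha+|\boldsymbol{\beta}|)^{\alpha+|\boldsymbol{\beta}|}}{|x|^{\alpha+|\boldsymbol{\beta}|+d}}=:M, \qquad s>0,\ x\ne 0.
\end{equation*}

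For any $0<s_0<t$ the fundamental theorem of calculus gives $|F(t,x)-F(s_0,x)|\le M(t-s_0)$, so the desired bound $|F(t,x)|\le Mt$ will follow once we verify that $F(s_0,x)\to 0$ as $s_0\to 0^+$ for every $x\ne 0$. I would establish this by identifying $F(s,x)$, up to the constant in \eqref{de}, with $(-i)^{|\boldsymbol{\beta}|}\partial_x^{\boldsymbol{\beta}}p_\alpha(s,x)$, invoking the sharp bound $p_\alpha(s,y)\le C s/(s^{1/\alpha}+|y|)^{d+\alpha}$ from \eqref{1st} to conclude that $p_\alpha(s,\cdot)\to 0$ uniformly on compact subsets of $\mathbb{R}^d\setminus\{0\}$, and then transferring this decay to every spatial derivative of $p_\alpha$ at a fixed $x\ne 0$ by standard interior regularity for the fractional heat equation. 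Absorbing $C_1 C_2^{\alpha+|\boldsymbol{\beta}|}$ into $C^{|\boldsymbol{\beta}|+\alpha+1}$ then produces the stated inequality.

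The main technical obstacle is precisely this $s\to 0^+$ verification: the contour bound from Lemma \ref{main}(a) applied directly to $F(s,x)$ is $s$-independent and only yields $|F(s,x)|\lesssim|x|^{-(|\boldsymbol{\beta}|+d)}$, which does not shrink to zero. Conceptually, extracting the extra factor of $t$ in Claim \ref{remark} is equivalent to the distributional statement that $\int_{\mathbb{R}^d}e^{i\xi x}\xi^{\boldsymbol{\beta}}\,d\xi$ is a derivative of $\delta(x)$ and hence vanishes for $x\ne 0$; converting this into pointwise convergence for $x\ne 0$ is exactly what the sharp $s$-decay of $p_\alpha$ built into \eqref{1st} ultimately supplies.
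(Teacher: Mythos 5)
Your contour estimate for $\partial_s F$ is fine and does reproduce the constants $C_1C_2^{\alpha+|\boldsymbol{\beta}|}(\alpha+|\boldsymbol{\beta}|)^{\alpha+|\boldsymbol{\beta}|}|x|^{-(\alpha+|\boldsymbol{\beta}|+d)}$, and the reduction of the Claim to the statement $F(s_0,x)\to 0$ as $s_0\to 0^+$ (for fixed $x\neq 0$) via the fundamental theorem of calculus is correct. The genuine gap is precisely at that remaining step, and the justification you sketch does not close it. Interior regularity for the fractional heat equation is not local: the available estimates (e.g.\ the Schauder-type estimates of the kind quoted in \eqref{fi1}, or Lemmas \ref{33}--\ref{34}) control derivatives of $u$ at $(s,x)$ in terms of a cylinder of parabolic height at most $s$ \emph{plus} a global tail norm such as $\sup_t\int |u(t,y)|\,(1+|y|^{d+\alpha})^{-1}dy$. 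For $u=p_\alpha(s,\cdot)$ this tail norm does not tend to zero as $s\to 0$ (the kernel has unit mass concentrating at the origin), and the admissible cylinders shrink, so the constants degenerate like negative powers of $s$. Hence smallness of $p_\alpha(s,\cdot)$ on a compact set away from the origin does not transfer to smallness of $\partial_x^{\boldsymbol{\beta}}p_\alpha(s,x)$ by ``standard interior regularity''; indeed the analogous statement for the \emph{time} derivative is false ($\partial_s p_\alpha(s,x)\to c\,|x|^{-d-\alpha}\neq 0$), which shows that some genuinely quantitative input beyond local smallness is needed. As written, all you can conclude from your uniform bounds is $|F(t,x)|\le C|x|^{-(|\boldsymbol{\beta}|+d)}$, without the factor $t$.

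For comparison, the paper avoids this issue entirely: it integrates by parts once in $\xi_1$ (after rotating so that $x=(|x|/\sqrt d,\ldots,|x|/\sqrt d)$), so that the term in which the derivative falls on $e^{-t|\xi|^\alpha}$ carries the factor $t$ explicitly and is bounded by the already-proved estimate \eqref{heatpart1}, while the term in which the derivative lowers the degree of $\xi^{\boldsymbol{\beta}}$ is handled by induction on $|\boldsymbol{\beta}|$ (the base case $|\boldsymbol{\beta}|=0$ produces the factor $t$ outright). You could repair your argument either by adopting that one-step integration by parts, or by proving $F(s_0,x)\to 0$ honestly (e.g.\ by repeated integration by parts in $\xi$ against $e^{i\xi x}$, taking care of the singularity of derivatives of $|\xi|^\alpha$ at $\xi=0$, or by a Duhamel-type localization), but the appeal to interior regularity as stated does not suffice.
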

We prove this claim by induction. When $|\boldsymbol{\beta}|=0$, by integration by parts with respect to $\xi_1$,  we  see that
$$
\big|\int_{\mathbb{R}^d} e^{-t|\xi|^\alpha}e^{i \xi x}d\xi\big|=\frac{\alpha \sqrt{d}t}{|x|}
\big|\int_{\mathbb{R}^d} e^{-t|\xi|^\alpha}\frac{\xi_1}{i|\xi|^{2-\alpha}}e^{i \xi x}d\xi\big|.
$$
Then using the method of contour integrals similarly to \eqref{11}, we find that
 $$\big|\int_{\mathbb{R}^d} e^{-t|\xi|^\alpha}\frac{\xi_1}{i|\xi|^{2-\alpha}}e^{i \xi x}d\xi\big|\leq \frac{C}{|x|^{\alpha+d-1}},$$ which implies
$$
\big|\int_{\mathbb{R}^d} e^{-t|\xi|^\alpha}e^{i \xi x}d\xi\big|\leq \frac{Ct}{|x|^{\alpha+d}}.
$$
Without loss of generality, we assume that $\beta_1>0$. For any positive integer $k$, we assume that Claim \ref{remark} is true for any $|\boldsymbol{\beta}|<k$.  When $|\boldsymbol{\beta}|=k$, by integration by parts with respect to $\xi_1$, the induction assumption and \eqref{heatpart1}, it holds that
$$
\begin{aligned}
	& \big|\int_{\mathbb{R}^d} e^{-t|\xi|^\alpha}e^{i \xi x} \xi^{\boldsymbol{\beta}}d\xi \big|\\
	&\leq \big|\frac{\sqrt{d}}{|x|}\int_{\mathbb{R}^d} e^{-t|\xi|^\alpha}e^{i \xi x}\frac{ \beta_1}{i \xi_1} \xi^{\boldsymbol{\beta}} d\xi \big|+\frac{t\alpha\sqrt{d}}{|x|}\big|\int_{\mathbb{R}^d} e^{-t|\xi|^\alpha}e^{i \xi x} \frac{\xi_1}{|\xi|^{2-\alpha}}\xi^{\boldsymbol{\beta}}d\xi \big|\\
	&\leq  \frac{\sqrt{d}}{|x|}C^{\alpha+|\boldsymbol{\beta}|-1}\frac{(\alpha+|\boldsymbol{\beta}|-1)^{\alpha+|\boldsymbol{\beta}|-1}t}{|x|^{\alpha+|\boldsymbol{\beta}|-1+d}}+\frac{t\alpha\sqrt{d}}{|x|}\frac{C_1 C_2^{\alpha+|\boldsymbol{\beta}|-1}(\alpha+|\boldsymbol{\beta}|-1)^{\alpha+|\boldsymbol{\beta}|-1}}{|x|^{\alpha+|\boldsymbol{\beta}|+d-1}}\\
	&\leq C^{\alpha+|\boldsymbol{\beta}|+1}\frac{(\alpha+|\boldsymbol{\beta}|)^{\alpha+|\boldsymbol{\beta}|} t}{|x|^{\alpha+|\boldsymbol{\beta}|+d}}.
\end{aligned}
$$
Thus, we finished the proof of Claim \ref{remark} and therefore completed the proof of Lemma \ref{main}.\\
\end{proof}

Now we are ready to embark on the proof of Theorem \ref{theo02}.
\begin{proof}
By \eqref{de}, the heat kernel $p_\alpha(t,x)$ of the fractional heat equation \eqref{PDE3} satisfies
 $$|\p_t^k p_\alpha(t,x)|=C(d,\alpha) \big|\int_{\mathbb{R}^d} e^{-t|\xi|^\alpha}e^{i \xi x}|\xi|^{\alpha k}d\xi \big|,$$
  which implies \eqref{time}  by part (a) of Lemma \ref{main}. From the first bound $\frac{C_1 C_2^{k\alpha} (k\alpha)^{k\alpha}}{|x|^{k\alpha+d}}$ in \eqref{time}, we see that $p_\alpha$ is of Gevrey class in time of order $\alpha$ when $x\neq 0$. By the second bound $\frac{C}{t^{k+d/\alpha}}\Gamma\left(\frac{k\alpha+d}{\alpha}\right)$ in \eqref{time}, $p_\alpha$ is analytic in time when $t> 0$.

Furthermore,  for any positive integer $k$, by \eqref{de}, we have
$$
|\p_x^{k} p_\alpha(t,x)|\leq C(d,\alpha) \sum\limits_{|\boldsymbol{k}|=k}|\p_x^{\boldsymbol{k}} p_\alpha(t,x)|=C(d,\alpha) \sum\limits_{|\boldsymbol{k}|=k} \big|\int_{\mathbb{R}^d} e^{-t|\xi|^\alpha}e^{i \xi x}\xi^{\boldsymbol{k}} d\xi \big|,
$$ where $\boldsymbol{k}=(k_1,\ldots,k_d)$, $\xi^{\boldsymbol{k}}=\xi_1^{k_1}\ldots \xi_d^{k_d}$, and we sum over all the $\boldsymbol{k}$ satisfying $|\boldsymbol{k}|=k$. By \eqref{x} and the fact that we have $\binom{k+d-1}{d-1}$ choices of $\boldsymbol{k}$ satisfying $|\boldsymbol{k}|=k$, we infer that
$$
|\p_x^{k} p_\alpha(t,x)|\leq C(d,\alpha) \binom{k+d-1}{d-1}\min\left\{\frac{C_1 C_2^{\alpha+k} (\alpha+k)^{\alpha+k} t}{|x|^{\alpha+k+d}},\frac{C}{t^{(k+d)/\alpha}}\Gamma\left(\frac{k+d}{\alpha}\right)\right\},
$$
which implies \eqref{x} for a sufficiently large constant $C_2$. By the bound $\frac{C_1 C_2^{k+\alpha} (k+\alpha)^{k+\alpha} t}{|x|^{\alpha+k+d}}$ in  \eqref{x}, $p_\alpha$ is analytic in space at $|x|\neq 0$. By the other bound
$\frac{C}{t^{(k+d)/\alpha}}\Gamma\left(\frac{k+d}{\alpha}\right)$ in \eqref{x}, $p_\alpha$ is of Gevrey class with order $1/\alpha$ in space when $t>0$ for any $x\in \mathbb{R}^d$.
\end{proof}

\begin{remark}
	Theorem \ref{theo02} is consistent with the fact that the heat kernel of the heat equation $\p_tu-\Delta u=0$ is of Gevrey class of order 2 at $t=0$. Besides, when $\alpha=1$, it is well known that $p_1(t,x)=\frac{Ct}{\left(t^2+|x|^2\right)^{(d+1)/2}}$. By a direct computation, we see that $p_1(t,x)$ satisfies  all the results in Theorem \ref{theo02}.
\end{remark}

We end this section by proving Corollary \ref{add2}.
\begin{proof}
	By Theorem \ref{theo01} and the growth condition \eqref{grow}, we know that there is an unique solution to \eqref{PDE3}:
	$$u(t,x)=\int_{\mathbb{R}^d}p_\alpha(t,x-y)u(0,y)\, dy.$$
	Therefore, by \eqref{x} and \eqref{grow}, we infer that
	$$
	\begin{aligned}
	&|\p_x^k u(t,x)|\leq \int_{\mathbb{R}^d}|\p_x^k p_\alpha(t,x-y)||u(0,y)|\, dy\\
	&\leq \int_{B_1(x)}\frac{C}{t^{(k+d)/\alpha}}\Gamma\left(\frac{k+d}{\alpha}\right) C_1(1+|y|^{\alpha-\epsilon})\,dy\\
	&\quad+\int_{\mathbb{R}^d\backslash B_1(x)}\frac{C_1 C_2^{k+\alpha} (k+\alpha)^{k+\alpha} t}{|x-y|^{\alpha+k+d}}C_1(1+|y|^{\alpha-\epsilon})\,dy\\
	&\leq \frac{C(1+|x|^{\alpha-\epsilon})}{t^{(k+d)/\alpha}}\Gamma\left(\frac{k+d}{\alpha}\right)+\int_{\mathbb{R}^d\backslash B_1(x)}\frac{C^{k+\alpha+1} (k+\alpha)^{k+\alpha} t}{|x-y|^{\alpha+d}}(1+|x|^{\alpha-\epsilon}+|x-y|^{\alpha-\epsilon})\,dy\\
	&\leq \frac{C(1+|x|^{\alpha-\epsilon})}{t^{(k+d)/\alpha}}\Gamma\left(\frac{k+d}{\alpha}\right)+ C^{k+\alpha+2}(k+\alpha)^{k+\alpha}(1+|x|^{\alpha-\epsilon})t,
	\end{aligned}
	$$
	which implies that $u$ is analytic in space when $\alpha\in[1,2)$ and $u$ is of Gevrey class of order $1/\alpha$ in space when $\alpha\in (0,1)$.
\end{proof}

\section{Fractional heat equation on a manifold}\label{4thsec}
In this section, we prove Theorems \ref{theo03} and \ref{theo04} in the setting of $\mathrm{M}$, which is a $d-$dimensional, complete Riemannian manifold.

First we recall a well known lemma.
\begin{lemma}
	 Assume that Condition \eqref{cond2} is satisfied.
	Then for any $D>0$, $\beta\geq 0$, and $t>0$, there exists a positive constant $C$ such that
	\be\label{imp1}
	\int_{\mathrm{M}}\frac{e^{-\frac{Dd(x,y)^2}{t}}}{|B(x,\sqrt{t})|}d(x,y)^{\beta}\,dy\leq Ct^{\beta/2}.
	\ee
\end{lemma}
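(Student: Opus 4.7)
The plan is to decompose $\mathrm{M}$ into the central ball $B(x,\sqrt{t})$ together with the dyadic annuli
$$A_k := \bigl\{y \in \mathrm{M} : 2^{k-1}\sqrt{t} \leq d(x,y) < 2^k \sqrt{t}\bigr\}, \qquad k \geq 1,$$
and then estimate the integral on each piece separately. On the central ball, $d(x,y)^\beta \leq t^{\beta/2}$, $e^{-Dd(x,y)^2/t} \leq 1$, and the measure of the ball exactly cancels the weight $1/|B(x,\sqrt{t})|$, yielding a contribution of order $t^{\beta/2}$. On each annulus $A_k$, I would bound $d(x,y)^\beta \leq (2^k\sqrt{t})^\beta = 2^{k\beta} t^{\beta/2}$, bound $e^{-Dd(x,y)^2/t} \leq e^{-D 4^{k-1}}$, and use $A_k \subset B(x, 2^k\sqrt{t})$.

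The volume doubling condition \eqref{cond2} iterated $k$ times gives $|B(x, 2^k\sqrt{t})| \leq (C^*)^k |B(x,\sqrt{t})|$, so
$$\int_{A_k} \frac{e^{-Dd(x,y)^2/t}}{|B(x,\sqrt{t})|} d(x,y)^\beta \, dy \leq (C^*)^k\, 2^{k\beta}\, t^{\beta/2}\, e^{-D 4^{k-1}}.$$
Summing in $k$ produces a convergent series, since the Gaussian factor $e^{-D4^{k-1}}$ defeats the polynomial-in-$k$ factor $(C^*)^k 2^{k\beta} = 2^{k(\log_2 C^* + \beta)}$. Thus the total is $\leq C t^{\beta/2}$ for a constant $C = C(D,\beta,C^*)$.

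The estimate is essentially routine once the dyadic decomposition is set up; no step is a serious obstacle. The only mild subtlety is verifying that the iterated doubling really does give the exponential-in-$k$ growth of volumes, but this follows directly from \eqref{cond2} applied $k$ times to the nested balls $B(x,\sqrt{t}) \subset B(x,2\sqrt{t}) \subset \cdots \subset B(x,2^k\sqrt{t})$. I would present the proof in two short displayed computations: one for the central ball and one for the sum over annuli, then combine them to obtain the bound $Ct^{\beta/2}$.
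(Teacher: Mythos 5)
Your proposal is correct and is essentially identical to the paper's proof: the same splitting into the central ball $B(x,\sqrt{t})$ plus dyadic annuli, with the iterated doubling bound $|B(x,2^k\sqrt{t})|\leq (C^*)^k|B(x,\sqrt{t})|$ and the Gaussian factor $e^{-D4^{k-1}}$ ensuring convergence of the sum. (Only a cosmetic quibble: the factor $(C^*)^k 2^{k\beta}$ is exponential in $k$, not polynomial, but as you note the Gaussian decay still dominates.)
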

\begin{proof}
	We give the proof for completeness.
	By Condition \eqref{cond2}, we have
	$$
	\begin{aligned}
	&\int_{\mathrm{M}}\frac{e^{-{Dd(x,y)^2}/{t}}}{|B(x,\sqrt{t})|}d(x,y)^{\beta}\,dy\\
	&=\int_{B(x,\sqrt{t})}\frac{e^{-{Dd(x,y)^2}/{t}}}{|B(x,\sqrt{t})|}d(x,y)^{\beta}\,dy+\int_{\mathrm{M}\backslash B(x,\sqrt{t})}\frac{e^{-{Dd(x,y)^2}/{t}}}{|B(x,\sqrt{t})|}d(x,y)^{\beta}\,dy\\
	&\leq Ct^{\beta/2}+\sum\limits_{k=1}^{\infty}\int_{2^{k-1}\sqrt{t}\leq d(x,y)\leq2^{k}\sqrt{t}}\frac{e^{-{Dd(x,y)^2}/{t}}}{|B(x,\sqrt{t})|}d(x,y)^{\beta}\,dy\\
	&\leq Ct^{\beta/2}+\sum\limits_{k=1}^{\infty}\frac{|B(x,2^k\sqrt{t})|}
	{|B(x,\sqrt{t})|}e^{-D(2^{k-1})^2}(2^k\sqrt{t})^{\beta}\\
	&\leq Ct^{\beta/2}+\sum\limits_{k=1}^{\infty}{C^*}^k e^{-D(2^{k-1})^2}(2^k\sqrt{t})^{\beta}\leq Ct^{\beta/2},
	\end{aligned}
	$$
	where $C^*$ is the constant in Condition \eqref{cond2}.
\end{proof}

 We are ready to prove Theorem \ref{theo03}.
\subsection{Proof of Theorem \ref{theo03}}
\begin{proof}
	It is well known that there is a connection between the heat kernel $E(t,x;y)$ and the fractional heat kernel $p_\alpha(t,x;y)$, which can be found, for instance, in \cite{[B]}, i.e.,
	$$
	\begin{aligned}
	& p_\alpha(t,x;y)=\int_0^\infty E(s,x;y)\eta_t(s)\,ds,
	\end{aligned}
	$$
	where $\eta_t(s)$ is a density function of $\mu_t^\alpha$ satisfying
	$$
	\eta_t(s)=t^{-2/\alpha}\eta_1(t^{{-2}/{\alpha}}s).
	$$
	Therefore,
	\be\label{connection}
	\begin{aligned}
		& p_\alpha(t,x;y)=\int_0^\infty E(s,x;y)t^{-2/\alpha}\eta_1(t^{-2/\alpha}s)\, ds=\int_0^\infty E(t^{2/\alpha}s,x;y)\eta_1(s)\, ds.
	\end{aligned}
	\ee
It is also known that there exists a constant $C$ such that
	\be\label{new2} 0\leq \eta_1(s)\leq Cs^{-1-\alpha/2}e^{-s^{-\alpha/2}},\ee
	which can be found, for instance, in Theorem 3.1 of \cite{[B]}, Theorem 37.1 in \cite{[D]}, or Lemma 1 of \cite{[H]}.

Then for any $t>0$, by \eqref{mild} and \eqref{connection}, it holds that
	\be\label{u0}u(t,x)=\int_{\mathrm{M}}\int_0^\infty E(t^{2/\alpha}s,x;y)\eta_{1}(s)u(0,y)\,dsdy.
	\ee
By Theorem 5.4.12 of \cite{[Saloff]}, Conditions \eqref{cond1} and \eqref{cond2} imply that there exist constants $C$, $d_1$, $d_2$, $D_1$, and $D_2$ such that
\be\label{hea}
\frac{ d_1e^{-D_1 d(x,y)^2/t}}{|B(x,\sqrt{t})|}\leq E(t,x;y)\leq \frac{ d_2e^{-D_2 d(x,y)^2/t}}{|B(x,\sqrt{t})|},
\ee
and
\be\label{Et}
|\p_t E(t,x;y)|\leq \frac{C}{t}\frac{ e^{-D_2 d(x,y)^2/t}}{|B(x,\sqrt{t})|}.
\ee
From \eqref{grow3}, \eqref{u0}, \eqref{hea}, \eqref{imp1}, and \eqref{new2}, we infer that
$$
\begin{aligned}
|u(t,x)|&\leq \int_{\mathrm{M}}\int_0^\infty |E(t^{2/\alpha}s,x;y)|\eta_{1}(s)|u(0,y)|\,dsdy\\
&\leq C\int_{\mathrm{M}}\int_0^\infty \frac{ e^{-{D_2 d(x,y)^2}/(t^{2/\alpha}s)}}{|B(x,\sqrt{t^{2/\alpha}s})|}\eta_{1}(s)(1+d(x,0)^{\alpha-\epsilon}+d(x,y)^{\alpha-\epsilon})\,dsdy\\
&\leq C(1+d(x,0)^{\alpha-\epsilon})\int_0^\infty\eta_1(s)\,ds
+C\int_0^\infty\eta_1(s)(t^{2/\alpha}s)^{(\alpha-\epsilon)/2}\, ds\\
&\leq C(1+d(x,0)^{\alpha-\epsilon})\int_0^\infty\eta_1(s)\,ds+Ct^{\frac{\alpha-\epsilon}{\alpha}}\int_0^\infty s^{-1-\alpha/2}e^{-s^{-\alpha/2}}s^{(\alpha-\epsilon)/2}\, ds\\
&\leq C(1+d(x,0)^{\alpha-\epsilon})+Ct^{(\alpha-\epsilon)/\alpha}.
\end{aligned}
$$	

For any integer $k>0$, we proceed by induction. First, we assume it is true that
	\be\label{as}
	|\p_t^{k-1} u(t,x)|\leq \frac{C^{k} (k-1)^{k-1}}{t^{k-2}}\left(\frac{(1+d(x,0)^{\alpha-\epsilon}}{t}+\frac{1}{t^{\epsilon/\alpha}}\right).
	\ee

	Then for any $t>0$, by \eqref{mild} and \eqref{connection}, it holds that
	\be\label{u1}\p_t^k u(t,x;y)=\int_{\mathrm{M}}\int_0^\infty\p_t E((t-\tau)^{2/\alpha}s,x;y)\eta_{1}(s)\p_\tau^{k-1}u(\tau,y)\,dsdy,\ \forall \tau\in (0,t).
	\ee

	By \eqref{u1}, \eqref{as}, and \eqref{Et}, we have
	\be\label{im1}
	\begin{aligned}
		&|\p_t^k u(t,x;y)|\\
		&\leq \int_{\mathrm{M}}\int_0^\infty\frac{2s}{\alpha}(t-\tau)^{2/\alpha-1}\frac{C}{(t-\tau)^{2/\alpha}s}
\frac{ e^{{-D_2 d(x,y)^2}/((t-\tau)^{2/\alpha}s)}}{|B(x,\sqrt{(t-\tau)^{2/\alpha}s})|}
		\eta_{1}(s)|\p_t^{k-1}u(\tau,y)|\,dsdy\\
		&\leq \frac{C^{k+1/2}(k-1)^{k-1}}{\tau^{k-2}(t-\tau)}\int_{\mathrm{M}}\int_0^\infty\frac{ e^{{-D_2 d(x,y)^2}/((t-\tau)^{2/\alpha}s)}}{|B(x,\sqrt{(t-\tau)^{2/\alpha}s})|} \eta_{1}(s)\left(\frac{1+d(x,0)^{\alpha-\epsilon}}{\tau}+\frac{1}{\tau^{\epsilon/\alpha}}\right)\,dsdy\\
		&\quad+  \frac{C^{k+1/2}(k-1)^{k-1}}{\tau^{k-1}(t-\tau)}\int_{\mathrm{M}}\int_0^\infty\frac{ e^{-D_2 d(x,y)^2/((t-\tau)^{2/\alpha}s)}}{|B(x,\sqrt{(t-\tau)^{2/\alpha}s})|}
		\eta_{1}(s)d(x,y)^{\alpha-\epsilon}\,dsdy\\
		&:=I_1+I_2,
	\end{aligned}
	\ee
where we used the triangle inequality in the second inequality.
	By \eqref{imp1} and \eqref{new2}, we have
	\be\label{im2}
	\begin{aligned}
		I_1&= \frac{C^{k+1/2}(k-1)^{k-1}}{\tau^{k-2}(t-\tau)}
\left(\frac{1+d(x,0)^{\alpha-\epsilon}}{\tau}+\frac{1}{\tau^{\epsilon/\alpha}}\right)\int_0^\infty\int_{\mathrm{M}}\frac{ e^{{-D_2 d(x,y)^2}/((t-\tau)^{2/\alpha}s)}}{|B(x,\sqrt{(t-\tau)^{2/\alpha}s})|}
		\eta_{1}(s)\,dyds\\
		&\leq\frac{C^{k+3/4}(k-1)^{k-1}}{\tau^{k-2}(t-\tau)}\left(\frac{1+d(x,0)^{\alpha-\epsilon}}
	{\tau}+\frac{1}{\tau^{\epsilon/\alpha}}\right)\int_0^\infty \eta_1(s)\,ds\\
	&\leq \frac{C^{k+3/4}(k-1)^{k-1}}{\tau^{k-2}(t-\tau)}\left(\frac{1+d(x,0)^{\alpha-\epsilon}}
	{\tau}+\frac{1}{\tau^{\epsilon/\alpha}}\right),
	\end{aligned}
    \ee
	and
	\be\label{im3}
	\begin{aligned}
		I_2&= \frac{C^{k+1/2}(k-1)^{k-1}}{\tau^{k-1}(t-\tau)}\int_0^\infty\int_{\mathrm{M}}\frac{ e^{{-D_2 d(x,y)^2}/((t-\tau)^{2/\alpha}s)}}{|B(x,\sqrt{(t-\tau)^{2/\alpha}s})|}d(x,y)^{\alpha-\epsilon}\eta_{1}(s)\,dyds\\
		&\leq \frac{C^{k+3/4}(k-1)^{k-1}}{\tau^{k-1}(t-\tau)}\int_0^\infty \left((t-\tau)^{2/\alpha}s\right)^{(\alpha-\epsilon)/2} s^{-1-\alpha/2}e^{-s^{-\alpha/2}}\,ds\\
		&\leq \frac{C^{k+7/8}(k-1)^{k-1}}{\tau^{k-1}(t-\tau)^{\epsilon/\alpha}}.
	\end{aligned}
	\ee
	Now we set $\tau=\frac{(k-1)t}{k}$. Consequently, by plugging \eqref{im2} and \eqref{im3} into \eqref{im1}, we conclude that
	$$
	\begin{aligned}
	&|\p_t^k u(t,x;y)|\\
	&\leq \frac{C^{k+3/4}(k-1)^{k-1}}{\tau^{k-2}(t-\tau)}\left(\frac{1+d(x,0)^{\alpha-\epsilon}}{\tau}+\frac{1}{\tau^{\epsilon/\alpha}}\right)+\frac{C^{k+7/8}(k-1)^{k-1}}{\tau^{k-1}(t-\tau)^{\epsilon/\alpha}}\\
	&\leq \frac{C^{k+1} k^k}{t^{k-1}}\left(\frac{1+d(x,0)^{\alpha-\epsilon}}{t}+\frac{1}{t^{\epsilon/\alpha}}\right),
	\end{aligned}
	$$
	which gives \eqref{aim2} immediately.
\end{proof}

The proof of Theorem \ref{theo04} is divided into two parts: the proof of  \eqref{heatf} and the proof of \eqref{heat3}. We start with the first part in the following subsection.

\subsection{Proof of (\ref{heatf}) in Theorem \ref{theo04}}\label{sub}
\begin{proof}
	By Condition \eqref{cond2}, it is well known that when $r\leq s$,
	\be\label{volume}
	|B(x,r)|\geq\frac{1}{C^*}\left(\frac{r}{s}\right)^{\log_2{C^*}}|B(x,s)|.
	\ee
	See, for example, Remark 4.2.2 of \cite{[Zhangbook]}.
	
	Therefore, by \eqref{connection}, \eqref{hea}, \eqref{new2}, and \eqref{volume}, we have
	\be\label{he1}
	\begin{aligned}
	&p_\alpha(t,x;y)\\
	&\leq \int_{0}^1 \frac{Ce^{-{D_2d(x,y)^2}/(t^{2/\alpha}s)}}{|B(x,\sqrt{t^{2/\alpha}s})|}s^{-1-\alpha/2}e^{-s^{-\alpha/2}}\,ds+\int_{1}^\infty \frac{Ce^{-{D_2d(x,y)^2}/(t^{2/\alpha}s)}}{|B(x,\sqrt{t^{2/\alpha}s})|}s^{-1-\alpha/2}e^{-s^{-\alpha/2}}\,ds\\
	&=  \int_{0}^1 \frac{Ce^{-{D_2d(x,y)^2}/(t^{2/\alpha}s)}}{|B(x,t^{1/\alpha})|}\frac{|B(x,t^{1/\alpha})|}{|B(x,\sqrt{t^{2/\alpha}s})|}s^{-1-\alpha/2}e^{-s^{-\alpha/2}}\,ds\\
	&\quad+\int_{1}^\infty \frac{Ce^{-{D_2d(x,y)^2}/(t^{2/\alpha}s)}}{|B(x,\sqrt{t^{2/\alpha}s})|}s^{-1-\alpha/2}e^{-s^{-\alpha/2}}\,ds\\
	&\leq  \int_{0}^1 \frac{C}{|B(x,t^{1/\alpha})|}\frac{C^*}{s^{{\log_2{C^*}}/{2}}}s^{-1-\alpha/2}e^{-s^{-\alpha/2}}\,ds+\int_{1}^\infty \frac{C}{|B(x,t^{1/\alpha})|}s^{-1-\alpha/2}e^{-s^{-\alpha/2}}\,ds\\
	&\leq \frac{C}{|B(x,t^{1/\alpha})|}.
	\end{aligned}
	\ee
	 If $d(x,y)\geq t^{1/\alpha}$, letting $\xi=\frac{st^{2/\alpha}}{d(x,y)^2}$, again by \eqref{connection}, \eqref{hea}, \eqref{new2}, and \eqref{volume}, we get
	\be\label{he2}
	\begin{aligned}
	p_\alpha(t,x;y)&\leq \int_0^\infty \frac{Ce^{-{D_2}/{\xi}}}{|B(x,\sqrt{\xi}d(x,y))|}\left(\frac{d(x,y)^2\xi}{t^{2/\alpha}}\right)^{-1-\alpha/2}\frac{d(x,y)^2}{t^{2/\alpha}}d\xi\\
	&= \frac{Ct}{d(x,y)^\alpha}\int_{0}^1 \frac{e^{-{D_2}/{\xi}}}{|B(x,\sqrt{\xi}d(x,y))|}\xi^{-1-\alpha/2}d\xi\\
	&\quad+ \frac{Ct}{d(x,y)^\alpha}\int_{1}^\infty \frac{e^{-{D_2}/{\xi}}}{|B(x,\sqrt{\xi}d(x,y))|}\xi^{-1-\alpha/2}d\xi\\
	&\leq \frac{Ct}{d(x,y)^\alpha}\int_{0}^1 \frac{e^{-{D_2}/{\xi}}}{|B(x,d(x,y))|}\frac{|B(x,d(x,y))|}{|B(x,\sqrt{\xi}d(x,y))|}\xi^{-1-\alpha/2}d\xi\\
	&\quad+ \frac{Ct}{d(x,y)^\alpha}\int_{1}^\infty \frac{e^{-{D_2}/{\xi}}}{|B(x,d(x,y))|}\xi^{-1-\alpha/2}d\xi\\
	&\leq \frac{Ct}{d(x,y)^\alpha}\int_0^1 \frac{e^{-{D_2}/{\xi}}}{|B(x,d(x,y))|(\sqrt{\xi})^{\log_2{C^*}}}\xi^{-1-\alpha/2}d\xi+\frac{Ct}{d(x,y)^\alpha|B(x,d(x,y))|}\\
	&\leq \frac{Ct}{d(x,y)^\alpha|B(x,d(x,y))|}.
	\end{aligned}
	\ee
	Thus, we proved the upper bound in \eqref{heatf}.
	
	Now we show the lower bound in \eqref{heatf}.
	By Theorem 3.1 of \cite{[B]}, there exists a constant $s_0=s_0(\alpha)$ such that
	\be\label{eta}\eta_1(s)\geq \frac{\alpha s^{-1-\alpha/2}}{4\Gamma(1-\alpha/2)},\ \forall s>s_0.\ee
	Without loss of generality, we assume that $s_0\geq 1$ in the sequel.
	Then we consider two cases.
	
	When $t^{1/\alpha}\geq d(x,y)$, by \eqref{connection}, \eqref{hea}, \eqref{eta}, and \eqref{volume}, it holds that
	\be\label{geq1}
	\begin{aligned}
		&p_\alpha(t,x;y)=\int_{0}^\infty E(t^{2/\alpha}s,x;y)\eta_1(s)\,ds\\
		&\geq \int_{s_0}^\infty \frac{C d_1e^{-{D_1d(x,y)^2}/(t^{2/\alpha}s)}}{|B(x,\sqrt{t^{2/\alpha}s})|}s^{-1-\alpha/2}\,ds= \int_{s_0}^\infty \frac{C d_1e^{-{D_1d(x,y)^2}/(t^{2/\alpha}s)}}{|B(x,t^{1/\alpha})|}\frac{|B(x,t^{1/\alpha})|}{|B(x,\sqrt{t^{2/\alpha}s})|}s^{-1-\alpha/2}\,ds\\
		&\geq  e^{\frac{-D_1}{s_0}} \int_{s_0}^\infty \frac{C d_1}{|B(x,t^{1/\alpha})|}\frac{1}{C^*s^{{\log_2{C^*}}/{2}}}s^{-1-\alpha/2}\,ds\geq \frac{C}{|B(x,t^{1/\alpha})|}.
	\end{aligned}
	\ee
	
When $t^{1/\alpha}<d(x,y)$, letting $\xi=\frac{st^{2/\alpha}}{d(x,y)^2}$, again by \eqref{connection}, \eqref{hea}, \eqref{eta}, and \eqref{volume}, we have
	\be\label{geq2}
	\begin{aligned}
		p_\alpha(t,x;y)&\geq \int_{s_0}^\infty \frac{C d_1e^{-D_1/\xi}}{|B(x,\sqrt{\xi}d(x,y))|}\left(\frac{d(x,y)^2\xi}{t^{2/\alpha}}\right)^{-1-\alpha/2}\frac{d(x,y)^2}{t^{2/\alpha}}d\xi\\
		&\geq \frac{Ct}{d(x,y)^\alpha}\int_{s_0}^\infty \frac{e^{-D_1/\xi}}{|B(x,d(x,y))|}\frac{|B(x,d(x,y))|}{|B(x,\sqrt{\xi}d(x,y))|}\xi^{-1-\alpha/2}d\xi\\
		&\geq \frac{Ct}{d(x,y)^\alpha}\int_{s_0}^\infty \frac{e^{-{D_1}/{s_0}}}{|B(x,d(x,y))|(\sqrt{\xi})^{\log_2{C^*}}}\xi^{-1-\alpha/2}d\xi\\
		&\geq \frac{Ct}{d(x,y)^\alpha|B(x,d(x,y))|}.
	\end{aligned}
	\ee
	Combining \eqref{geq1} and \eqref{geq2}, we reach  (\ref{heatf}).
\end{proof}

Now in order to prove \eqref{heat3}, we establish an estimate for high-order time derivatives of the heat kernel $E(t,x;y)$ first.
\begin{lemma}\label{heat1}
	Let $\mathrm{M}$ be a $d-$dimensional complete Riemannian manifold satisfying Conditions \eqref{cond1} and \eqref{cond2}. Then for any $x,y\in \mathrm{M}$, $t>0$, and any nonnegative integer $k$, there exist positive constants $C_1$ and $C_2$ such that the heat kernel $E(t,x;y)$ of the heat equation
	$$\p_t u-\Delta u=0$$
	satisfies
	$$
	|\p_t^k E(t,x;y)|\leq \frac{C_1^{k+1}k^{k-2/3}}{t^{k}|B(x,\sqrt{t})|}e^{{-C_2 d(x,y)^2}/{t}}.
	$$
\end{lemma}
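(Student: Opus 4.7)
I will proceed by induction on $k$, using the semigroup structure of $E$ together with a Gaussian convolution estimate on $\mathrm{M}$.

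The base cases $k=0$ and $k=1$ are immediate from \eqref{hea} and \eqref{Et} (Saloff-Coste's two-sided bounds, Theorem 5.4.12 of \cite{[Saloff]}, which are guaranteed by Conditions \eqref{cond1} and \eqref{cond2}). For the inductive step, assume the claimed estimate holds for $k-1$. Using the semigroup property
\[
\partial_t^k E(t,x;y) = \int_{\mathrm{M}} \partial_t E(t-\sigma, x;z)\,\partial_\sigma^{k-1} E(\sigma, z;y)\, dz,\qquad \sigma\in(0,t),
\]
I will plug in the $k=1$ bound on the first factor and the induction hypothesis on the second factor. The main analytic ingredient is the Gaussian convolution estimate
\[
\int_{\mathrm{M}} \frac{e^{-C_2 d(x,z)^2/(t-\sigma)}}{|B(x,\sqrt{t-\sigma})|}\cdot\frac{e^{-C_2 d(z,y)^2/\sigma}}{|B(z,\sqrt{\sigma})|}\, dz \;\leq\; \frac{C\, e^{-C_2' d(x,y)^2/t}}{|B(x,\sqrt{t})|},
\]
which I will prove by the standard splitting $\mathrm{M} \subset \{d(x,z)\ge d(x,y)/2\}\cup\{d(z,y)\ge d(x,y)/2\}$, combining the trivial $L^1$-bound on $E$ with the volume doubling \eqref{cond2} in the form \eqref{volume} to absorb $|B(z,\cdot)|$ into $|B(x,\cdot)|$. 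This is the same mechanism used in \eqref{sep3}–\eqref{case2} above, adapted to the manifold setting.

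The delicate point is choosing $\sigma$ to pin down the sub-exponential growth $k^{k-2/3}$. I will take $\sigma=(k-1)t/k$, so that $t-\sigma=t/k$. Then the induction hypothesis contributes the factor $(k-1)^{(k-1)-2/3}/\sigma^{k-1}$ and the $k=1$ estimate contributes $1/(t-\sigma)$; the product is
\[
\frac{(k-1)^{k-1-2/3}}{\sigma^{k-1}(t-\sigma)} \;=\; \frac{(k-1)^{k-1-2/3} k^k}{(k-1)^{k-1} t^k} \;=\; \frac{k^k}{(k-1)^{2/3}\,t^k} \;\lesssim\; \frac{k^{k-2/3}}{t^k},
\]
which is exactly the required prefactor. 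The Gaussian convolution lemma then yields the spatial factor $e^{-C_2 d(x,y)^2/t}/|B(x,\sqrt{t})|$, and the multiplicative constant from that lemma is absorbed into $C_1^{k+1}$ by choosing $C_1$ large enough (so that $C \leq C_1^{1/2}$, say, as in the proof of Lemma \ref{le1}).

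I expect the main obstacle to be the precise verification of the Gaussian convolution estimate on $\mathrm{M}$ without losing the exponent in the off-diagonal Gaussian. The difficulty is that after splitting by distance, one must ensure that the surviving Gaussian on the smaller region, combined with volume doubling across two different scales $\sqrt{\sigma}$ and $\sqrt{t-\sigma}$ (which may differ by a factor of $k$), still yields a clean bound in terms of $|B(x,\sqrt{t})|$. Once this is controlled uniformly in $\sigma$ (for $\sigma\asymp t$), the algebraic bookkeeping with the choice $\sigma=(k-1)t/k$ proceeds exactly as in the Euclidean proof of Lemma \ref{le1}, and the induction closes.
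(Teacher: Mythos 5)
Your route (induction via the semigroup identity, mimicking Lemma \ref{le1}) is genuinely different from the paper's, and as written it has a gap at exactly the point you flag but then dismiss: the Gaussian exponent degrades at every inductive step, so you cannot obtain a single constant $C_2$ valid for all $k$, which is what the lemma asserts. Concretely, with $\sigma=(k-1)t/k$ your crude splitting gives, on the region $\{d(z,y)\ge d(x,y)/2\}$, only the factor $e^{-C_2 d(x,y)^2/(4\sigma)}\le e^{-(C_2/4)\,d(x,y)^2/t}$ (since $\sigma$ is comparable to $t$, not to $t/k$), so one step of the induction turns the constant $C_2$ of the hypothesis into roughly $C_2/4$ in the conclusion; iterating, the constant becomes $C_2/4^{k}\to 0$ and the claimed uniform bound $e^{-C_2 d(x,y)^2/t}$ is lost. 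This is precisely the structural difference from Lemma \ref{le1}: the power-law profile $(t^{1/\alpha}+|x-y|)^{-(d+\alpha)}$ is reproduced under convolution with no ``decay parameter'' to lose, whereas Gaussian profiles are not, once you must reserve part of the exponent to carry out the $z$-integration. A repair along your lines is possible but requires a sharper convolution step than the one you propose: distinguish the fresh constant $D_2$ of $\p_t E(t/k,x;\cdot)$ from \eqref{Et} (which is the same at every step) from the inductive constant $C_2\le D_2/2$, use the weighted inequality $\frac{A^2}{a}+\frac{B^2}{b}\ge\frac{(A+B)^2}{a+b}$ with the full inductive exponent plus half of $D_2$ to recover exactly $e^{-C_2 d(x,y)^2/t}$ (this works because the times add, $t/k+\sigma=t$), and charge the leftover needed for the $z$-integration and for converting $|B(z,\sqrt{\sigma})|$ into $|B(x,\sqrt{t})|$ (via \eqref{vol} and \eqref{volume}) entirely to the remaining half of the short-time factor. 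Your proposal, which keeps the same $C_2$ in both factors and splits crudely, does not do this.

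For comparison, the paper avoids iterating Gaussian convolutions altogether: it bounds $\iint |\p_t^k E|^2$ over a small parabolic cylinder by $\frac{C^{2k}k^{2k}}{t_0^{2k}}\iint_{M_{k+1}^1}|E|^2$ using the $L^2$ iteration of Lemma 4.1 in \cite{[Zeng]} (estimate \eqref{im4}), inserts the Gaussian bound \eqref{hea} for $E$ itself only once, and then upgrades to a pointwise bound by applying the parabolic mean value inequality to the subsolution $|\p_t^k E|^2$, as in \eqref{meanva}. That is why the exponential constant $C_2$ there is uniform in $k$ for free, with the combinatorial factor $k^{k-2/3}$ coming from the energy iteration rather than from a chain of kernel compositions. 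Your algebra for the prefactor (the choice $\sigma=(k-1)t/k$ and absorbing constants into $C_1^{k+1}$) is fine, and the base cases are indeed \eqref{hea} and \eqref{Et}; the missing ingredient is solely the uniform-in-$k$ Gaussian factor.
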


\begin{remark}
	To our best knowledge, up to now, in the literature, one can only find the  coarser  bounds
	$$
	|\p_t^k E(t,x;y)|\leq \frac{C(k)}{t^{k}|B(x,\sqrt{t})|}e^{{-C_2 d(x,y)^2}/{t}}
	$$
	in the manifold case, where $C(k)$ is not explicitly calculated.  See, for instance, Theorem 5.4.12 in \cite{[Saloff]}.  Here we obtain a more precise result.
\end{remark}
\begin{proof}
	The proof is similar to Lemma 4.1 of \cite{[Zeng]}. However, since we have different conditions here and we have the estimate of $\p_t^k E(t,x;y)$ for all time $t>0$ instead of $t\in (0,1]$, the proof is a bit different. We present the proof here for the reader's convenience.

	Fix any $t_0>0$ and $x_0, y_0\in \mathrm{M}$. For any nonnegative integer $k$ and $j=1,2,\ldots,k+1$,
	we define
	$$M_j^1=\left\{(t,x):t\in \left(t_0-\frac{j t_0}{2k},t_0\right), d(x,x_0)<\frac{j \sqrt{t_0}}{\sqrt{2k}}\right\},$$
	$$M_j^2=\left\{(t,x):t\in \left(t_0-\frac{(j+0.5) t_0}{2k},t_0\right), d(x,x_0)<\frac{(j+0.5) \sqrt{t_0}}{\sqrt{2k}}\right\}.$$
	Then $M_j^1\subset M_j^2\subset M_{j+1}^1$.
	
	Following  the proof of Lemma 4.1 of \cite{[Zeng]}, for a constant $C$, we have
	\be\label{im4}
	\iint_{M_1^1}|\p_t^k E(t,x;y_0)|^2\, dxdt\leq \frac{C^{2k}k^{2k}}{t_0^{2k}}\iint_{M_{k+1}^1}|E(t,x;y_0)|^2\, dxdt.
	\ee

	Now to estimate the right-hand side of \eqref{im4}, we have two cases.
	
	\noindent\textbf{Case 1:} $\boldsymbol{d(x_0,y_0)\leq \sqrt{4kt_0}}$. In this case, we need to use a well-known result which can be found, for instance, in Lemma 5.2.7 of \cite{[Saloff]}: under Condition \eqref{cond2}, for a constant $C$, we have
	\be\label{vol}
	|B(x,r)|\leq e^{{Cd(x,y)}/{r}}|B(y,r)|,\ \forall x,y\in \mathrm{M}\ \text{and}\ r>0.
	\ee
	By \eqref{hea}, \eqref{volume}, and \eqref{vol}, it holds that
	$$
	\begin{aligned}
	& \frac{C^{2k}k^{2k}}{t_0^{2k}}\iint_{M_{k+1}^1}|E(t,x;y_0)|^2\, dxdt\leq \frac{C^{2k+1/2}k^{2k}|B(x_0,\frac{(k+1)\sqrt{t_0}}{\sqrt{2k}})|}{t_0^{2k-1}\min\limits_{x\in B(x_0,(k+1)\sqrt{t_0}/\sqrt{2k})}|B(x,\sqrt{t_0})|^2}
	\\ &=\frac{C^{2k+1/2}k^{2k}}{t_0^{2k-1}}\frac{|B(x_0,\frac{(k+1)
\sqrt{t_0}}{\sqrt{2k}})|}{|B(x_0,\sqrt{t_0})|^2}\frac{|B(x_0,\sqrt{t_0})|^2}{\min\limits_{x\in B(x_0,(k+1)\sqrt{t_0}/\sqrt{2k})}|B(x,\sqrt{t_0})|^2}\\
	&\leq \frac{C^{2k+3/4}k^{2k}}{t_0^{2k-1}|B(x_0,\sqrt{t_0})|}\left(\frac{k+1}{\sqrt{2k}}\right)^{log_2{C^*}}
\exp\left(\frac{2C(k+1)}{\sqrt{2k}}\right)\leq \frac{C^{2k+1}k^{2k+1}}{t_0^{2k-1}|B(x_0,\sqrt{t_0})|}e^{{-C_2 d(x_0,y_0)^2}/{t_0}}
	\end{aligned}
	$$
	for a constant $C_2$, where we used the condition $d(y_0,x_0)\leq \sqrt{4kt_0}$ in the last inequality.
	
	\noindent\textbf{Case 2:} $\boldsymbol{d(x_0,y_0)>\sqrt{4kt_0}}$. In this case, because $d(x,x_0)<\frac{(k+1)\sqrt{t_0}}{\sqrt{2k}}$ in $M_{k+1}^1$, by the triangle inequality, we have $\frac{\sqrt{2}-1}{\sqrt{2}}< \frac{d(x,y_0)}{d(x_0,y_0)}< 2$. Therefore, by \eqref{hea}, \eqref{volume}, and \eqref{vol}, it holds that
	$$
	\begin{aligned}
	& \frac{C^{2k}k^{2k}}{t_0^{2k}}\iint_{M_{k+1}^1}|E(t,x;y_0)|^2\, dxdt\\
	&\leq \frac{C^{2k}k^{2k}t_0|B(x_0,\frac{(k+1)\sqrt{t_0}}{\sqrt{2k}})|}{t_0^{2k}\min\limits_{x\in B(x_0,(k+1)\sqrt{t_0}/(2\sqrt{k}))}|B(x,\sqrt{t_0})|^2}e^{{-(3-2\sqrt{2})D_2 d(x_0,y_0)^2}/(2t_0)}\\
	&\leq \frac{C^{2k+1/2}k^{2k}}{t_0^{2k-1}}
\frac{|B(x_0,\frac{(k+1)\sqrt{t_0}}{\sqrt{2k}})|}{|B(x_0,\sqrt{t_0})|^2}\frac{|B(x_0,\sqrt{t_0})|^2}{\min\limits_{x\in B(x_0,(k+1)\sqrt{t_0}/(2\sqrt{k}))}|B(x,\sqrt{t_0})|^2}e^{{-C_2 d(x_0,y_0)^2}/{t_0}}\\
	&\leq \frac{C^{2k+3/4}k^{2k}}{t_0^{2k-1}}\frac{1}{|B(x_0,\sqrt{t_0})|}
\left(\frac{k+1}{\sqrt{2k}}\right)^{log_2{C^*}}
\exp\left(\frac{C(k+1)}{\sqrt{k}}\right)e^{{-C_2 d(x_0,y_0)^2}/{t_0}}\\
	&\leq \frac{C^{2k+1}k^{2k+1}}{t_0^{2k-1}|B(x_0,\sqrt{t_0})|}e^{{-C_2 d(x_0,y_0)^2}/{t_0}}
	\end{aligned}
	$$
for a constant $C_2$.
	
	Combining the above two cases, we get
	\be\label{im5}
	\iint_{M_1^1}|\p_t^k E(t,x;y_0)|^2\, dxdt\leq \frac{C^{2k+1}k^{2k+1}}{t_0^{2k-1}|B(x_0,\sqrt{t_0})|}e^{{-C_2 d(x_0,y_0)^2}/{t_0}}.
	\ee
	
	Now we recall a well-known parabolic mean value inequality, which can be found, for instance, in Theorem 14.7 of \cite{[Li]} or Theorem 5.2.9 of \cite{[Saloff]}. For $0<r<R<1$, any nonnegative subsolution $u=u(t,x)$ of the heat equation satisfies
	$$\begin{aligned}
	&\sup _{Q_{r}\left(t_0,x_0 \right)} u(t,x)\leq C\left(\frac{R^2}{|B(x_0,r)|^{2/\nu}}\right)^{\nu/2}
	\left(  \frac{1}{|R-r|^2}\right)^{(\nu+2)/2}\iint_{Q_{R}\left(t_0,x_0\right)} u(t,x)\, d x d t ,
	\end{aligned}$$
	where $\nu>2$ is a constant  and $Q_r(t,x)=(t-r^2,t)\times B(x,r)$. Letting $u(t,x)=|\p_t^k E(t,x;y_0)|^2$, $r\searrow 0$, and $R=\sqrt{t_0/(2k)}$,  using \eqref{volume}, we see that
	\be\label{meanva}
	\begin{aligned}
		|\p_t^k E(t_0,x_0;y_0)|^{2}&\leq \frac{C k}{\left|B\left(x_{0}, \sqrt{t_0/(2k)}\right)\right|t_0} \iint_{Q_{\sqrt{t_0/(2k)}}\left(t_0,x_0\right)} (\p_t^k E(t,x;y_0))^{2}\, d x d t\\
		&=\frac{C k}{|B(x_{0},\sqrt{t_0})|t_0} \frac{|B(x_0,\sqrt{t_0})|}{\left|B\left(x_{0}, \sqrt{t_0/(2k)}\right)\right|}  \iint_{Q_{\sqrt{t_0/(2k)}}\left(t_0,x_0\right)} (\p_t^k E(t,x;y_0))^{2}\, d x d t\\
		& \leq \frac{C k\left(\sqrt{2k}\right)^{log_2(C^*)}}{\left|B(x_{0}, \sqrt{t_0})\right|t_0} \iint_{Q_{\sqrt{t_0/(2k)}}\left(t_0,x_0\right)} (\p_t^k E(t,x;y_0))^{2}\, d x d t.
	\end{aligned}
	\ee
	By \eqref{im5} and \eqref{meanva}, we obtain
	$$
	|\p_t^k E(t_0,x_0;y_0)|^{2}\leq \frac{C^{2k+2}k^{2k+1+\log_2(C^*)/2}}{t_0^{2k}|B(x_0,\sqrt{t_0})|^2}e^{{-C_2 d(x_0,y_0)^2}/{t_0}}.
	$$
	Thus, $$|\p_t^k E(t_0,x_0;y_0)|\leq \frac{C_1^{k+1}k^{k-2/3}}{t_0^{k}|B(x_0,\sqrt{t_0})|}e^{{-C_2 d(x_0,y_0)^2}/{t_0}}$$
for a sufficiently large constant $C_1$, which finishes the proof of Lemma \ref{heat1}.
\end{proof}

To prove the time analyticity of the heat kernel $p_\alpha(t,x;y)$, we use the following result.
\begin{lemma}[\cite{[K]} Proof of Proposition 1.4.2]\label{comp}
	Suppose that $f=f(x)$ is real analytic at $x_0\in \mathbb{R}$, which satisfies near $x_0$,
	$$|f^{(k)}(x)|\leq C_1\frac{k!}{R^k},\quad \forall\  \text{integer}\ k\geq 0.$$
	Assume that $g=g(x)$ is real analytic at $f(x_0)\in \mathbb{R}$ which satisfies near $f(x_0)$,
	$$|g^{(k)}(y)|\leq C_3\frac{k!}{S^k},\quad \forall\  \text{integer}\ k\geq 0.$$
	Here $R$ and $S$ are positive constants.
	Then $h(x)=g(f(x))$ is analytic near $x_0$ and satisfies
	$$|h^{(k)}(x_0)|\leq \frac{C_1C_3}{S+C_1}\frac{k!(1+C_1/S)^k}{R^k},\quad \forall\  \text{integer}\ k\geq 0.$$
\end{lemma}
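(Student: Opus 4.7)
The plan is to use the classical method of majorant series. The key observation is that Fa\`a di Bruno's formula expresses $h^{(k)}(x_0)$ as a \emph{nonnegative integer} linear combination of products of $g^{(m)}(f(x_0))$ and $f^{(j)}(x_0)$ with $j\geq 1$; replacing each derivative by its modulus upper bound therefore automatically yields an upper bound on $|h^{(k)}(x_0)|$. Equivalently, if I introduce the generating functions
\[
F(x) = C_1 \sum_{k=1}^\infty \frac{(x-x_0)^k}{R^k} = \frac{C_1(x-x_0)/R}{1-(x-x_0)/R}, \qquad G(y) = C_3 \sum_{m=0}^\infty \frac{(y-f(x_0))^m}{S^m} = \frac{C_3}{1-(y-f(x_0))/S},
\]
whose Taylor coefficients exactly match the hypothesized bounds on $|f^{(k)}(x_0)|/k!$ (for $k\geq 1$) and $|g^{(m)}(f(x_0))|/m!$, and set $H(x) := G(F(x)+f(x_0))$, then it suffices to establish $|h^{(k)}(x_0)| \leq H^{(k)}(x_0)$ and compute $H^{(k)}(x_0)$ in closed form.

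The computation is a short algebraic exercise. With $u = (x-x_0)/R$ and $a = C_1/S$, one has $F(x)/S = au/(1-u)$, so
\[
H(x) = \frac{C_3}{1 - au/(1-u)} = \frac{C_3(1-u)}{1-(1+a)u} = C_3 + C_3 a \sum_{k=1}^\infty (1+a)^{k-1} u^k,
\]
the last equality coming from expanding the geometric series $1/[1-(1+a)u]$ and telescoping against the factor $1-u$. Reading off the $k$-th Taylor coefficient and multiplying by $k!$ gives
\[
H^{(k)}(x_0) = C_3 a(1+a)^{k-1} \frac{k!}{R^k} = \frac{C_1 C_3}{S+C_1} \cdot \frac{k!\,(1+C_1/S)^k}{R^k},
\]
where the second equality uses $a/(1+a) = C_1/(S+C_1)$. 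This matches the claimed bound.

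The only non-routine point is the majorant principle itself, namely the inequality $|h^{(k)}(x_0)| \leq H^{(k)}(x_0)$. This follows immediately from Fa\`a di Bruno's formula once one notes that the Bell polynomials $B_{k,m}$ have nonnegative integer coefficients, so substituting the upper bounds $|f^{(j)}(x_0)| \leq C_1 j!/R^j$ and $|g^{(m)}(f(x_0))| \leq C_3 m!/S^m$ into the expression for $h^{(k)}(x_0)$ produces an expression identical term by term to $H^{(k)}(x_0)$ computed by the same formula. Since both facts (nonnegativity of Bell polynomial coefficients, and the closed form of the composition above) are elementary, I do not expect any serious obstacle; the main content of the proof is the explicit identification of the resulting geometric-type series.
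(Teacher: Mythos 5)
Your majorant argument is correct and is essentially the proof the paper points to: the lemma is quoted with a citation to Krantz--Parks (Proof of Proposition 1.4.2), which is exactly this majorization of $f$ and $g$ by geometric series and the closed-form expansion $H(x)=C_3+\frac{C_1C_3}{S+C_1}\sum_{k\ge1}(1+C_1/S)^k\big(\frac{x-x_0}{R}\big)^k$, so your computation matches the intended proof for all $k\ge 1$. The only caveat is $k=0$, where the majorant (and any argument) can only give $|h(x_0)|\le C_3$ rather than the stated $\frac{C_1C_3}{S+C_1}<C_3$ (take $g\equiv C_3$ constant to see the stated $k=0$ bound fails); this is an imprecision in the lemma as quoted, not a gap in your proof, and it is harmless for the paper's application, where the $k=0$ case reduces to the trivial bound $C_3$.
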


Now we are ready to prove \eqref{heat3} and thus completes the proof of Theorem \ref{theo04}.
\subsection{Proof of (\ref{heat3}) in Theorem \ref{theo04}}
\begin{proof}
	By \eqref{connection}, we have
	\be\label{connection1}
	\p_t^np_\alpha(t,x;y)=\int_0^\infty \p_t^n E(t^{2/\alpha}s,x;y)\eta_1(s)\,ds.
	\ee
	We write $E(t^{2/\alpha}s,x;y)=E(t,x;y)\circ (t^{2/\alpha}s)=g(t)\circ f(t)$, where
	$g(t):=E(t,x;y)$ and $f(t):=t^{2/\alpha}s$.
	Then by Lemma \ref{heat1}, for a constant $C^{(1)}>0$,
	$$|\p_t^k g(t)|\leq \frac{(C^{(1)})^k k!}{t^k|B(x,\sqrt{t})|}e^{{-C_2 d(x,y)^2}/{t}},\quad \forall\  \text{integer}\ k\geq 0.$$
	Let $C_3=\frac{e^{{-C_2 d(x,y)^2}/(t^{2/\alpha}s)}}{|B(x,\sqrt{t^{2/\alpha}s})|}$ and $S={t^{2/\alpha}s}/{C^{(1)}}$.
	For $f(t)$, it holds that $$|f^{(k)}(t)|\leq \frac{(C^{(2)})^k k! t^{2/\alpha}s}{t^k},\quad \forall\  \text{integer}\ k\geq 0$$
for a constant $C^{(2)}>0$.
	Let $C_1=t^{2/\alpha}s$ and $R={t}/{C^{(2)}}$. Then by Lemma \ref{comp}, we have for a constant $C>0$,
	$$
	|\p_t^kE(t^{2/\alpha}s,x;y)|\leq \frac{C_1C_3}{S+C_1}\frac{k!(1+C_1/S)^k}{R^k}\leq \frac{C^k k!}{t^k}\frac{e^{{-C_2 d(x,y)^2}/(t^{2/\alpha}s)}}{|B(x,\sqrt{t^{2/\alpha}s})|}.
	$$
	Therefore, by \eqref{connection1}, we deduce that
	$$|\p_t^k p_\alpha(t,x;y)|\leq\int_0^\infty \frac{C^k k!}{t^k}\frac{e^{{-C_2 d(x,y)^2}/(t^{2/\alpha}s)}}{|B(x,\sqrt{t^{2/\alpha}s})|} \eta_1(s)\,ds.$$
By the same calculations as \eqref{he1} and \eqref{he2}, we  deduce \eqref{heat3} immediately.
\end{proof}

\section{Corollaries on backward and other equations}\label{5thsec}
In this last section, we present four corollaries, whose statements and proofs are similar to the corresponding results in \cite{[DZ]} and \cite{[Zeng]}.

First we consider the Cauchy problem for the backward nonlocal parabolic equations
\be\label{back}
\left\{\begin{array}{l}
	\partial_{t} u+\mathrm{L}_\alpha^{\kappa} u=0,\ \forall x\in \mathbb{R}^d \\
	u(0, x)=a(x)
\end{array}\right.
\ee
with $\kappa(\cdot,
\cdot)$ satisfying \eqref{1stco} and \eqref{2ndco}.
\begin{corollary}\label{cor1}
	Equation \eqref{back} has a smooth solution $u=u(t,x)$ of polynomial growth of order $\alpha-\epsilon$ in $(0, \delta) \times \mathbb{R}^d$ for some $\delta>0$, i.e.,
	\be\label{aim3}
	|u(t,x)|\leq C(1+|x|^{\alpha-\epsilon}),\ 0<\epsilon<\alpha,\  (t,x)\in (0, \delta)\times \mathbb{R}^d,
	\ee
	if and only if
	\be\label{ifonly3}
	|\left(\mathrm{L}_\alpha^{\kappa}\right)^{k} a(x)| \leq A_1^{k+1}k^k\left(1+|x|^{\alpha-\epsilon}\right), \quad k=0,1,2, \ldots
	\ee
	where $A_1$ is a positive constant.
	
\end{corollary}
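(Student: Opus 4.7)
The plan is to treat the two directions separately, passing to the forward equation by time reversal so that Theorem \ref{theo01} becomes available.

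For the necessity (\eqref{aim3} $\Rightarrow$ \eqref{ifonly3}): set $v(s,x):=u(\delta-s,x)$ for $s\in(0,\delta)$. A short calculation shows $\p_s v - \mathrm{L}_\alpha^{\kappa} v=0$, and $v$ inherits the polynomial growth bound $|v(s,x)|\leq C(1+|x|^{\alpha-\epsilon})$. I would then invoke the uniqueness statement in Theorem \ref{theo01}(b) to identify $v$ with its mild representation, and apply \eqref{goal1} (Remark 1.2) on the interval $(0,\delta]$, after a trivial rescaling if needed. Differentiating the PDE in time, $\p_s^k v=(\mathrm{L}_\alpha^{\kappa})^k v$, so evaluating \eqref{goal1} at $s=\delta$ (or at any fixed $s_0\in(0,\delta)$) yields
\[
|(\mathrm{L}_\alpha^{\kappa})^k a(x)|=|\p_s^k v(\delta,x)|\leq A_1^{k+1}k^k(1+|x|^{\alpha-\epsilon})
\]
with $A_1$ depending on $C$, $\delta$, $\alpha$, $\epsilon$. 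Since $v(\delta,x)=u(0,x)=a(x)$, this is exactly \eqref{ifonly3}.

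For the sufficiency (\eqref{ifonly3} $\Rightarrow$ \eqref{aim3}): I would define
\[
u(t,x):=\sum_{k=0}^{\infty}\frac{(-t)^k}{k!}(\mathrm{L}_\alpha^{\kappa})^k a(x),
\]
and use Stirling's formula together with \eqref{ifonly3} to bound each term by $A_1(1+|x|^{\alpha-\epsilon})(A_1 e t)^k$. Choosing $\delta<1/(2A_1 e)$ makes the series converge uniformly on $[0,\delta]\times K$ for every compact $K\subset\mathbb{R}^d$, which produces both the growth estimate \eqref{aim3} and the initial condition $u(0,x)=a(x)$. To verify the PDE, I would truncate at level $N$, writing $u_N=\sum_{k=0}^{N}(-t)^k(\mathrm{L}_\alpha^{\kappa})^k a(x)/k!$. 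A direct reindexing gives the telescoping identity
\[
\p_t u_N + \mathrm{L}_\alpha^{\kappa} u_N = \frac{(-t)^N}{N!}(\mathrm{L}_\alpha^{\kappa})^{N+1}a(x),
\]
whose right-hand side tends to $0$ locally uniformly by \eqref{ifonly3} for $t<\delta$. Passing to the limit $N\to\infty$ term by term then gives $\p_t u + \mathrm{L}_\alpha^{\kappa} u=0$ in the pointwise sense.

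The main subtle point I expect is legitimizing the term-by-term application of the nonlocal operator $\mathrm{L}_\alpha^{\kappa}$ to the infinite series: since $\mathrm{L}_\alpha^{\kappa}$ is a nonlocal integral operator with kernel $\kappa(x,z)/|z|^{d+\alpha}$, one must verify that $\mathrm{L}_\alpha^{\kappa}\!\sum_k = \sum_k \mathrm{L}_\alpha^{\kappa}$. I would handle this by splitting the defining integral \eqref{def} into a near-field piece $|z|\le 1$ (where the hypothesis \eqref{ifonly3} together with smoothness of the partial sums gives a uniformly integrable majorant through a second-order Taylor expansion of $(\mathrm{L}_\alpha^{\kappa})^k a$) and a far-field piece $|z|>1$ (where the polynomial growth bound $A_1^{k+1}k^k(1+|x+z|^{\alpha-\epsilon})$ is integrable against $|z|^{-d-\alpha}$ uniformly in the truncation), and then applying dominated convergence. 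Smoothness of $u$ in $x$ follows similarly once one notes that derivatives of $(\mathrm{L}_\alpha^{\kappa})^k a$ inherit comparable bounds from interior Schauder-type estimates for $\mathrm{L}_\alpha^{\kappa}$, giving a classical solution. Uniqueness within the polynomial-growth class in the sufficiency direction is not needed for the statement, but follows from Lemma \ref{le2} applied to the reversed equation.
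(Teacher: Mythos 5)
Your proposal is correct and follows essentially the same route as the paper: time reversal plus Theorem \ref{theo01} for necessity (the paper simply cites part (c), which already packages the identity $a_{j+1}=\mathrm{L}_\alpha^{\kappa}a_j$ and the bound $|(\mathrm{L}_\alpha^{\kappa})^k a|\le C_1C_2^k k^k(1+|x|^{\alpha-\epsilon})$ that you re-derive via \eqref{goal1} and differentiating the PDE), and the power series $\sum_k \frac{(-t)^k}{k!}(\mathrm{L}_\alpha^{\kappa})^k a(x)$ with the factorial bounds and uniform convergence of the differentiated/operator-applied series for sufficiency. Two minor remarks: drop the parenthetical ``or at any fixed $s_0\in(0,\delta)$,'' since only the endpoint where $v=a$ (i.e.\ $s=\delta$, after the small shift needed because $u$ lives on an open interval) produces \eqref{ifonly3}; and your near-field/far-field justification of $\mathrm{L}_\alpha^{\kappa}\sum_k=\sum_k\mathrm{L}_\alpha^{\kappa}$ is more explicit than the paper, which asserts this interchange directly from the uniform convergence of the relevant series.
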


\begin{proof}
	
	On  one hand, suppose that (\ref{back}) has a smooth solution of polynomial growth of order $\alpha-\epsilon$, say $u=u(t,x)$. Then $u(-t,x)$ is a solution of the nonlocal parabolic equations with polynomial growth of order $\alpha-\epsilon$. By Theorem \ref{theo01} and \eqref{aim3}, \eqref{ifonly3} follows immediately.
	
	On the other hand, suppose that (\ref{ifonly3}) holds. Then it is easy to check that
	$$u(t,x)=\sum_{j=0}^{\infty} (\mathrm{L}_\alpha^{\kappa})^{j} a(x) \frac{t^{j}}{j !}$$
	is a smooth solution of the fraction heat equation for $t \in (-\delta, 0]$ with $\delta$ sufficiently small. Indeed, the bounds (\ref{ifonly3}) guarantee that the above series and the series
	$$
	\sum_{j=0}^{\infty} (\mathrm{L}_\alpha^{\kappa})^{j+1} a(x) \frac{t^{j}}{j !} \quad \text{and}\quad \sum_{j=0}^{\infty} (\mathrm{L}_\alpha^{\kappa})^{j} a(x) \frac{\p_t t^{j}}{j !}
	$$
	all converge absolutely and uniformly in $[-\delta, 0] \times B_R(0)$ for any fixed $R>0 .$ Hence, $\partial_{t} u-\mathrm{L}_\alpha^{\kappa}u=0$. Moreover, $u$ has polynomial growth of order $\alpha-\epsilon$ since
	\be\label{gro}
	|u(t,x)| \leq \sum_{j=0}^{\infty}\left|(\mathrm{L}_\alpha^{\kappa})^{j} a(x)\right| \frac{t^{j}}{j !} \leq  \sum_{j=0}^{\infty} A_1^{j+1}j^j\left(1+|x|^{\alpha-\epsilon}\right)\frac{t^{j}}{j !}  \leq A_{1} \left(1+|x|^{\alpha-\epsilon}\right)
	\ee
	provided that $t \in[-\delta, 0]$ with $\delta$ sufficiently small.
	Thus, $u(-t,x)$ is a solution to the Cauchy problem of the backward nonlocal parabolic equations (\ref{back}) of polynomial growth of order $\alpha-\epsilon$.
\end{proof}

We have another corollary below about the forward Cauchy problem for the nonlocal parabolic equations
\begin{equation}\label{Cauchy}
\left\{\begin{array}{l}
\partial_{t} u-\mathrm{L}_\alpha^{\kappa} u=0, \ \forall x\in \mathbb{R}^d  \\
u(0, x)=a(x).
\end{array}\right.
\end{equation}
The main point is the analyticity of solutions down to the initial time.
\begin{corollary}
	Equation \eqref{Cauchy}
	 has a smooth solution $u=u(t,x)$ of polynomial growth of order $\alpha-\epsilon$, which is time analytic in
	$[0, \delta) $ for some $\delta>0$ with the radius of convergence independent of $x$ if and only if
	\begin{equation}\label{ifonly4}
	|\left(\mathrm{L}_\alpha^{\kappa}\right)^k a(x)| \leq A_1^{k+1}k^k\left(1+|x|^{\alpha-\epsilon}\right), \quad k=0,1,2, \ldots
	\end{equation}
	for a positive constant $A_1$.
\end{corollary}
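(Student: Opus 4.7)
The plan is to adapt the argument for Corollary \ref{cor1} to the forward Cauchy problem, noting that time analyticity at $t = 0$ with a radius of convergence independent of $x$ is precisely equivalent to factorial-type growth bounds on the iterated action of $\mathrm{L}_\alpha^\kappa$ on the initial data.

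For the sufficiency direction $(\Leftarrow)$, I would directly construct the candidate solution
$$
u(t,x) := \sum_{j=0}^\infty (\mathrm{L}_\alpha^\kappa)^j a(x)\, \frac{t^j}{j!}.
$$
Invoking Stirling's inequality $j^j \le e^j\, j!$ together with \eqref{ifonly4}, each term is bounded by $A_1 (eA_1 |t|)^j (1+|x|^{\alpha-\epsilon})$, so for $\delta < 1/(eA_1)$ the series converges absolutely and uniformly on $[0,\delta) \times B_R(0)$ for any $R > 0$. The same geometric dominance justifies term-by-term time differentiation, giving $\partial_t u(t,x) = \sum_j (\mathrm{L}_\alpha^\kappa)^{j+1} a(x)\, t^j/j!$. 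To identify this with $\mathrm{L}_\alpha^\kappa u$, I would push $\mathrm{L}_\alpha^\kappa$ under the sum via dominated convergence applied to the integral representation \eqref{def}, using the uniform bound \eqref{ifonly4} on the iterates. Then $u$ solves \eqref{Cauchy}, satisfies $u(0,\cdot) = a$, grows polynomially of order $\alpha - \epsilon$ exactly as in \eqref{gro}, and is manifestly time analytic on $[0,\delta)$ with a radius independent of $x$.

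For the necessity direction $(\Rightarrow)$, suppose $u$ is a smooth solution of polynomial growth, time analytic on $[0,\delta)$ with radius independent of $x$. By repeatedly differentiating the PDE in time and evaluating at $t=0$, an induction gives
$$
\partial_t^k u(0,x) = (\mathrm{L}_\alpha^\kappa)^k a(x) \qquad \text{for all } k \ge 0.
$$
The radius-of-convergence hypothesis, combined with the polynomial growth bound, then yields a Cauchy-type estimate
$$
|\partial_t^k u(0,x)| \le \frac{k!}{(\delta/2)^k} \sup_{|t|\le \delta/2} |u(t,x)| \le \frac{C\, k!}{(\delta/2)^k}(1+|x|^{\alpha-\epsilon}),
$$
and $k! \le C k^k$ absorbs the factorial, producing the bound $A_1^{k+1} k^k (1+|x|^{\alpha-\epsilon})$ for $A_1$ chosen large enough in terms of $\delta$ and $C$.

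The main obstacle is the Cauchy step: the polynomial growth hypothesis is given for real $t$, whereas the Cauchy formula requires control of $u$ on a complex circle of radius $\delta/2$. I would handle this by reading ``time analytic with radius of convergence independent of $x$'' in the strong sense that already appears in Theorem \ref{theo01}(c), namely that one has derivative bounds $|\partial_t^k u(t,x)| \le C^{k+1} k^k(1+|x|^{\alpha-\epsilon} + t^{-\epsilon/\alpha})$ on $(0,\delta)$ uniformly in $x$. Evaluating at any fixed $t \in (0,\delta)$ and letting $t \to 0^+$ (using that the right-hand side of \eqref{ifonly4} is independent of $t$, together with continuity of $(\mathrm{L}_\alpha^\kappa)^k a$) then yields the required pointwise bound on $(\mathrm{L}_\alpha^\kappa)^k a(x)$. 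If instead one only assumes real-analyticity plus polynomial growth, the same conclusion can be extracted by extending $u(\cdot,x)$ holomorphically via its Taylor series and using a maximum-modulus argument on a slightly smaller complex disc to transfer the growth bound off the real axis.
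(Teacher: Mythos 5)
Your sufficiency direction is fine and is essentially the paper's: you build $u^*(t,x)=\sum_j (\mathrm{L}_\alpha^{\kappa})^j a(x)\,t^j/j!$, check convergence and the growth bound as in \eqref{gro}, and read off analyticity at $t=0$ directly from the series (the paper additionally identifies $u^*$ with the unique polynomial-growth solution via Theorem \ref{theo01}(b) and glues it with the backward solution from Corollary \ref{cor1} before invoking Theorem \ref{theo01}; your shortcut is acceptable for the existence statement, modulo the same term-by-term application of $\mathrm{L}_\alpha^{\kappa}$ that the paper also treats briefly).

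The necessity direction, however, has a genuine gap, and neither of your proposed repairs closes it. The Cauchy-estimate step needs a bound for $u(\cdot,x)$ on a complex disc of radius comparable to $\delta$, while the hypothesis only controls $u$ for real $t$. Your first fix --- reading ``time analytic with radius independent of $x$'' as the uniform derivative bounds $|\partial_t^k u(t,x)|\le C^{k+1}k^k(1+|x|^{\alpha-\epsilon}+t^{-\epsilon/\alpha})$ on $(0,\delta)$ --- is not a reading of the hypothesis but essentially the conclusion itself: what Theorem \ref{theo01} actually provides for a forward solution is \eqref{goal1}, whose right-hand side carries the factor $t^{-(k-1)}$ (indeed $t^{-k}$ after the parenthesis), so it blows up as $t\to 0^+$ and yields nothing at the initial time; bounds that stay uniform down to $t=0$ are exactly what \eqref{ifonly4} encodes, so assuming them begs the question (and even granting them, the $t^{-\epsilon/\alpha}$ term makes your $t\to 0^+$ limit argument vacuous). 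Your second fix --- maximum modulus on a smaller complex disc --- runs in the wrong direction: a bound on the real diameter of a disc does not control a holomorphic function on the disc (think of $\sin(Nz)$, bounded by $1$ on the reals but of size $e^{N}$ off the axis), so the polynomial growth on real times cannot be ``transferred off the real axis'' this way. The idea you are missing is the paper's: use the two-sided power-series expansion at $t=0$ (valid on $(-\delta,\delta)$ with radius independent of $x$) to extend $u$ to negative times, so that the extension solves the backward problem \eqref{back} and $t=0$ becomes a \emph{terminal} time of a solution defined on an earlier time interval; then the time-reversed solution falls under Corollary \ref{cor1} / Theorem \ref{theo01}(c), whose interior smoothing estimates at the terminal time give precisely $|(\mathrm{L}_\alpha^{\kappa})^k a(x)|\le A_1^{k+1}k^k(1+|x|^{\alpha-\epsilon})$. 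In short: endpoint derivative bounds at $t=0$ cannot be extracted from the forward interval alone; one must first push the solution to earlier times.
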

\begin{proof}
	On one hand, assuming \eqref{ifonly4}, we can see
	$$u^*(t,x)=\sum_{j=0}^{\infty} (\mathrm{L}_\alpha^{\kappa})^{j} a(x) \frac{t^{j}}{j !}$$
	is a smooth solution to \eqref{Cauchy} for $t \in [0,\delta)$ with $\delta$ sufficiently small.
 Moreover, if $\delta$ is sufficiently small, $u^*$ has polynomial growth of order $\alpha-\epsilon$ by \eqref{gro}, so $u^*$ is the unique solution to \eqref{Cauchy} by part (b) of Theorem \ref{theo01}. 	
	
	By Corollary \ref{cor1}, the backward problem \eqref{back} has a smooth solution $v=v(t,x)$
	in $[0, \delta) \times$ $\mathbb{R}^d$. Define the function $U=U(t,x)$ by
	$$
	U(t,x)=\left\{\begin{array}{ll}
	u^*(t,x), & t \in[0, \delta) \\
	v(-t, x), & t \in(-\delta, 0].
	\end{array}\right.
	$$
	It is straight forward to check that $U(t,x)$ is a solution of the nonlocal parabolic equations in $(-\delta,\delta)\times \mathbb{R}^d$. By Theorem \ref{theo01}, $U(t,x)$ and hence $u(t,x)$ is time analytic at $t=0$ for some $\delta>0$.

	On the other hand, suppose that $u=u(t,x)$ is a solution of the equation (\ref{Cauchy}), which is analytic in
	time at $t = 0$ with the radius of convergence independent of $x$. Then, by definition, $u$ has
	a power series expansion in a time interval $(-\delta, \delta)$, for some $\delta > 0$. Hence (\ref{ifonly4}) holds
	following the proof of Corollary \ref{cor1}.
\end{proof}
\begin{remark}
	Since we have not proved the solution to \eqref{PDE2} is unique, the proofs of the above two corollaries cannot be applied to the manifold case. Therefore, we just restrict the above two corollaries to the case of $\mathbb{R}^d$.
\end{remark}

For the following two corollaries, the operator $\mathrm{L}$ is either $\mathrm{L}_\alpha^{\kappa}$ on $\mathbb{R}^d$, or $\mathrm{L}^\alpha$ on $\mathrm{M}$. For convenience of notation, let $X$ be either $\mathbb{R}^d$ or $\mathrm{M}$ satisfying Conditions \eqref{cond1} and \eqref{cond2}.

Then similar to Theorems 1.4 and 1.5 in \cite{[Zeng]}, we have the following two corollaries.

\begin{corollary}
	Let $p$ be a positive integer and consider the equation
	\be\label{nonlinear2}
	u_t(t,x)-\mathrm{L} u(t,x)=u^p(t,x)\quad \text{in}\,\,(0,1]\times X
	\ee
	with the initial data $u(0,\cdot)$.
	Assume that $u=u(t,x)$ is a mild solution, i.e.,
	$$u(t,x)=\int_{X}p_\alpha(t,x;y)u(0,y)\,dy+\int_{0}^t \int_{X}p_\alpha(t-s,x;y)u^p(s,y)\,dyds$$ and there exists a constant $C_2$ such that
	$$|u(t,x)|\leq C_2,\ \forall (t,x)\in [0,1]\times X.$$
	Then $u$ is time analytic in  $t\in (0,1]$ and the radius of convergence is independent of x.
\end{corollary}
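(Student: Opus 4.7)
The plan is to show by induction on $k \geq 0$ that
\[
\sup_{x \in X}|\partial_t^k u(t, x)| \leq \frac{A^{k+1}\,k^k}{t^k}, \qquad t \in (0, 1],
\]
for a constant $A$ depending only on $p$, $C_2$, and the heat-kernel constants from Theorems \ref{theo01} and \ref{theo04}. Via Stirling this yields $|\partial_t^k u|/k! \leq A(eA/t)^k$, giving time analyticity on $(0, 1]$ with radius proportional to $t/(eA)$, independent of $x$. The base case $k = 0$ is the hypothesis $\|u\|_\infty \leq C_2$, so we take $A \geq C_2$.

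For the inductive step at a fixed $t_* \in (0, 1]$, I would re-centre the mild formulation at $\tau = t_*/2$,
\[
u(t, x) = \int_X p_\alpha(t - \tau, x; y)\,u(\tau, y)\,dy + \int_\tau^t\!\int_X p_\alpha(t - s, x; y)\,u^p(s, y)\,dy\,ds,
\]
valid for $t \in [\tau, 1]$, and differentiate $k$ times in $t$. The linear piece at $t = t_*$ is bounded by $\|u(\tau, \cdot)\|_\infty \int|\partial_t^k p_\alpha(t_*/2, x; y)|\,dy$, which by \eqref{goal}, \eqref{heat3}, and $\int p_\alpha\,dy = 1$ is at most $(2\tilde{C})^{k+1} C_2\,k^k/t_*^k$. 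For the Duhamel piece, the substitution $\sigma = t - s$ yields the Leibniz-type identity
\[
\partial_t^k\!\!\int_\tau^{t}\!\!\!\int p_\alpha(t-s,x;y)\,u^p(s,y)\,dy\,ds = \sum_{j=1}^{k}\binom{k}{j}\partial_\sigma^{j-1}\partial_t^{k-j}\Phi(t-\tau, t) + \int_0^{t-\tau}\!\partial_t^k\Phi(\sigma, t)\,d\sigma,
\]
with $\Phi(\sigma, t) = \int p_\alpha(\sigma, x; y)\,u^p(t-\sigma, y)\,dy$. Expanding each boundary term produces integrals of $\partial_\sigma^a p_\alpha(t_*/2, x; y)$ against $(\partial_t^{k-1-a}u^p)(\tau, y)$; via the multinomial identity $\partial_t^m(u^p) = \sum_{j_1+\cdots+j_p=m}\tfrac{m!}{j_1!\cdots j_p!}\prod_i\partial_t^{j_i}u$ with $j_i \leq k-1$, these are controlled by the inductive hypothesis and the kernel estimates.

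The interior integrand splits as $\partial_s^k(u^p) = pu^{p-1}\partial_s^k u + R_k$, where $R_k$ involves only $\partial_s^{j_i}u$ with $j_i \leq k - 1$ and is again bounded inductively after integration against $p_\alpha$ (using $\int p_\alpha\,dy = 1$). The main obstacle is the self-referential term
\[
p\int_\tau^{t_*}\!\!\int p_\alpha(t_*-s, x; y)\,u^{p-1}(s, y)\,\partial_s^k u(s, y)\,dy\,ds,
\]
which contains the very quantity being estimated. Using $\|u\|_\infty \leq C_2$ and $\int p_\alpha\,dy = 1$ it is at most $pC_2^{p-1}\int_\tau^{t_*}\|\partial_s^k u(s, \cdot)\|_\infty\,ds$, and here the naive choice $\tau = t_*/2$ produces the factor $\int_{t_*/2}^{t_*}s^{-k}\,ds \sim 2^{k-1}/((k-1)t_*^{k-1})$, which is not directly absorbable. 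The closure is achieved by performing a Gronwall-type bootstrap on the monotone quantity $N_k(t) := \sup_{s\in[t,1]}s^k\|\partial_s^k u(s, \cdot)\|_\infty$: after folding in the linear, boundary, and $R_k$ bounds, one obtains $N_k(t_*) \leq K^{k+1}k^k + \kappa_k N_k(t_*/2)$ with $\kappa_k = p C_2^{p-1}\cdot O(2^{k-1}/(k-1))$, and the refined choice $\tau = t_*(1 - 1/k)$ for $k \geq k_0 := \lceil 2pC_2^{p-1}e\rceil$ (with the finitely many cases $k < k_0$ absorbed into $A$) replaces $\kappa_k$ by a quantity strictly below $1$, closing the induction. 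This balancing between the singular $s^{-k}$ integral and the time-separation parameter $t_* - \tau$ is the delicate step and parallels the analogous passage in Theorem 1.4 of \cite{[Zeng]}.
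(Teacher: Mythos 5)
Your scheme does not close as written, and the failure is at the step you yourself flag as delicate. The decomposition is built on a single base point $\tau$, and that one $\tau$ has to serve two incompatible purposes. To make the self-referential Duhamel term contractive you take $\tau=t_*(1-1/k)$, so that $p\,C_2^{p-1}\int_\tau^{t_*}\|\partial_s^k u\|_\infty\,ds$ carries a coefficient below $1$; but then every term in which derivatives fall on the kernel is evaluated at time separation $t_*-\tau=t_*/k$, and by \eqref{goal} or \eqref{heat3} (equivalently \eqref{ex1}) the free-evolution piece alone is of size $C^{k+1}k^k\,(t_*/k)^{-k}\,C_2=C^{k+1}k^{2k}t_*^{-k}$, with the boundary terms $\partial_\sigma^a p_\alpha(t_*-\tau)$ behaving the same way in the worst case $a\approx k$. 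A bound growing like $k^{2k}\sim (k!)^2$ is not of the form $A^{k+1}k^k t_*^{-k}$ and destroys the analyticity conclusion, so the refined $\tau$ repairs the Gr\"onwall coefficient only by breaking the linear and boundary estimates; your computation of the linear piece with $\tau=t_*/2$ cannot be combined with the closure argument that uses a different $\tau$. There is a second, independent gap: even granting $N_k(t_*)\le K_k+\tfrac12 N_k(\tau)$, iterating pushes the base point to $t\to 0^+$, and the tail $2^{-m}N_k\bigl(t_*(1-1/k)^m\bigr)$ only vanishes if one already knows an a priori blow-up rate for $\|\partial_t^k u(t,\cdot)\|_\infty$ as $t\to0$ (indeed, even the existence of these derivatives for a merely bounded mild solution), which is essentially the statement being proved and is nowhere established.

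The paper avoids this dichotomy altogether. Its proof consists of deriving the $L^1$ kernel bounds \eqref{ex1}--\eqref{ex2} from \eqref{goal} or \eqref{heat3} and then invoking the argument of Theorem 1.4 in \cite{[Zeng]} (in the spirit of \cite{[DZ]}): one estimates the weighted derivatives $\partial_t^k(t^k u)$ rather than $\partial_t^k u$, and after the substitution $s=t\theta$ the Duhamel term has no moving boundary, the weighted Leibniz expansion distributes derivatives between $t^a p_\alpha(t(1-\theta),x;\cdot)$ (controlled uniformly in $\theta$ by \eqref{ex2}) and the nonlinearity at time $t\theta$, and the top-order self-referential contribution appears with an extra factor of $t$ times $p\,C_2^{p-1}$, so it is absorbed on a short time interval and then propagated. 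If you want to salvage your route you would have to reproduce exactly this mechanism; as proposed, the argument has a genuine gap.
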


\begin{proof}
	From  \eqref{goal} or \eqref{heat3}, we see by iteration that
	\be\label{ex1}\|\p_t^k p_\alpha(t,x,\cdot)\|_{L^1(X)}\leq C^{k+{1}/{2}}k^{k-2/3}t^{-k},\ \forall\  \text{integer}\ k\geq 0,\ee
	and thus, by the Leibniz rule, it holds that
	\be\label{ex2}\|\p_t^k(t^k p_\alpha(t,x,\cdot))\|_{L^1(X)}\leq C^{k+1}k^{k-2/3},\ \forall\  \text{integer}\ k\geq 0\ee
	for a sufficient large constant $C$.
	
	The rest of the proof is the same as that of Theorem 1.4 in \cite{[Zeng]}.
\end{proof}

\begin{corollary}
	For the equation (\ref{nonlinear2}) with $p$ being any positive rational number, assume that
	$u=u(t,x)$ is a mild solution and there exist constants $C_1$ and $C_2$ such that
	$$0\leq C_1\leq |u(t,x)|\leq C_2, \ \forall (t,x)\in [0,1]\times X.$$
	Then $u$ is time analytic in  $t\in (0,1]$ and the radius of convergence is independent of x.
\end{corollary}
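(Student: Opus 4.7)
The plan is to follow the proof of the preceding integer-$p$ corollary almost verbatim, with one crucial modification: replace the Leibniz rule for $\p_t^k(u^p)$ by a composition-of-analytic-functions bound valid for any rational $p$. The new input that makes this possible is the hypothesis $C_1 \leq |u| \leq C_2$, which guarantees that $g(z) := z^p$ is real analytic in a complex neighborhood of the interval $[C_1, C_2]$ traversed by $u$. The heat-kernel derivative bounds \eqref{ex1}--\eqref{ex2} continue to apply unchanged, since they do not depend on $p$.

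First, I would use Cauchy's estimates on a disk of radius $r_0 = r_0(C_1, C_2, p) > 0$ around each point of $[C_1, C_2]$ on which $g$ extends holomorphically to obtain
$$
|g^{(j)}(z)| \leq A_p \, j!/r_0^j \qquad \text{for all } z \in [C_1, C_2] \text{ and } j \geq 0,
$$
with $A_p, r_0$ depending only on $C_1, C_2, p$. Next, I would run the same induction on $k$ as in the integer-$p$ case to establish a Gevrey-$1$ bound of the form $\sup_{x \in X} |\p_t^k u(t,x)| \leq B^{k+1} k^k\, t^{1-k}$ on $(0,1]$. The bases $k = 0, 1$ follow from the a priori boundedness of $u$ together with a direct estimate of the Duhamel formula, and in the inductive step I differentiate
$$
u(t,x) = \int_X p_\alpha(t,x;y)\, u(0,y)\,dy + \int_0^t \int_X p_\alpha(t-s,x;y)\, u^p(s,y)\,dyds
$$
in $t$, transfer the derivatives onto the heat kernel via \eqref{ex1}--\eqref{ex2}, and bound $\p_s^j(u^p(s,y))$ for $j < k$ by applying Lemma \ref{comp} pointwise in $(s,y)$ with outer function $g$ and inner function $s \mapsto u(s, y)$. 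The outer derivatives are furnished by the Cauchy bound above, and the inner derivatives by the inductive hypothesis.

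The hard part will be to keep the constants independent of $k$ as one iterates Lemma \ref{comp} through the induction. In the integer case, the polynomial structure of $u^p$ keeps the combinatorics trivial, but for rational $p$ each invocation of Lemma \ref{comp} inflates the radius factor by $(1 + C_1/S)^k$, so I would need to choose the reference radius $S = r_0$ large relative to the inductive radius $R \sim t/B$, absorbing the inflation into the constant $B$. The remaining step, the incomplete Beta integral in the Duhamel time variable, proceeds exactly as in the integer case and contributes the factor $k^k t^{1-k}$ after an optimized splitting of the $k$ derivatives between $p_\alpha(t-s,x;y)$ and $u^p(s,y)$. Once the induction closes, Taylor expansion at each $t_0 \in (0,1]$ yields time analyticity with $x$-independent radius of convergence, as in the preceding corollary.
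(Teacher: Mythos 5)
Your kernel-estimate input (\eqref{ex1}--\eqref{ex2} and the Duhamel/Taylor framework) matches the paper, but the core of your plan --- handling $u^p$ for non-integer $p$ by applying Lemma \ref{comp} pointwise inside the induction and ``absorbing the inflation into the constant $B$'' --- is exactly the route the paper rules out in the remark immediately following this corollary, and the obstruction is real. In Lemma \ref{comp} the loss factor is $(1+C_1/S)^k$, where $C_1$ is the amplitude constant of the \emph{inner} function $u$ (so $C_1\geq \sup|u|\geq \min|u|$) and $S$ is the Cauchy radius of the \emph{outer} function $z\mapsto z^p$ on the range of $u$, which is capped by the distance to the branch point at $z=0$, i.e. $S\leq r_0<\min|u|$. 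Hence $1+C_1/S\geq 2$ is a fixed constant, independent of $t$, of $B$, and of how you split derivatives; your suggestion to ``choose $S$ large relative to $R\sim t/B$'' cannot be implemented, since $S$ is not at your disposal and shrinking $R$ (enlarging $B$) does not reduce the factor. The consequence is fatal to the induction: if $\sup_x|t^n\p_t^n u|\leq B^{n}n!$ for $n\leq k$, the composition bound only yields $|t^k\p_t^k u^p|\lesssim \bigl(B(1+1/\min|u|)\bigr)^{k}k!$, so when this is fed back through the Duhamel term the base constant produced for $u$ at order $k$ is at least a fixed multiple $(>1)$ of the base you assumed, for \emph{every} $B$; the induction never closes and no positive radius of convergence is obtained. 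This is precisely the computation recorded in the paper's remark.

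The paper's actual proof is different in the one place that matters: after establishing \eqref{ex1} and \eqref{ex2}, it follows Theorem 1.5 of \cite{[Zeng]}, which uses the rationality of $p=q_1/q_2$ through the bridge function $v=u^{1/q_2}$. Then $u=v^{q_2}$ and $u^p=v^{q_1}$ are \emph{integer} powers of $v$, so all relations between $\p_t^n(t^nu)$, $\p_t^n(t^nv)$, and $\p_t^n(t^nu^p)$ are Leibniz/product identities; the top-order term in $\p_t^n(t^n v^{q_2})=\p_t^n(t^nu)$ is solved for by dividing by $v^{q_2-1}$, which is where the lower bound $|u|\geq C_1>0$ enters. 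This purely algebraic bookkeeping preserves the base constant (no $(1+c)^k$ loss), which is why the induction closes there and not in the composition-lemma approach. To repair your argument you would need to replace the Lemma \ref{comp} step by this bridge construction (or some other device that exploits $p\in\mathbb{Q}$), not merely tune the constants.
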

\begin{proof}
 We also have \eqref{ex1} and \eqref{ex2}. Then the rest of the proof is the same as that of Theorem 1.5 in \cite{[Zeng]}.
\end{proof}

\begin{remark}
It is unclear to us whether a similar result holds when $p$ is an irrational number as we are unable to get an appropriate relation between $\p_t^n(t^n u)$ and $\p_t^n(t^n u^p)$, where $n$ is any positive integer. When $p=q_1/q_2$ is a rational number, in Lemma 4.5 of \cite{[Zeng]}, the author used $\p_t^n(t^n u^{1/q_2})$ as a bridge between $\p_t^n(t^n u)$ and $\p_t^n(t^n u^{q_1/q_2})$. Moreover, Lemma \ref{comp} cannot be used directly here. In fact, for any integer $k>0$, if we assume that
$$
|t^n\p_t^n u|\leq N^n n!\quad \forall \text{ positive integer}\ n\le k
$$
for a constant $N>0$, then by Lemma \ref{comp}, we  get
$$|t^k\p_t^k u^p|\leq N^{k+1/2} k!\left(1+\frac{1}{\min|u|}\right)^k,$$
which cannot be used to obtain a positive  radius of convergence.
\end{remark}

\subsection*{Acknowledgement}
H. Dong is partially supported by the NSF under agreement DMS-2055244,
the Simons Foundation, grant \# 709545, and a Simons fellowship. Q.S. Zhang wishes to thank the Simons foundation for its grant 710364.

\end{document}